\documentclass[reqno]{amsart}

\usepackage{amsmath,amsthm,amssymb,amscd}
\usepackage[arrow,matrix]{xy}
\usepackage[dvips]{graphicx}
\usepackage{enumerate}
\usepackage{hyperref}

\numberwithin{equation}{section}
\newtheorem{thm}[equation]{Theorem}

\newtheorem{cor}[equation]{Corollary}
\newtheorem{lem}[equation]{Lemma}
\theoremstyle{definition}
\newtheorem{dfn}[equation]{Definition}
\newtheorem{exm}[equation]{Example}

\newtheorem{rem}[equation]{Remark}

\def\pd{\partial}

\def\dim{\mathop{\mathrm{dim}}\nolimits}
\def\Lie{\mathop{\mathrm{Lie}}\nolimits}
\def\End{\mathop{\mathrm{End}}\nolimits}

\def\CC{\mathbb{C}}

\def\RR{\mathbb{R}}
\def\ZZ{\mathbb{Z}}

\def\r{\right}
\def\l{\left}

\def\Aut{\mathop{\mathrm{Aut}}\nolimits}

\def\Ad{\mathop{\mathrm{Ad}}\nolimits}
\def\ad{\mathop{\mathrm{ad}}\nolimits}

\def\gr{\mathop{\mathrm{gr}}\nolimits}
\def\Gr{\mathop{\mathrm{gr}}\nolimits}

\def\sl2{\mathfrak{sl}_2}
\def\min{\mathop{\mathrm{min}}\nolimits}
\def\max{\mathop{\mathrm{max}}\nolimits}

\def\half{\frac{1}{2}}
\def\nilp{\mathrm{nilp}}
\def\M{\mathcal M}

\def\half{\frac{1}{2}}

\title{Asymptotics of Degenerations of Mixed Hodge Structures}
\author[T.~Hayama and G.~Pearlstein]{Tatsuki Hayama and Gregory Pearlstein}
\thanks{}
\date{\today}
\address{Mathematical Sciences Center, Tsinghua University, Haidian District,
Beijing 100084, China}
\email{tatsuki@math.tsinghua.edu.cn}
\address{Department of Mathematics, Texas A\& M University, College Station,
TX}
\email{gpearl@math.tamu.edu}
\subjclass{} 
\keywords{}

\begin{document}
\begin{abstract}We construct a hermitian metric on the classifying spaces of
graded-polarized mixed Hodge structures and prove analogs of the strong
distance estimate~\cite{CKS} between an admissible period map and the
approximating nilpotent orbit.  We also consider the asymptotic behavior
of the biextension metric introduced by Hain~\cite{H2}, analogs of the
norm estimates of~\cite{KNU} and the asymptotics of the naive limit Hodge
filtration considered in~\cite{KP2}.
\end{abstract}

\maketitle

\section{Introduction}  Let $X$ and $S$ be smooth complex algebraic 
varieties, and $f:X\to S$ be a morphism such that $f$ is smooth, proper and 
connected.  Then by ~\cite{G}, a choice of projective embedding of $X$ determines 
a polarized variation of Hodge structure $\mathcal H\to S$ of weight $k$ via 
the $k$'th cohomology groups of the fibers.  Parallel translation of the data 
of $\mathcal H$ to a fixed reference fiber $H$ using the Gauss--Manin 
connection then determines an associated period map
\begin{equation}
       \varphi:S\to\Gamma\backslash\mathcal D  \label{eq:period-map}
\end{equation}
where $\Gamma$ is monodromy group and $\mathcal D$ is a classifying space
of pure Hodge structures which are polarized by a non-degenerate bilinear
form $Q$ on $H_{\ZZ}$ of parity $(-1)^k$.  

\par In the case where $S$ is a curve with smooth compactification $\bar S$,
the period integrals which define \eqref{eq:period-map} have at worst 
logarithmic singularities at the punctures $p\in\bar S - S$.  More generally, 
let $\bar S$ be a smooth completion of $S$ such that $\bar S - S$ has only 
normal crossing singularities.  Then, the local monodromy of $\mathcal H$ 
about any singular point $p\in\bar S - S$ is quasi-unipotent.  By passage to 
a finite cover, we can then assume that the local monodromy at $p$ is 
unipotent.  The analysis of the singularities of the period map can therefore 
be reduced to the following case:  Let $\Delta^r\subset \bar{S}$ be a polydisk 
with local coordinates $(s_1,\dots,s_r)$ such that $S\cap\Delta^r$ is the locus
of points where $s_1\cdots s_r\neq 0$.  Then, we can construct a
commutative diagram
\begin{equation}
\begin{CD}
            U^r @> F >>            \mathcal D   \\
            @V s_j = e^{2\pi iz_j} VV    @VVV  \\
            \Delta^{*r} @> \varphi >> \Gamma\backslash\mathcal D
\end{CD}                                           \label{eq:basic-diagram}
\end{equation}
where $U^r$ is the product of upper half-planes with coordinates $z_j$,
and
$$
       F(z_1\ldots ,z_j+1,\ldots ,z_n) = T_j.F(z)
$$
where $T_j = e^{N_j}$ is the monodromy of $\mathcal H$ about $s_j = 0$.

\par The group $G_{\mathbb R} = \text{Aut}_{\mathbb R}(Q)$ acts transitively
on the classifying space $\mathcal D$.  The compact dual $\check{\mathcal D}$
is the orbit inside a suitable flag variety of any point in $\mathcal D$ 
under the action of $G_{\mathbb C} = \text{Aut}_{\mathbb C}(Q)$.  The monodromy
transformations $T_j$ belong to $G_{\mathbb R}$, and hence
$$
       e^{N(z)} = e^{\sum_j\, z_j N_j}\in G_{\mathbb C}
$$
Accordingly, $z\mapsto e^{-N(z)}.F(z)$ is a holomorphic map from 
$U^r\to\check{\mathcal D}$ which is invariant under the deck transformations 
$(z_1,\dots,z_r)\mapsto (z_1,\dots,z_j+1,\dots,z_r)$, and hence descends
to a holomorphic map
\begin{equation}
       \psi:\Delta^{*r}\to\check{\mathcal D}     \label{eq:twisted-period-map}
\end{equation}

\begin{thm}\label{thm:schmid-nilp}(Nilpotent Orbit Theorem)~\cite{S} The map
$\psi$ extends to a holomorphic map $\Delta^r\to\check{\mathcal D}$.  Let
$F_{\infty} = \psi(0)$ denote the limiting Hodge flag.  Then,
\begin{itemize}
\item[(a)] Each $N_j$ is horizontal with respect to $F_{\infty}$, i.e.
$N_j(F_{\infty}^p)\subseteq F_{\infty}^{p-1}$ for all $p$;
\item[(b)] There exists a constant $\alpha\geq 0$ such 
$$
    \text{Im}(z_1),\dots,\text{Im}(z_r)>\alpha\implies
    \theta(z) = e^{N(z)}.F_{\infty}\in\mathcal D
$$
\item[(c)] There exists constants $\beta$ and $K$ such that if
$\text{Im}(z_1),\dots,\text{Im}(z_r)>\alpha$ then
$$
      d(F(z),\theta(z))\leq 
       K(\Pi_{j=1}^r\, \text{Im}(z_j))^{\beta}
        \sum_{j=1}^r e^{-2\pi\text{Im}(z_j)}
$$
for any $G_{\mathbb R}$-invariant metric on $\mathcal D$.
\end{itemize}
\end{thm}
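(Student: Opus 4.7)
My plan is to follow Schmid's original strategy. The crux is to show that $\psi$ extends holomorphically across the origin; once this is in hand, parts (a), (b), (c) follow by relatively formal manipulations. The extension step is where the genuine Hodge-theoretic input enters, and it is the main obstacle.

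To extend $\psi$, I would first show that $\psi(s)$ stays in a bounded region of $\check{\mathcal D}$ as $s\to 0$, then invoke Riemann's removable singularity theorem in a local chart on $\check{\mathcal D}$. Boundedness rests on the fact that the horizontal tangent bundle of $\mathcal D$ carries a $G_{\RR}$-invariant hermitian metric whose holomorphic sectional curvatures in horizontal directions are bounded above by a negative constant. Combined with a distance-decreasing (Ahlfors--Schwarz) argument applied to the horizontal map $F:U^r\to\mathcal D$, this bounds the stretching of $F$ relative to the Poincar\'e metric on $U^r$. Feeding in the monodromy identity $F(z+e_j)=T_jF(z)$ converts this into a uniform bound on the twisted map $e^{-N(z)}F(z)=\psi(s)$ as $\Im(z_j)\to\infty$, which is exactly what is needed.

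For part (a), I would differentiate the identity $F(z)=e^{N(z)}\psi(e^{2\pi i z})$ to obtain
$$
\frac{\partial F}{\partial z_j}=e^{N(z)}\Bigl(N_j\,\psi(s)+2\pi i\,s_j\,\frac{\partial\psi}{\partial s_j}(s)\Bigr).
$$
Horizontality of $F$ means the right-hand side sends $\psi^p(s)$ into $\psi^{p-1}(s)$. Letting $\Im(z_j)\to\infty$ kills the second summand since $\psi$ is holomorphic at $0$, leaving $N_jF_\infty^p\subseteq F_\infty^{p-1}$.

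For (b), I would write $\psi(s)=e^{X(s)}F_\infty$ locally, with a holomorphic $X(s)$ taking values in a nilpotent complement to the isotropy algebra of $F_\infty$ and satisfying $X(0)=0$. Then $\theta(z)=e^{N(z)}F_\infty = \bigl(e^{N(z)}e^{-X(s)}e^{-N(z)}\bigr)\cdot F(z)$, so it suffices to check that the conjugate $e^{N(z)}e^{-X(s)}e^{-N(z)}$ preserves the open condition cutting $\mathcal D$ out of $\check{\mathcal D}$ at $F(z)$. As $\Im(z_j)\to\infty$, the entries of $X(s)$ decay exponentially while $\ad(N(z))$ acts polynomially, so openness of $\mathcal D\subset\check{\mathcal D}$ yields the desired cutoff $\alpha$. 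For (c), a Taylor expansion in a local chart on $\check{\mathcal D}$ centered at $F_\infty$ gives $\psi(s)-F_\infty=O\bigl(\sum_j|s_j|\bigr)=O\bigl(\sum_j e^{-2\pi\Im(z_j)}\bigr)$; pulling this back through $e^{N(z)}$ in order to compare $F(z)$ and $\theta(z)$ in a $G_{\RR}$-invariant metric introduces stretching of order at most a polynomial in $\Im(z_j)$, controlled by the nilpotency index of $N(z)$, which produces the factor $(\prod_j\Im(z_j))^{\beta}$ and completes the estimate.
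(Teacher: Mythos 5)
First, note that the paper does not prove this theorem: it is quoted from Schmid~\cite{S} as background, so the only meaningful comparison is with Schmid's original argument, which is indeed what you are sketching. At the architectural level your outline is faithful: extension of $\psi$ via negative horizontal curvature and an Ahlfors--Schwarz (distance-decreasing) argument followed by removable singularities; part (a) by differentiating $F(z)=e^{N(z)}\psi(s)$ and setting $s=0$ (essentially the computation the paper reuses in Lemma~\ref{lem:CK2.6}); parts (b) and (c) by writing $\theta(z)=g(z).F(z)$ with $g(z)=e^{N(z)}e^{-X(s)}e^{-N(z)}$ tending to the identity exponentially fast, after reducing $\Re(z)$ to a compact set by $G_{\RR}$-equivariance.

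Two steps, as written, have genuine gaps. (i) In the extension step, ``$\psi(s)$ stays in a bounded region of $\check{\mathcal D}$'' is vacuous: $\check{\mathcal D}$ is a compact flag manifold, and removable-singularity statements fail for holomorphic maps into compact targets (e.g.\ $s\mapsto e^{1/s}$ into $\PP^1$). What must actually be shown is that near $s=0$ the image of $\psi$ lies in a single affine chart of $\check{\mathcal D}$ and that its chart coordinates are bounded; this requires converting the logarithmic drift $d(F(z),F(z_0))=O(\log\prod_j\Im(z_j))$ furnished by the distance-decreasing property into a polynomial bound on the unipotent coordinate of $e^{-N(z)}.F(z)$, whence a bound $O(|s|^{-\epsilon})$ for every $\epsilon>0$ and hence boundedness. (ii) In (b) and (c), ``$g(z)\to 1$ plus openness of $\mathcal D$'' does not close the argument, because $F(z)$ itself approaches $\partial\mathcal D$, so the radius of the neighborhood of $F(z)$ contained in $\mathcal D$ --- equivalently, the comparison constant between the $G_{\RR}$-invariant metric at $F(z)$ and a fixed norm on $\mathfrak{g}_{\CC}$ --- degenerates. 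The factor $(\prod_j\Im(z_j))^{\beta}$ in (c) comes not only from the nilpotency of $\ad N(z)$, as you assert, but also from this metric degeneration, which is again controlled by the logarithmic distance bound (the Hodge norm at $F(z)$ of a fixed element of $\mathfrak{g}_{\CC}$ grows at most like $e^{C\,d(F(z),F_0)}$, hence polynomially in $\prod_j\Im(z_j)$). Without making this uniformity explicit, neither the cutoff $\alpha$ in (b) nor the estimate in (c) follows.
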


\par The basic defect of this distance estimate is that it
fails to establish convergence of the period map and the nilpotent
orbit when the imaginary parts of $z_1,\dots,z_r$ diverge at very
different rates. In~\cite{CKS}, the authors present an argument by
Deligne which shows that there exist constants $\beta_1,\dots,\beta_r$ and 
$K$ such that if $\text{Im}(z_1),\dots,\text{Im}(z_r)>\alpha$ then
$$
      d(F(z),\theta(z))\leq 
       K\sum_{j=1}^r \text{Im}(z_j)^{\beta_j}e^{-2\pi\text{Im}(z_j)}
$$
for any $G_{\mathbb R}$-invariant metric on $\mathcal D$.  Accordingly, the 
period map and nilpotent orbit converge as
$\text{Im}(z_1),\dots,\text{Im}(z_r)\to\infty$.

\par In particular, by~\cite{CKS}, variations of pure Hodge structure
degenerate to variations of mixed Hodge structure along the boundary
strata, and hence one is naturally led to consider the theory of 
nilpotent orbits for degenerations of mixed Hodge structure.  In analogy
with the pure case~\cite{U}, a variation of mixed Hodge structure 
$\mathcal V\to S$ gives rise to a period map
$$
       \varphi:S\to\Gamma\backslash\mathcal M
$$ 
where $\Gamma$ is the monodromy group of the underlying local system of 
$\mathcal V$ acting on a fixed reference fiber $V$ of $\mathcal V$ and
$\mathcal M$ is a classifying space of mixed Hodge structures on $V$
with a given weight filtration $W$, Hodge numbers and graded-polarizations.
In analogy with the pure case, a mixed period map gives rise to a 
commutative diagram
\begin{equation}
\begin{CD}
            U^r @> F >>            \mathcal M   \\
            @V s_j = e^{2\pi iz_j} VV    @VVV  \\
            \Delta^{*r} @> \varphi >> \Gamma\backslash\mathcal M
\end{CD}                                           \label{eq:mixed-diagram}
\end{equation}
where $\mathcal M$ is an open subset of a homogeneous space 
$\check{\mathcal M}$ upon which a complex Lie group $G_{\CC}$ acts transitively.

\par To obtain an analog of Schmid's distance estimate in the mixed case,
one must first endow $\mathcal M$ with a hermitian structure.  In section 2
of this paper, we describe two different hermitian structures on 
$\mathcal M$ which we call the standard metric and the twisted metric.  In 
section 3, we prove the analog of Schmid's distance estimate $(c)$ for period 
maps of admissible variations of mixed Hodge structure and their associated 
nilpotent orbits with respect to the standard metric.  

\par The fact $\mathcal M$  is no longer the homogeneous space of a semisimple 
Lie group in the mixed case however introduces a distortion factor which seems 
to prevent one from obtaining an analogue of Deligne's stronger distance estimate with respect 
to the standard metric.  The twisted metric contains 
additional factors designed to compensate for this distortion at the cost of 
reducing the symmetry of the metric. In section 4, we prove Deligne's strong 
estimate for the twisted metric.  We also prove Deligne's strong estimate
for unipotent variations of mixed Hodge structure arising in the work
of Hain and Morgan on the mixed Hodge theory of the fundamental group
of a smooth complex algebraic variety.  

\par More precisely, we recall that as a consequence of the $SL_2$-orbit
theorem given any mixed Hodge structure $(F,W)$ there exists an associated
mixed Hodge structure 
\begin{equation}
              (\hat F,W) = (e^{-\epsilon(F,W)}.F,W)         \label{eq:canonical-splitting}
\end{equation}
which is split over $\mathbb R$, and $\epsilon$ is given by certain
universal Lie polynomials \cite[Lemma $(6.60)$]{CKS} in the Hodge components 
of Deligne's $\delta$-splitting \cite[Prop. $(2.20)$]{CKS}.  Given a point
$F\in\mathcal M$, we can therefore define 
\begin{equation}
    \tau(F) = 1 + \sum_{p,q<0}\, \|\epsilon^{p,q}\|^{-\frac{2}{p+q}}  
    \label{eq:sl2-twist}
\end{equation}
where $\sum_{p,q<0}\, \epsilon^{p,q}$ is the decomposition 
(see \eqref{eq:bigrading}) of $\epsilon$ into Hodge components with respect to 
$(F,W)$ and $\|*\|$ is the standard metric with respect to $(F,W)$.  The 
twisted metric is then obtained by rescaling the standard norm of an element 
of Hodge type $(p,q)$ by $\tau^{\frac{p+q}{2}}$.  The resulting metric remains 
invariant under $G_{\mathbb R}$,  but has lower symmetry than the standard metric
(cf. Lemma~\eqref{lem:isometries} and \eqref{lem:twisted-sym}).

\begin{rem} The construction of standard metric appears in~\cite{K}.  
In \S 12 of~\cite{KNU}, the authors consider the twisted metric
attached to the function $y_1$ along a period map, but do not appear to
consider the case where $\tau$ is a function on the classifying space
itself (cf. \S 12.9 of~\cite{KNU}).  In \S 4 of~\cite{KNU2} the authors
consider an analog of the twisted metric on the space of $SL_2$-orbits,
see the discussion of norm estimates at the end of this section for
more details.
\end{rem}

\begin{rem} In the case where the classifying space $\mathcal M$ parametrizes 
mixed Hodge structures with exactly two weight graded quotients which are
adjacent, the group $G_{\mathbb R}$ acts transitively on $\mathcal M$ by 
isometries.  However, a simple transcription of the argument to prove
the strong distance estimate given in \cite[\S 1]{CKS} appears to fail
because the group $G_{\mathbb R}$ is no longer semisimple.  Moreover,
the curvature and other geometric properties of $\mathcal M$ differ
from the pure case~\cite{PP}.
\end{rem}

\begin{rem}\label{rem:splitting} For future reference we record the following 
property of Deligne's $\delta$-splitting: If $g\in GL(V_{\mathbb R})$ preserves
$W$ then $(g.F,W)$ is a mixed Hodge structure (not necessarily
graded-polarized) and $\delta(g.F,W) = \text{Ad}(g)\delta(F,W)$.
Since $\epsilon$ is given by universal Lie polynomials in
the Hodge components of $\delta$, the same formula holds for 
$\epsilon$ as well.
\end{rem}

\par The key technical step in proving these results is a relative
compactness result for period maps of degenerations of mixed Hodge
structure.  To state the result, we recall that Schmid's $SL_2$-orbit
theorem attaches to each nilpotent orbit of pure Hodge structure
$e^{zN}.F$ an $sl_2$-pair $(N,H)$ such that $H$ acts via a real
morphism of type $(0,0)$ on an associated limit mixed Hodge structure
of the nilpotent orbit (see \S 3, \cite{CK}).  More generally~\cite{CKS,CK}, 
given a nilpotent orbit of pure Hodge structure 
$$
         e^{z_1 N_1 + \cdots +z_r N_r}.F
$$
the several variable $SL_2$-orbit theorem gives a commuting family of
representations of $sl_2(\mathbb R)$ with semisimple elements
$$
         H_1,\dots,H_r\in\mathfrak g_{\mathbb R}. 
$$

\par Variations of mixed Hodge structure of geometric origin satisfy
a set admissibility conditions~\cite{SZ} which ensure that if $\mathcal V$ is 
an admissible variation of mixed Hodge structure with weight filtration $W$ 
and unipotent local monodromy transformations $T_j = e^{N_j}$ then the 
relative weight filtration of $W$ with respect to $N_j$ exists for each $j$.
Via the diagram~\eqref{eq:mixed-diagram}, an admissible variation of mixed
Hodge structure over $\Delta^{*r}$ determines an admissible nilpotent orbit
\begin{equation}
         (e^{z_1 N_1 + \cdots +z_r N_r}.F,W)               \label{eq:admissible-nilp}
\end{equation} 
in analogy with Schmid's construction.  In particular, the nilpotent
orbit~\eqref{eq:admissible-nilp} is a nilpotent orbit of pure Hodge structure on 
each graded quotient of $W$, and hence determines a corresponding system of 
semisimple elements $H_j$ acting on $Gr^W$.  

\par A choice of isomorphism ({\it grading}) from $Gr^W$ to the reference 
fiber $V$ can be viewed as a choice of direct sum decomposition 
$$
         V = \bigoplus_j\, V_j
$$
such that $W_k = W_{k-1}\oplus V_k$ for each index $k$.  

\par A construction of Deligne presented in~\cite{Sch} associates to an
admissible nilpotent orbit~\eqref{eq:admissible-nilp} a functorial
grading $Gr^W\cong V$.  Let $H_1,\dots,H_r$ be the corresponding
semisimple endomorphisms of $V$, and let $Y_0$ be the endomorphism
of $V$ which acts as multiplication by $j$ on $V_j$.  Let
\begin{equation}
         t(y) = y_1^{-Y_0/2}\Pi_{j=1}^r y_j^{-H_j/2}       \label{eq:t-y}
\end{equation}

\begin{rem} The construction of $t(y)$ using Deligne systems as described
above appears in~\cite{BP1}.  A different construction of $t(y)$ appears in
the introduction to~\cite{KNU} where it is constructed using the work 
of~\cite{CKS} and the limit of a grading of the nilpotent orbit. In 
the pure case~\cite{CK}, the factor $y_1^{-Y_0/2}$ acts trivially on the 
classifying space, and can be omitted.
We denote by $t^{-1}$ the function given by $y\mapsto t(y)^{-1}$.
\end{rem}

\begin{thm}\label{thm:rel-compact} (\S 7~\cite{BP1}) Let 
$\mathcal V\to\Delta^{*r}$ be an admissible variation of mixed Hodge structure 
with unipotent monodromy with associated period map  $F:U^r\to\mathcal M$ as 
in~\eqref{eq:mixed-diagram} and nilpotent orbit~\eqref{eq:admissible-nilp}.  
Let $t(y)$ be the associated family of automorphisms~\eqref{eq:t-y} and 
$$
     I' = \{ (x_1 + i y_1,\dots,x_r + iy_r)\in U^r \mid 
             y_1\geq y_2\geq\cdots\geq y_r\geq 1\}
$$
Then, the image of $I'$ under the map
$$
       z\in U^r\mapsto t^{-1}(y)e^{-\sum_j\, x_j N_j}.F(z)
$$
is a relatively compact subset of $\mathcal M$.
\end{thm}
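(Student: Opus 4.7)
The plan is to reduce the assertion to the case of the bare nilpotent orbit and then absorb the higher-order corrections via a conjugation argument. First, applying the mixed nilpotent orbit theorem for admissible variations, I would write
\[
F(z) \;=\; e^{\sum_j z_j N_j}\,\hat{u}(s)\,F_\infty,
\]
where $\hat{u}:\Delta^r \to G_\CC$ is a local holomorphic section, valued in a suitable opposite unipotent subgroup relative to the isotropy of $F_\infty$, with $\hat{u}(0)=I$. On $I'$ one has $|s_j| \leq e^{-2\pi}$, so $\hat{u}(s)$ lies in a fixed compact set and tends to the identity as the $y_j \to \infty$. Since the $N_j$ commute, this yields
\[
t^{-1}(y)\,e^{-\sum_j x_j N_j}\,F(z) \;=\; \bigl[t^{-1}(y)\,e^{i\sum_j y_j N_j}\,\hat{u}(s)\,e^{-i\sum_j y_j N_j}\,t(y)\bigr] \cdot \bigl[t^{-1}(y)\,e^{i\sum_j y_j N_j}\,F_\infty\bigr],
\]
so the problem splits into bounding a factor in $\mathcal M$ and a factor in $G_\CC$.

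To control the second (right-hand) factor I would invoke the several-variable $SL_2$-orbit theorem for admissible nilpotent orbits, applied graded-piece by graded-piece and lifted to the mixed setting through Deligne's functorial grading (which furnishes the semisimple system $(Y_0, H_1,\ldots,H_r)$). This theorem provides a continuous extension in $\mathcal M$ of $t^{-1}(y)\,e^{i\sum_j y_j N_j}\,F_\infty$ to the boundary of the sector $y_1 \geq \cdots \geq y_r \geq 1$, so this factor already lies in a relatively compact subset of $\mathcal M$ over $I'$.

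It remains to show that the bracketed conjugation factor stays in a compact subset of $G_\CC$. Writing $\hat{u}(s) = \exp X(s)$ with $X(0)=0$, I would Taylor-expand $X(s) = \sum_{\mathbf n > 0} s^{\mathbf n}\,X_{\mathbf n}$ and decompose each $X_{\mathbf n}$ into joint eigenvectors of $\ad Y_0$ and the $\ad H_j$. Conjugation by $e^{i\sum_j y_j N_j}$ contributes polynomial growth in the $y_j$ from $\ad N_j$, conjugation by $t^{-1}(y)$ contributes monomial weights $y_1^{a_0/2}\prod_j y_j^{a_j/2}$ dictated by the $(Y_0,H_j)$-eigenvalues, and each factor $s^{\mathbf n}$ contributes exponential decay $\prod_j e^{-2\pi n_j y_j}$. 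The main obstacle is verifying that on $I'$ this exponential decay dominates all the monomial and polynomial growth uniformly. This is where admissibility enters decisively: the compatibility of the relative weight filtrations with $Y_0$ and with the $H_j$-eigenspaces forces each positive $(Y_0,H_j)$-weight appearing in $X_{\mathbf n}$ to be accompanied by a sufficiently large $s_j$-exponent in $\mathbf n$ to cancel the growth. Combining the two factors yields the relative compactness claimed.
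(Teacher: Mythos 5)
Your skeleton is the right one, and it is essentially the argument of \S 7 of \cite{BP1} that the paper cites (the paper does not reprove this theorem): pass to the local normal form \eqref{eq:lnf}, factor the twisted period map as $\bigl[\Ad(t^{-1}(y)e^{iN(y)})e^{\Gamma(s)}\bigr]$ applied to the twisted nilpotent orbit, and beat the weight factors with the decay of $s^{\mathbf n}$. The genuine gap is in the step you yourself flag as decisive. Admissibility places no constraint whatsoever on $\Gamma(s)$ beyond furnishing the limit mixed Hodge structure and hence the operators $Y_0,H_1,\dots,H_r$; for an arbitrary holomorphic $\Gamma$ with $\Gamma(0)=0$ the dangerous weight components of a Taylor coefficient $\Gamma_K$ with $\deg_k(K)=0$ need not vanish, and then $\Ad(t^{-1}(y))$ produces a factor growing in the ratio $y_k/y_{k+1}$ which no available exponential decay controls: on $I'$ that ratio can diverge while every $s_j$ with $\deg_j(K)>0$ stays on a fixed circle. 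The input that saves the day is not admissibility but Griffiths horizontality, which forces $[N_k,\Gamma|_{s_k=0}]=0$ (Lemma \ref{lem:CK2.6}); combined with $sl_2$-representation theory (a vector annihilated by the lowering operator is a lowest-weight vector, hence has weight of the correct sign) this yields the vanishing of exactly the bad components (Lemma \ref{lem:vanishing}, equation (7.32) of \cite{BP1}). Beware also that in the present paper Lemma \ref{lem:vanishing} is \emph{deduced from} Theorem \ref{thm:rel-compact}, so you cannot quote it here without circularity; you must extract it from horizontality, as in \cite{CK} and \cite{BP1}.

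Two further points. First, the several-variable $SL_2$-orbit theorem controls $t^{-1}(y)e^{iN(y)}.F_\infty$ only in the corner where all ratios $y_j/y_{j+1}\to\infty$; it does not by itself give a continuous extension over the whole sector $I'$, where some ratios may stay bounded and $y_r$ may remain equal to $1$. Covering all of $I'$ requires the regrouping/induction on the number of variables (compare the $sl_2$-sequence device used in \S 6.4 of this paper, or the partial orbits $F_j$ of \S 3). Second, showing that the conjugation factor merely stays in a compact subset of $G_{\CC}$, applied to a relatively compact subset of $\mathcal M$, only yields relative compactness in $\check{\mathcal M}$; since $\mathcal M$ is open in $\check{\mathcal M}$ you must additionally show that the closure avoids $\partial\mathcal M$, i.e.\ that the correction tends to the identity precisely in the directions of degeneration, which again forces the quantitative estimate of the previous paragraph rather than mere boundedness.
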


\begin{rem} In the pure case, this is Theorem $(4.7)$ of~\cite{CK}.  A special 
case of this result appears in \S 12 of~\cite{KNU}.  A proof of this result
is given in~\cite{BP1} where it plays a crucial role in the analysis of the
asymptotic behavior of normal functions.
\end{rem}

\section*{Applications}

\subsection*{Normal Functions} Let $X$ be a smooth complex projective variety 
of dimension $n$.  Then, for any pair of homologically trivial algebraic 
cycles $\alpha$ and $\beta$ on $X$ of dimension $a$ and $b$ on $X$ with 
disjoint support such that $a + b = n-1$, there is an associated
archimedean height
\begin{equation}
         \langle \alpha,\beta\rangle 
          = -\int_{\alpha}\, G_{\beta}  \label{eq:height-period}
\end{equation}
defined by integration of a suitable Green's current $G_{\beta}$ over $\alpha$.
Moreover, as discussed in section 1.1 of \cite{H}, the height can be
viewed as a period of a subquotient of the mixed Hodge structure on 
$H_{2a+1}(X-|\beta|,|\alpha|)\otimes\mathbb Z(-a)$ with weight graded quotients
$$
        Gr^W_0 = \mathbb Z(0),\qquad 
        Gr^W_{-1} = H_{2a+1}(X)\otimes\mathbb Z(-a),\qquad 
        Gr^W_{-2} = \mathbb Z(1)
$$
In particular~\cite{H}, as the triple $(X,\alpha,\beta)$ vary in a flat family
$(X_s,\alpha_s,\beta_s)$ the pairing \eqref{eq:height-period} corresponds to a 
period of a variation of mixed Hodge structure $\mathcal V\to S$ with weight 
graded quotients
\begin{equation}
         Gr^W_0 = \mathbb Z(0),\qquad
         Gr^W_{-1} = \mathcal H,\qquad
         Gr^W_{-2} = \mathbb Z(1)                   \label{eq:biext-data}
\end{equation} 
such that the extension class
$$
        0 \to \mathcal H \to W_0/W_{-2} \to \mathbb Z(0) \to 0 
$$
corresponds to the normal function $\nu_{\alpha}$ attached to the family of 
homologically trivial cycles $\alpha_s$ whereas the extension class
$$
        0 \to \mathbb Z(1) \to W_{-1} \to \mathcal H\to 0
$$
is dual to the normal function $\nu_{\beta}$ attached to the family $\beta_s$.  
Once these extension classes are fixed, there is a natural action of 
$\mathcal O^*$ on the set of possible variations of mixed Hodge structure with 
these extension classes.  Moreover, as explained in section 5, the resulting 
line bundle
$$
         \mathcal B\to S
$$
carries a natural hermitian metric $h$ (see~\cite{H2}).

\par Suppose now that $S$ is a Zariski open subset of a complex manifold 
$\bar S$ such that $\bar S - S$ is a normal crossing divisor along which the
monodromy is unipotent.  Then, it is natural to extend $\mathcal B$ to a 
line bundle $\bar{\mathcal B}\to\bar S$ by declaring the local extending
sections to be admissible variations of mixed Hodge structures with the
the given extension data of the admissible normal function $\nu_{\alpha}$
and $\nu_{\beta}$ (see~\cite{BP2} for details).  

\par In section 5, we will prove the following result:

\begin{thm}\label{thm:loc-int} Let $\mathcal V\to\Delta^{*r}$ be an
admissible variation of biextension type with unipotent monodromy
and 
$$
       |\mathcal V| = e^{-\varphi}
$$
(see \S 5 for the definition of $|\cdot |$).
Then, $\varphi\in L^1_{loc}(\Delta^r)$, and hence defines a singular
hermitian metric on $\bar{\mathcal B}$ (see~\cite{D}).
\end{thm}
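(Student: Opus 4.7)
The plan is to combine Theorem~\ref{thm:rel-compact} with an explicit description of $|\mathcal V|$ in terms of the period map, and then show that $\varphi$ has at most logarithmic growth along the normal crossing divisor $s_1\cdots s_r = 0$, which is a classical sufficient condition for $L^1_{loc}$ (cf.~\cite{D}).

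First, I would unwind the definition of $|\mathcal V|$ from section~5. For a variation of biextension type with graded quotients $\mathbb{Z}(0), \mathcal H, \mathbb{Z}(1)$, the biextension metric is obtained by pairing a canonically chosen lift of $1\in Gr^W_0$ into the Hodge filtration $F^0$ against a generator $e_{-2}$ of $\mathbb{Z}(1)\subseteq W_{-2}$. Consequently $|\mathcal V|(z) = h(F(z))$ for a real-analytic, $G_{\mathbb R}$-invariant function $h$ on the classifying space $\mathcal M$.

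Second, I would factor the period map using the nilpotent orbit theorem combined with Theorem~\ref{thm:rel-compact}: on a Weyl chamber $\{y_1 \geq \dots \geq y_r \geq 1\}$ we may write
\[
     F(z) = e^{\sum_j x_j N_j}\, t(y)\, v(z),
\]
where $v(z)$ ranges over a relatively compact subset $K\subseteq\mathcal M$ and $t(y)$ is the distinguished automorphism~\eqref{eq:t-y}. The $G_{\mathbb R}$-invariance of $h$ under real monodromy removes the $e^{\sum_j x_j N_j}$ factor.  I would then compute the effect of $t(y)$ on the distinguished vectors entering $h$: since the generator of $Gr^W_0$ has $Y_0$-weight $0$ and $e_{-2}$ has $Y_0$-weight $-2$, and each $H_j$ acts semisimply with integer weights on the biextension variables, the action of $y_1^{-Y_0/2}\prod_j y_j^{-H_j/2}$ produces only monomial rescalings $\prod_j y_j^{a_j}$.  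This gives the asymptotic expansion
\[
     \varphi(z) = \sum_{j=1}^r a_j \log y_j + R(v(z))
\]
on the chamber, where the constants $a_j$ are determined by the $sl_2$-weights of the distinguished vectors, and $R$ is a bounded function of $v(z)\in K$.

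Finally, since $\log y_j = -(2\pi)^{-1}\log|s_j|^2 + \mathrm{const}$ and $\log|s_j|$ is locally integrable on $\Delta^r$, the restriction of $\varphi$ to each chamber is $L^1_{loc}$. Running the same argument for the finitely many orderings of the $y_j$'s (using the symmetry of the setup) covers $\Delta^{*r}$ and yields $\varphi\in L^1_{loc}(\Delta^r)$. The main obstacle is the asymptotic computation in the third step: one must check that the contribution of $v(z)$ to $\log h(F(z))$ really is bounded and that the combined action of $t(y)$ and the monodromy piece yields purely logarithmic growth, with no hidden $y_j^{a}$-type singularities coming from the nilpotent part $e^{\sum_j x_j N_j}$.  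Here the biextension structure is essential: there are only three adjacent weight graded quotients with the outer two Tate, so the polynomial corrections in $x_j$ arising from $e^{N(z)}$ are forced to cancel once projected onto the pairing defining $h$, by admissibility and Remark~\ref{rem:splitting}.
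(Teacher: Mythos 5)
Your overall strategy is the same as the paper's (factor the period map as $e^{N(x)}t(y).v(z)$ via Theorem~\ref{thm:rel-compact}, kill the $e^{N(x)}$ factor by invariance, read off the growth from the $t(y)$ factor, and reduce to local integrability of $\log|s_1|$), but the key asymptotic computation --- the step you yourself flag as the main obstacle --- is stated in a form that is wrong, and the error traces back to not using the actual definition of the metric. The paper defines $|[F]| = e^{-2\pi\delta_{(F,W)}/\mu}$ as in \eqref{eq:metric}, where $\delta_{(F,W)}$ is Deligne's splitting, which in the biextension case lies in the one-dimensional central line $\CC\mu$; hence $\varphi = 2\pi\delta_{(F(z),W)}/\mu$ \emph{exactly}, with no remainder to estimate. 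The metric is the exponential of a period, not a period, and this changes the order of growth by an exponential. Since $\mu$ is central, the factors $y_j^{-H_j/2}$ of $t(y)$ act trivially on $\delta$, while $y_1^{-Y_0/2}$ scales it by exactly $y_1$ (as $\delta$ lowers $W$ by $2$); together with $\delta_{(e^{-N(x)}.F,W)}=\delta_{(F,W)}$ (Remark~\ref{rem:splitting} plus centrality of $\mu$) this gives the paper's identity $\varphi = -\log|s_1|\cdot\delta_{(t^{-1}(y)e^{-N(x)}.F(z),W)}/\mu$, i.e.\ $\varphi = y_1\cdot c(v(z))$ with $c$ bounded but \emph{non-constant}.

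Your claimed expansion $\varphi=\sum_j a_j\log y_j + R(v(z))$ with constant $a_j$ and bounded $R$ is not of this form: the true coefficient of $\log|s_1|$ is the variable bounded function $c(v(z))$, and the difference between the two expressions is unbounded. Moreover the identity you invoke, $\log y_j = -(2\pi)^{-1}\log|s_j|^2+\mathrm{const}$, is false --- it is $y_j$ itself that equals $-(2\pi)^{-1}\log|s_j|$ --- and if one corrects it, your expansion would predict only $\log\log(1/|s_j|)$ growth, which is not the actual asymptotic behavior of the biextension metric. The conclusion $\varphi\in L^1_{loc}$ happens to survive either reading, but as written the argument neither identifies $\varphi$ correctly nor establishes the boundedness claim it depends on; both are supplied in the paper by the exact formula \eqref{eq:phi-compact}, which requires starting from \eqref{eq:metric} and the characterization \eqref{eq:delta-eq} of $\delta$ rather than from a generic ``pairing of distinguished vectors.''
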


\par In particular, as explained in~\cite{BFNP}, a triple
$(X,L,\zeta)$ consisting of a smooth complex projective variety $X$
of dimension $2n$, a very ample line bundle $L\to X$ and a Hodge
class $\zeta\in H^{n,n}(X,\mathbb Z)$ which is primitive with
respect to $L$ determines a normal function
$$
      \nu_{\zeta} : S \to J(\mathcal H)
$$
where $S$ is the complement of the dual variety of $X$ in $|L|$
and $\mathcal H$ is the variation of Hodge structure of weight
$2n-1$ defined by the hyperplane sections of $X$ with respect 
to $S$.  Using $\nu_{\zeta}$ in place of the normal functions
$\nu_{\alpha}$ and $\nu_{\beta}$ considered above, we obtain a 
biextension line bundle $\mathcal B\to S$.  Moreover, as
the second author will show in joint work with Patrick Brosnan,
the failure of the biextension metric $h$ to extend to 
$\bar{\mathcal B}$ in this setting is equivalent to the 
existence of singularities of $\nu_{\zeta}$ of the type considered
by Griffiths and Green in their study of the Hodge conjecture.
\vskip 10pt

\subsection*{Norm Estimates} In \S 12 of~\cite{KNU}, the authors prove
analogs of the norm estimates of~\cite{CKS} for what is effectively
the twisted metric for $\tau(e^{\sum_j\, y_j N_j}.F)$ artificially set equal 
to $y_1$, i.e. their twisting factor does not arise from a function defined
globally on the classifying space $\mathcal M$.  On the other hand, by
the main theorem of~\cite{KNU} we have
\begin{equation}
    e^{\sum_j\, iy_j N_j}.F = t(y) {}^e g(y) e^{\epsilon(y)}.\text{\bf r}
    \label{eq:asymp-non-split}
\end{equation}
where ${}^e g(y)\in G_{\mathbb R}$ converges to $1$ as all $y_j/y_{j+1}\to\infty$,
$\text{\bf r}\in\mathcal M$ is split over $\mathbb R$ and $\epsilon(y)$ has
a finite limit as all $y_j/y_{j+1}\to\infty$.  If this limit is non-zero,
we say that the original period map/nilpotent orbit is {\it asymptotically
non-split}.  Equivalently, by~\cite{KNU}
\begin{equation}
      \lim_{y_j/y_{j+1}\to\infty}\, t^{-1}(y) e^{\sum_j\, z_j  N_j}.F
      \label{eq:asymp-2}
\end{equation}
is not split over $\mathbb R$.  By Corollary 12.8 in~\cite{KNU}, limit
\eqref{eq:asymp-2} has the same limit if replace $e^{\sum_j\, z_j N_j}.F$
by the period map $F(z)$. In this case, it follows directly from the 
arguments of \S 12 of~\cite{KNU} and equation \eqref{eq:basic-bound} that the 
same norm estimates hold for the twisted metric, which is defined on all of 
$\mathcal M$.

\par In \S 4 of~\cite{KNU2}, the authors put a metric on a space of 
$SL_2$-orbits and obtain the norm estimates described above for a
metric which appears to be quasi-isometric to the twisted metric
studied in this paper near boundary points which correspond to 
asymptotically non-split orbits.  In this respect, our results on
the distance between the period map and its nilpotent orbit can
be considered as a complement to the results of~\cite{KNU2}.  Globally
however, the metric constructed in~\cite{KNU2} does not seem to be 
globally quasi-isometric to the twisted metric studied in this paper.

\subsection*{Reduced Period Map} Another potential application of the results 
on relative compactness considered above concerns partial compactifications of 
period domains of pure Hodge structure.  More precisely, we recall 
(cf.~\cite{KU}) that the period domain quotients $\Gamma\backslash\mathcal D$ 
can be partially compactified with respect to horizontal maps by the addition 
of boundary components which parametrize nilpotent orbits with local monodromy 
contained in the faces of a fan $\Sigma$ which satisfies certain compatibility 
conditions relative to $\Gamma$.  In the classical case of Hodge structures
of weight 1, these partial compactifications correspond to
toroidal compactification of Ash, Mumford, Rapoport and Tai.  The object
which corresponds to the limit point in Satake compactification is the 
{\it reduced limit Hodge filtration}
\begin{equation}
        \lim_{Im(z)\to\infty}\, e^{zN}.F         \label{eq:naive-1}
\end{equation}
of a nilpotent orbit of weight $k$.  
We discuss the relationship between the reduced limit Hodge filtration and the 
Satake boundary component in \S \ref{Satake} for more detail.

\par By Lemma $(3.12)$ in~\cite{CKS}, this filtration always takes values in 
the topological boundary $\partial\mathcal D$ of $\mathcal D$ in the compact 
dual $\check{\mathcal D}$.  This boundary is again a union of 
$G_{\mathbb R}$-orbits, and the study of such boundary strata is potentially 
very useful in the study of certain infinite dimensional representations of 
$G_{\mathbb R}$ (see \S 5 of~\cite{KP2}).

\par More precisely, Cattani, Kaplan and Schmid show that if $(N,H,N^+)$ is
the $sl_2$-triple attached to the Deligne splitting 
$$ 
       (\hat F,W(N)[-k]) = (e^{-i\delta}.F,W(N)[-k])
$$
of the limit mixed Hodge structure attached to \eqref{eq:naive-1} then
\begin{equation}
    e^{zN}.F = e^{i\delta}e^{(1/z)N^+}.\Phi,\qquad
    \Phi^p = \bigoplus_{s\leq k-p}\, I^{r,s}_{(\hat F,W(N)[-k])}
    \label{eq:naive-2}
\end{equation}

\begin{rem} The filtration $\Phi$ occuring in \eqref{eq:naive-2} is unchanged by replacing $\hat F$ by 
$e^{-\epsilon}.F$: Both $\epsilon$ and $\delta$ belong to 
$$
           \Lambda^{-1,-1} = \oplus_{a,b<0}\, \mathfrak g^{a,b}_{(F,W(N)[-k])}
$$
which preserves $\Phi$ and acts equivariantly with respect to the Deligne bigrading, i.e. 
$I^{p,q}_{(e^{\lambda}.F,W)} = e^{\lambda}.I^{p,q}_{(F,W)}$.
\end{rem}

In \S 6, we consider the several variable analog of this question.  Namely,
given a nilpotent orbit generated by $(N_1,\dots,N_r;F)$
then for any $y_1,\dots,y_r>0$ the data $(N(y),F)$ generates a 1-variable
nilpotent orbit where $N(y) = \sum_j\, y_j N_j$.  Using the several 
variable $SL_2$-orbit theorem, we obtain an asymptotic formula for
$N^+(y)$ in the associated $sl_2$-triple. We close \S 6 with a discussion
of the reduced limit map for variations of mixed Hodge structure.

\section*{Acknowledgments} The authors would like to thank Patrick
Brosnan, Phillip Griffiths, Aroldo Kaplan, Zhiqin Lu, Chikara Nakayama, 
Chris Peters and Colleen Robles for helpful discussion regarding various 
aspects of this paper.  This research was partially supported by 
Research Fund for International Young Scientists NSFC 11350110209 (Hayama) and 
NSF grant DMS 1002625 (Pearlstein).  We also acknowledge generous support from 
the IHES, Institut de Math\'{e}matiques de Jussieu, National Taiwan University 
and the Fields Institute which facilitated this work.

\section*{Notations and conventions}
Let $V$ be a $\RR$-vector space and let $W=\{W_k\}_{k\in \ZZ}$ be an increasing 
filtration of $V$. We use the following notations:
\begin{itemize}
\item $\gr^W_k$ is the quotient space $W_k/W_{k+1}$ and $\gr^{W}(V)=\bigoplus_k \gr^W_k$;
\item $\End{(V)}^W$ (resp. $\Aut{(V)}^W$) is the group of endomorphisms (resp. 
automorphisms) which preserves $W$;
\item For $X\in\End{(V)}^W$ (resp. $\Aut{(V)}^W$), $\gr^W_k(X)$ is the 
induced action of $X$ on $\gr^W_k$;
\item For $X\in \End{(V)}^W$ and $\lambda >0$, we write 
$\lambda^X=\exp{(\log{(\lambda)} X)}\in \Aut{(V)}^W$;
\item For the decreasing filtration $F=\{F^p\}_{p\in\ZZ}$ on 
$$
     V_{\CC}:=V\otimes \CC,\qquad 
     F(\gr^W_k)=\{F^p(\gr^W_k)\}_{p\in\ZZ}
$$ is the  filtration induced by $F$ on $\gr^W_{k,\CC}:=\gr^W_k\otimes\CC$.
\item Let $(V',W')$ be a pair of $\RR$-vector space and an increasing 
filtration and let $p:V\to V'$ be the $\RR$-linear map which preserves the 
filtration. $\gr(p)$ (resp. $\gr_k(p)$) is the linear map 
$\gr^{W}(V)\to \gr^{W'}(V')$ (resp. $\gr_k^{W}\to \gr_k^{W'}$).
\end{itemize}

Let $\mu=(\mu_1,\ldots ,\mu_r)\in \ZZ^r$.
We denote $\mu(j)=\mu_j$ for $1\leq j\leq r$.

\section{Metrics}
\subsection{Classifying Spaces}\label{cs} We fix a finite-dimensional
$\RR$-vector space $V$, an increasing filtration $W$ on $V$,
non-degenerate bilinear forms $\langle \; ,\;\rangle_w :\gr^W_w\times
\gr^W_w \to \RR$ for $w\in \ZZ$, which is symmetric if $w$ is even and
anti-symmetric if $w$ is odd, and non-negative integers $h^{p,q}_w$
with $h^{p,q}_{w}=h^{q,p}_{w}$ and with $h^{p,q}_w=0$ unless $p+q=w$
such that $\dim{\gr^W_{w}}=\sum_{p,q}h^{p,q}_{w}$.  Let $\M$ be the set
of all decreasing filtrations $F$ on $V_{\CC}$ for which $(W,F)$ is a
mixed Hodge structure (MHS) such that, for all $w\in \ZZ$,
$(\gr^{W}_w,F(\gr^W_w),\langle \; ,\;\rangle_w)$ is a polarized Hodge
structure with Hodge numbers $\{h^{p,q}_w\}$.  Let $\check{\M}$ be the
set of all decreasing filtrations on $V_{\CC}$ satisfying the
following two conditions:
\begin{itemize}
\item $\dim{(F^p(\gr^W_{w} )/F^{p+1}(\gr^W_{w} ))}=h^{p,w-p}_{w}$;
\item $\langle F^p(\gr^W_{w} ),F^q(\gr^W_{w} )\rangle_w=0$ for $p+q>w$.
\end{itemize}
Here $\check{\M}$ is a flag manifold including $\M$ as an open subset.
$\check{\M}$ is the called compact dual.

Let 
$$
    G_{A}=\{g\in GL{(V_A)}^W\; |\; 
           \gr^W_w{(g)}\in \Aut{(\gr^W_{w,A},\langle \; ,\;\rangle_w)}\}$$
for $A=\RR,\CC$.
Then $G_{\CC}$ acts on $\check{\M}$ transitively (\cite[(2.11.1)]{U}).


\subsection{Standard Metric} By a theorem of Deligne, a mixed Hodge
structure $(F,W)$ on $V_{\CC}$ determines a unique, functorial bigrading
\begin{equation}
  V_{\CC}=\bigoplus I^{p,q}          \label{eq:bigrading}
\end{equation} 
such that 
\begin{itemize}
\item[(a)] $F^p = \oplus_{a\geq p}\, I^{a,b}$;
\item[(b)] $W_k = \oplus_{a+b\leq k}\, I^{a,b}$;
\item[(c)]
$
  I^{p,q}
   =\bar{I}^{q,p} \pmod{\oplus_{r<p,s<q}I^{r,s}}
$
\end{itemize}
A MHS $(F,W)$ is called $\RR$-split if $I^{p,q} =\bar{I}^{q,p}$.  

\par A grading of $W$ corresponds to semisimple endomorphism $Y$ of 
$V_{\CC}$ such that $W_k$ is the direct sum of $W_{k-1}$ and the
$k$-eigenspace of $Y$ for each index $k$.  In particular, a mixed Hodge 
structure $(F,W)$ determines an associated grading $Y_{(F,W)}$ of $W$
which acts as multiplication by $p+q$ on $I^{p,q}$.

\begin{dfn}\cite{K,P1} Let $(F,W)$ be a graded-polarized mixed Hodge structure
with underlying complex vector space $V_{\CC}$.  Then, the associated mixed
Hodge metric $h$ is the unique hermitian inner product on $V_{\CC}$ which 
makes the associated bigrading \eqref{eq:bigrading} orthogonal and satisfies
\begin{equation}
     h(u,v)=i^{p-q}\langle \gr_{p+q}^W(u),\gr_{p+q}^W(\bar v)\rangle_{p+q}
     \label{eq:mixed-Hodge-metric}
\end{equation}
for $u,v\in I^{p,q}$.  The associated norm will be denoted $\|*\|$.
\end{dfn}

\par Suppose now that $\M$ is a classifying space of graded-polarized mixed
Hodge structures with underlying complex vector space $V_{\CC}$. Then, mixed
Hodge metric endows the trivial bundle $V\times\M\to\M$ with a hermitian
structure.  In particular, using the identification of $\M$ with a submanifold
of a flag variety, we obtain an associated hermitian metric on $\M$ which
we shall call the standard metric on $\M$.  

\begin{lem}\cite{P1} Let $\mathfrak{g}_{A}=\Lie{G_{A}}$ ($A=\RR,\CC$).  Then, by
the functoriality of the above constructions, a point $F\in\M$ induces
a mixed Hodge structure on $\mathfrak{g}_{\CC}$ and hence determines
an associated decomposition
$$
    \mathfrak{g}_{\CC}=\bigoplus_{p,q} \mathfrak{g}^{p,q}
$$
as in \eqref{eq:bigrading}. 
Since $G_{\CC}$ preserves the weight filtration, $\mathfrak{g}^{p,q}=0$ if $p+q>0$.
In particular, the Lie algebra of the
stabilizer of $F\in\M$ is 
$$
      \mathfrak{g}_{\CC}^F = \bigoplus_{p\geq 0}\, \mathfrak{g}^{p,q}
$$
and hence the subalgebra  
$$
         q_{F}=\bigoplus_{p<0,p+q\leq 0}\mathfrak{g}^{p,q}
$$
is a vector space complement to $\mathfrak{g}_{\CC}^F$ in $\mathfrak{g}_{\CC}$.
Accordingly, we may identify $q_{F}$ with $T_F(\M)$ via the map
$$
      v\mapsto \left.\frac{d}{dt}e^{tv}.F\right|_{t=0}
$$
Under this identification, the standard metric on $\M$ is given by the formula
$$ 
      h_F(\alpha,\beta)=\mathrm{Tr}{(\alpha\beta^*)}
$$
for $\alpha,\beta \in q_{F}$ where $\alpha,\beta$ in the right hand side are 
represented as matrices with respect to the unitary basis for $h_{F}$ and $\beta^*$ is the conjugate transpose in this basis. 
If we pass from $\mathrm{Lie}(G_\CC)$ to say $\mathfrak{sl}(V_\CC)$ then adjoint
maps $\mathfrak{sl}(V_\CC)^{p,q}$ to $\mathfrak{sl}(V_\CC)^{-p,-q}$.
\end{lem}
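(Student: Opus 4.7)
The plan is to unpack the functoriality of Deligne's bigrading on tensor constructions, identify the stabilizer and its complement, and then compute the standard metric in an $h_F$-unitary frame.

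First I would establish the MHS on $\mathfrak{g}_\CC$. The tensor construction endows $\End(V_\CC)$ with a MHS whose bigrading consists of the $X$ with $X(I^{r,s}) \subseteq I^{r+p,s+q}$. The subalgebra $\mathfrak{g}_\CC \subset \End(V_\CC)$ is cut out by two conditions: $X$ preserves $W$, and each $\gr^W_w(X)$ lies in the Lie algebra of the isometry group of $\langle\,,\,\rangle_w$. Both conditions are bigrading-compatible (the first because $W_k = \oplus_{a+b\leq k} I^{a,b}$, the second because each polarization is a morphism of Hodge structures on $\gr^W_w$), so $\mathfrak{g}_\CC$ is a sub-MHS. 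Since $X \in \mathfrak{g}^{p,q}$ maps $W_k$ into $W_{k+p+q}$, preservation of $W$ forces $p+q \leq 0$. Since $X$ maps $F^r = \oplus_{a\geq r} I^{a,b}$ into $F^{r+p}$, the stabilizer condition $X(F^r)\subseteq F^r$ amounts to $p \geq 0$; hence $\mathfrak{g}_\CC^F = \oplus_{p\geq 0}\mathfrak{g}^{p,q}$ and $q_F$ is the declared complement.

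The main step is the metric formula. I would pick a basis $\{e_i\}$ of $V_\CC$ with each $e_i \in I^{p_i,q_i}$ that is $h_F$-unitary, which is possible because $h_F$ is positive definite on each $I^{p,q}$ and makes distinct $I^{p,q}$ mutually orthogonal. The compact dual $\check{\M}$ sits inside an ambient flag variety, whose $h_F$-induced Fubini--Study type hermitian metric at $F$ is the restriction of the trace form $\alpha,\beta \mapsto \mathrm{Tr}(\alpha\beta^*)$ on $\End(V_\CC)$ to the $h_F$-orthogonal complement of the flag stabilizer. Restricting to $\check{\M}$ cuts tangent vectors down to $\mathfrak{g}_\CC/\mathfrak{g}_\CC^F$, and the $h_F$-orthogonal complement of $\mathfrak{g}_\CC^F$ inside $\mathfrak{g}_\CC$ is exactly $q_F$ by the adjoint statement in the last sentence of the lemma. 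For that adjoint claim itself, if $X \in \mathfrak{sl}(V_\CC)^{p,q}$, the identity $h_F(Xu,v) = h_F(u,X^*v)$ together with the orthogonality of the $I^{a,b}$ forces $X^*$ to shift types by $(-p,-q)$, placing it in $\mathfrak{sl}(V_\CC)^{-p,-q}$ and confirming that types with $p<0$ are $h_F$-orthogonal to types with $p\geq 0$.

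The main obstacle I anticipate is keeping the bookkeeping straight when matching the abstract trace formula on $\End(V_\CC)$ with the Fubini--Study type construction on the ambient flag variety, and then checking that the restriction to $q_F$ gives precisely the standard metric on $\M$. In the unitary basis above, the matrix of any $\alpha \in q_F$ shifts the Hodge filtration strictly downward, so $\mathrm{Tr}(\alpha\beta^*)$ captures only the infinitesimal flag-motion; this compatibility, together with the adjoint involution flipping the sign of the bigrading, is the key calculation.
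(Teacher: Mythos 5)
The paper states this lemma without proof, deferring to \cite{P1}, and your argument correctly reconstructs the standard one along essentially the intended lines. The two key points you supply — that $\mathfrak{g}_{\CC}$ is a sub-mixed Hodge structure of $\End(V_\CC)$ because it is the complexification of the real kernel of a morphism of mixed Hodge structures (preservation of $W$ plus infinitesimal isometry of the graded polarizations), and that $q_F$ is precisely the trace-form orthogonal complement of the flag stabilizer inside $\mathfrak{g}_{\CC}$ because the adjoint flips the bigrading and distinct bigraded pieces of $\End(V_\CC)$ are orthogonal — are exactly what is needed to identify the induced flag-variety metric with $\mathrm{Tr}(\alpha\beta^*)$ on $q_F$.
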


%

\begin{lem}\label{lem:isometries}\cite{K,P2} Given a point $F\in\M$, let
$$
     \Lambda^{-1,-1}  = \bigoplus_{r,s<0}\, \mathfrak g^{r,s}
$$
and  $g\in G_{\RR}\cup\exp(\Lambda^{-1,-1})$. Then,
$$
      L_{g*}:T_F(\M) \to T_{g.F}(\M)
$$
is an isometry. In particular,  the distance $d$ defined by the standard metric is a $G_{\RR}$-invariant distance 
(\cite[Theorem 2.5]{P1}).
\end{lem}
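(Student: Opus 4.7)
The plan is to use the identifications $T_F(\M)\cong q_F$ and $T_{g.F}(\M)\cong q_{g.F}$ from the preceding lemma. From $L_g(e^{t\alpha}.F) = e^{t\Ad(g)\alpha}.(g.F)$ one sees that $L_{g*}$ corresponds to $\Ad(g)$ under these identifications, so the lemma reduces to two statements: (i) $\Ad(g)$ maps $q_F$ isomorphically onto $q_{g.F}$, and (ii) $\mathrm{Tr}\bigl(\Ad(g)\alpha\,(\Ad(g)\beta)^{*_{g.F}}\bigr) = \mathrm{Tr}(\alpha\beta^{*_F})$, where $*_F$, $*_{g.F}$ denote conjugate-transpose with respect to $h_F$ and $h_{g.F}$.

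Both (i) and (ii) rest on the equivariance of Deligne's bigrading, $I^{p,q}_{(g.F,W)} = g.I^{p,q}_{(F,W)}$. For $g\in G_\RR$ this is the functoriality recorded in Remark~\ref{rem:splitting}. For $g=e^\lambda$ with $\lambda\in\Lambda^{-1,-1}$, one checks the three axioms (a)--(c) of Deligne's bigrading for the candidate family $\{e^\lambda.I^{p,q}_{(F,W)}\}$: (a) is immediate; (b) follows because $\lambda(W_k)\subseteq W_{k-2}$ so that $e^\lambda$ preserves $W$; and (c) is the equivariance statement recorded in the remark following~\eqref{eq:naive-2}. Applying this to the MHS on $\mathfrak{g}_\CC$ induced by $F$ then gives $\mathfrak{g}^{p,q}_{(g.F,W)} = \Ad(g)\mathfrak{g}^{p,q}_{(F,W)}$, hence $\Ad(g)q_F = q_{g.F}$, settling~(i).

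For~(ii), a standard computation shows $(\Ad(g)A)^{*_{g.F}} = \Ad(g)(A^{*_F})$ whenever $g\colon(V_\CC,h_F)\to(V_\CC,h_{g.F})$ is a unitary isomorphism, and cyclicity of the trace then yields the desired identity. Unitarity in turn reduces, by the bigrading orthogonality built into~\eqref{eq:mixed-Hodge-metric}, to checking $h_{g.F}(gu,gv) = h_F(u,v)$ for $u,v\in I^{p,q}_{(F,W)}$. For $g\in G_\RR$ this uses $\bar g = g$ together with the fact that $\gr^W_{p+q}(g)\in\Aut(\gr^W_{p+q},\langle\cdot,\cdot\rangle_{p+q})$. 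For $g = e^\lambda\in\exp(\Lambda^{-1,-1})$ the subtlety is that $\lambda$ need not be real, but $\lambda(W_k)\subseteq W_{k-2}$ and equally $\bar\lambda(W_k)\subseteq W_{k-2}$, so both $g$ and $\bar g$ act as the identity on every $\gr^W_w$ and the graded polarizations are preserved automatically.

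The $G_\RR$-invariance of the induced distance $d$ then follows by integrating the infinitesimal isometry along smooth curves. I expect the conjugation bookkeeping in~(ii) for the $\exp(\Lambda^{-1,-1})$ case to be the main technical point, since $\lambda$ is not in general real; it is precisely the strict negativity of both indices defining $\Lambda^{-1,-1}$ that rescues the computation by forcing trivial action on the weight graded quotients.
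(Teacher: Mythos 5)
Your argument is correct; the paper itself gives no proof of this lemma (it is quoted from \cite{K,P2} and \cite[Theorem 2.5]{P1}), and what you write is precisely the standard argument those sources rely on: identify $L_{g*}$ with $\Ad(g):q_F\to q_{g.F}$ via the equivariance $I^{p,q}_{(g.F,W)}=g.I^{p,q}_{(F,W)}$, then reduce the isometry property to unitarity of $g:(V_{\CC},h_F)\to(V_{\CC},h_{g.F})$, which for $g\in G_{\RR}$ follows from reality of $g$ and $\gr^W(g)\in\Aut(\langle\,,\,\rangle)$, and for $g=e^{\lambda}$ with $\lambda\in\Lambda^{-1,-1}$ from the fact that $\lambda$ and $\bar\lambda$ lower $W$ by two, so $\gr^W(g)=\gr^W(\bar g)=\mathrm{id}$. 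I see no gaps.
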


\par Let $Y$ be a real grading of $W$.  Then 
$\lambda^{-\half Y}=\exp{(-\half\log{(\lambda)}Y)}$ for $\lambda>0$ 
is an automorphism of $\M$. We then have the following Lemma:

\begin{lem}[\cite{P1} Lemma 5.7]\label{lem:P5.7}  Let $Y$ be a grading of $W$ 
defined over $\RR$, $y>0$, $\alpha\in\mathbb R$ and $F\in\M$.  Let $\|*\|$ 
denote the standard  metric~\eqref{eq:mixed-Hodge-metric}. Then, 
\begin{equation}
       \|y^{\alpha Y}(v) \|_{y^{\alpha Y}.F}=y^{(p+q)\alpha}\|v\|_{F}
       \label{eq:scale-factor}
\end{equation}
if $v\in I^{p,q}_{(F,W)}$. 
\end{lem}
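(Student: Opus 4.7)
The plan is to exploit two facts: (i) Deligne's bigrading is functorial under real $W$-preserving automorphisms of $V$, and (ii) the operator $y^{\alpha Y}$ acts by the scalar $y^{\alpha w}$ on each weight-graded piece $\gr^W_w$. Together these reduce the claim to plugging into the defining formula \eqref{eq:mixed-Hodge-metric} for the standard metric.

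I would first observe that $y^{\alpha Y}.F$ is again in $\M$. Indeed, since $Y$ is a real grading of $W$ it acts as multiplication by $w$ on the $w$-eigenspace, so $\gr^W_w(y^{\alpha Y}) = y^{\alpha w}\cdot\mathrm{id}$ is scalar; in particular the induced Hodge filtration on $\gr^W_w$ is unchanged and all polarized Hodge structure conditions are preserved. The same scalar action rescales $\langle\,,\,\rangle_w$ by $y^{2\alpha w}$, so $y^{\alpha Y}$ is not in $G_\RR$ for generic $\alpha, y$, which is precisely why Lemma~\ref{lem:isometries} cannot be applied directly and a separate argument is needed.

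Next I would verify the functoriality $I^{p,q}_{(g.F, W)} = g(I^{p,q}_{(F,W)})$ for any $g\in GL(V_\RR)$ preserving $W$ such that $(g.F, W)$ remains a MHS: the family $\{g(I^{p,q}_{(F,W)})\}$ visibly satisfies the three characterizing properties (a)--(c) of the bigrading of $(g.F, W)$, so equality follows from uniqueness; this is the same kind of functoriality recorded for $\delta$ and $\epsilon$ in Remark~\ref{rem:splitting}. Applied to $g = y^{\alpha Y}$, every $v\in I^{p,q}_{(F,W)}$ satisfies $y^{\alpha Y}(v)\in I^{p,q}_{(y^{\alpha Y}.F,W)}$ with $\gr^W_{p+q}(y^{\alpha Y}(v)) = y^{\alpha(p+q)}\gr^W_{p+q}(v)$. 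Substituting into \eqref{eq:mixed-Hodge-metric} and using bilinearity of $\langle\,,\,\rangle_{p+q}$ together with the reality of $y^{\alpha Y}$ (so that it commutes with complex conjugation) extracts a factor $y^{2\alpha(p+q)}$ from the squared norm; taking positive square roots yields \eqref{eq:scale-factor}. I anticipate no substantive obstacle — the lemma is essentially a bookkeeping consequence of how a real grading of $W$ interacts with the Deligne bigrading; the only point of care is not invoking $G_\RR$-invariance, since $y^{\alpha Y}$ sits outside $G_\RR$.
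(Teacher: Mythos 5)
Your proof is correct and follows essentially the same route as the paper: identify $y^{\alpha Y}(v)$ as an element of $I^{p,q}_{(y^{\alpha Y}.F,W)}$, note that $y^{\alpha Y}$ acts as the scalar $y^{\alpha(p+q)}$ on $\gr^W_{p+q}$ so that $y^{\alpha Y}(v)\equiv y^{\alpha(p+q)}v \pmod{W_{p+q-1}}$, and substitute into \eqref{eq:mixed-Hodge-metric} using that $F$ and $y^{\alpha Y}.F$ induce the same Hodge structure on $\gr^W$. Your added justifications (the uniqueness argument for functoriality of the Deligne bigrading and the remark that $y^{\alpha Y}\notin G_\RR$, so Lemma~\ref{lem:isometries} is unavailable) are correct refinements of steps the paper states without proof.
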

\begin{proof}  Let $v\in I^{p,q}_{(F,W)}$.  Then, since $Y$ is defined
over $\RR$, 
$$
        y^{\alpha Y}(v)\in I^{p,q}_{(y^{\alpha Y}.F,W)}.
$$
Moreover, since both $Y$ and $Y_{(F,W)}$ are gradings of $W$, we can find
$u\in W_{p+q-1}$ so that $Y(v+u) = (p+q)(v+u)$.  Accordingly,
$$
\aligned
       y^{\alpha Y}(v) 
                &=  y^{\alpha Y}(v+u-u)
                 =  y^{\alpha (p+q)}(v+u) +  y^{\alpha Y}(u) \\
                &=  y^{\alpha (p+q)} v \mod W_{p+q-1}
\endaligned
$$
and hence by the definition of the standard metric,
$$ 
      \|y^{\alpha Y}(v) \|_{y^{\alpha Y}.F}= y^{\alpha (p+q)}\|v\|_{F}
$$
since $y^{\alpha Y}.F$ and $F$ induce the same Hodge structure on $Gr^W$.
\end{proof}

\begin{rem} To see that $y^{\alpha Y}.F$ remains in $\mathcal M$, observe
that it is a mixed Hodge structure by Remark \eqref{rem:splitting} since 
$y^{\alpha Y}\in GL(V_{\mathbb R})$ preserves $W$.  Moreover, since $y^{\alpha Y}$
acts by scalar multiplication on each $Gr^W_k$, it acts trivially on the
induced filtrations $F Gr^W$.
\end{rem}

\par The length of the weight filtration $W$ is the difference
\begin{equation}
      L = \min\{ k \mid W_k = V\} - \max\{k \mid W_k = 0\}
      \label{eq:length}
\end{equation}

\begin{cor}\label{cor:distortion} Under the hypothesis of 
Lemma~\eqref{lem:P5.7}
$$
       d(y^{\alpha Y}.F,y^{\alpha Y}.F') \leq y^{-\alpha (L-1)}d(F,F')
$$
if $\alpha<0$ and $y>1$.
\end{cor}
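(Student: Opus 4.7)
The plan is to show that $y^{\alpha Y}:\M\to\M$ is Lipschitz with constant $y^{-\alpha(L-1)}$; the corollary then follows by integrating along minimizing curves. I would first identify the differential $L_{y^{\alpha Y}*}:T_F(\M)\to T_{y^{\alpha Y}.F}(\M)$ under the identifications $T_{(\cdot)}(\M)\cong q_{(\cdot)}$. The relation $y^{\alpha Y}e^{tv}y^{-\alpha Y}=e^{t\,\Ad(y^{\alpha Y})(v)}$ gives $L_{y^{\alpha Y}*}(v)=\Ad(y^{\alpha Y})(v)$, and functoriality of Deligne's bigrading under the $\RR$-linear, $W$-preserving map $y^{\alpha Y}$ yields $\Ad(y^{\alpha Y})(\mathfrak{g}^{p,q}_{(F,W)})=\mathfrak{g}^{p,q}_{(y^{\alpha Y}.F,W)}$. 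In particular the $(p,q)$-type is preserved, so $\Ad(y^{\alpha Y})$ already lands in $q_{y^{\alpha Y}.F}$ with no further projection needed.

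I would then transport the proof of Lemma~\ref{lem:P5.7} to the adjoint representation on $\mathfrak g$. Since $Y$ and $Y_{(F,W)}$ are both gradings of $W$, their difference lies in $W(\mathfrak g)_{-1}$, so for $v\in\mathfrak{g}^{p,q}_{(F,W)}$ there exists $u\in W(\mathfrak g)_{p+q-1}$ with $[Y,v+u]=(p+q)(v+u)$. Consequently $\Ad(y^{\alpha Y})(v)\equiv y^{\alpha(p+q)}v\pmod{W(\mathfrak g)_{p+q-1}}$, and since $F$ and $y^{\alpha Y}.F$ induce the same Hodge structure on each $\gr^W_k$, the standard norm of an element of $\mathfrak{g}^{p,q}_{(y^{\alpha Y}.F,W)}$ depends only on its image in $\gr^{W(\mathfrak g)}_{p+q}$, giving
$$\|\Ad(y^{\alpha Y})(v)\|_{y^{\alpha Y}.F}=y^{\alpha(p+q)}\|v\|_F.$$
Combined with the orthogonality of the bigrading under the standard metric, for $v=\sum v^{p,q}\in q_F$
$$\|L_{y^{\alpha Y}*}(v)\|^2_{y^{\alpha Y}.F}=\sum y^{2\alpha(p+q)}\|v^{p,q}\|_F^2.$$

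To finish, I would bound the scale factor. In $q_F$ we have $p<0$ and $p+q\leq 0$, while any nonzero $\mathfrak g^{p,q}$ satisfies $p+q\geq -(L-1)$ because such an endomorphism shifts bigrading indices on $V$ by $(p,q)$ and $V$ supports only $L$ consecutive weights. With $\alpha<0$ and $y>1$, the exponent $\alpha(p+q)$ is maximized at $p+q=-(L-1)$, giving $y^{\alpha(p+q)}\leq y^{-\alpha(L-1)}$ and hence
$$\|L_{y^{\alpha Y}*}(v)\|_{y^{\alpha Y}.F}\leq y^{-\alpha(L-1)}\|v\|_F.$$
Integrating this pointwise bound along any piecewise smooth curve $\gamma$ from $F$ to $F'$ shows $\mathrm{length}(y^{\alpha Y}.\gamma)\leq y^{-\alpha(L-1)}\,\mathrm{length}(\gamma)$, and taking the infimum over $\gamma$ yields the stated inequality. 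The step requiring most care is the clean identification $L_{y^{\alpha Y}*}=\Ad(y^{\alpha Y})$ with no projection to $q_{y^{\alpha Y}.F}$; this works because $Y$ being real forces $\Ad(y^{\alpha Y})$ to preserve the $(p,q)$-type exactly.
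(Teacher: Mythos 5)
Your proof is correct and follows essentially the same route as the paper: decompose the tangent vector by the weight $p+q$ of the Deligne bigrading of $\mathfrak g$, apply the adjoint form of the scale factor from Lemma~\eqref{lem:P5.7} together with orthogonality of the graded pieces, bound $y^{\alpha(p+q)}$ by $y^{-\alpha(L-1)}$ using $p+q\in[-(L-1),0]$, and then integrate along curves and take the infimum. The only difference is that you spell out the justification of the adjoint version of \eqref{eq:scale-factor} (and the identification $L_{y^{\alpha Y}*}=\Ad(y^{\alpha Y})$ on $q_F$), which the paper invokes without proof.
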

\begin{proof} Let $\gamma:[0,1]\to\mathcal M$ be a smooth curve from
$F$ to $F'$.  Then, $y^{\alpha Y}.\gamma(u)$
is a curve from $y^{\alpha Y}.F$ to $y^{\alpha Y}.F'$.  Therefore,
$$
      d(y^{\alpha Y}.F,y^{\alpha Y}.F')
      \leq\int_0^1\, 
       \|L_{y^{\alpha Y}*}\gamma'(u)\|_{y^{\alpha Y}.\gamma(u)}\,du 
$$
where $\|\cdot\|$ is the standard metric on $\mathfrak{g}_{\CC}$.
Let
$$
         \gamma'_k(u) = \sum_{p+q=k}\,\gamma'(u)^{p,q}
$$
according to the decomposition~\eqref{eq:bigrading} along $\gamma$.  Then, by
the adjoint form of equation~\eqref{eq:scale-factor},
$$ 
       \|L_{y^{\alpha Y}*}\gamma'_k(u)\|_{y^{\alpha Y}.\gamma(u)}
       = y^{k\alpha}\|\gamma'_k(u)\|_{\gamma(u)}
$$
Accordingly, since the subspaces
$
       \oplus_{p+q=k}\, \mathfrak g^{p,q}
$
are orthogonal for different values of $k$ and $\mathfrak{g}^{p,q}=0$ if $p+q\notin[-(L-1),0]$, it then follows that
$$
       \|L_{y^{\alpha Y}*}\gamma'(u)\|_{y^{\alpha Y}.\gamma(u)}
       \leq y^{-\alpha(L-1)}\|\gamma'(u)\|_{\gamma(u)}
$$
and hence
$$
      d(y^{\alpha Y}.F,y^{\alpha Y}.F')
      \leq y^{-\alpha (L-1)}\int_0^1\, \|\gamma'(u)\|_{\gamma(u)}\,du
$$
Taking the infimum over all such paths $\gamma$ yields
$$
      d(y^{\alpha Y}.F,y^{\alpha Y}.F')\leq  y^{-\alpha (L-1)}d(F,F')
$$
as required.
\end{proof}

\subsection{Twisted Metric} Let $\M$ be a classifying space of 
graded-polarized mixed Hodge structures, and
$$
       \tau:\M\to (0,\infty)
$$
be a smooth function.  Then, the associated twisted metric $|*|$ on 
$V_{\CC}\times\M\to\M$ is obtained by scaling the Hodge metric
\eqref{eq:mixed-Hodge-metric} on $I^{p,q}_{(F,W)}$ by $\tau(F)^{(p+q)/2}$.
 
\par The reason to introduce the twisted metric is to undo the scaling
effect seen in Lemma~\eqref{lem:P5.7}.  More precisely, as discussed in
the introduction, in order to obtain a relative compactness result in
the mixed case, we have to twist the period map by the inverse of 
$$
           t(y) =y_1^{-Y_0/2}\prod_{j=1}^r y_j^{-H_j/2}
$$
where $H_1,\dots,H_r\in\mathfrak{g}_{\RR}$ are the semisimple elements of 
commuting representations of $sl_2(\RR)$ on $V$ and $Y_0$
is a real grading of $W$ which commutes with $H_1,\dots,H_r$.  

\par Let $\tau$ be the twisting function defined in \eqref{eq:sl2-twist}
and $Y$ be a real grading of $W$.  Then, by Remark~\eqref{rem:splitting}
$$
\aligned
   \tau(y^{-Y/2}.F) 
    &= 1 + \sum_{p,q<0}\, \|\epsilon^{p,q}(y^{-Y/2}.F,W)\|_{y^{Y/2}.F}^{-\frac{2}{p+q}} \\
    &= 1 + \sum_{p,q<0}\, 
           \|y^{-Y/2}.\epsilon^{p,q}(F,W)\|_{y^{-Y/2}.F}^{-\frac{2}{p+q}} \\
    &= 1 + \sum_{p,q<0}\, 
       \left(y^{-\frac{p+q}{2}}\|\epsilon^{p,q}(F,W)\|_F\right)^{-\frac{2}{p+q}} \\ 
    &= 1 + \sum_{p,q<0}\, y \|\epsilon^{p,q}(F,W)\|_F^{-\frac{2}{p+q}} \\
    &= 1 + y(\tau(F)-1)
\endaligned
$$
for any $F\in\mathcal M$. Likewise, by the $G_{\mathbb R}$-invariance of the
standard metric and Remark~\eqref{rem:splitting}, it follows that
\begin{equation}
     \tau(t(y).F)
      = 1 + \sum_{p,q<0}\, y \|\epsilon^{p,q}(F,W)\|_F^{-\frac{2}{p+q}}
      = 1 + y_1(\tau(F)-1) 
      \label{eq:twisting-factor}
\end{equation}
In particular, as long as $\epsilon(F,W)$ is non-zero (i.e. $(F,W)$ is not
split over $\mathbb R$), it follows that
\begin{equation}
            \tau(t(y).F) = 1 + C y_1         \label{eq:basic-bound}
\end{equation}
 with fixed $F$ for some non-zero constant $C$ related to $F$.  Inspection of
the right hand side of equation~\eqref{eq:twisting-factor} shows that
\eqref{eq:basic-bound} holds for variable $F$ which remains in a compact
set which does not intersect the locus of points 
$\mathcal M_{\mathbb R}\subseteq\mathcal M$ for which the associated mixed
Hodge structure is split over $\mathbb R$.

\begin{rem} The series of computations outlined above hold for Deligne's
$\delta$-splitting in place of $\epsilon$, and hence the conclusions of
this paper for the twisted metric also hold for the corresponding metric
obtained by replacing $\epsilon^{p,q}$ by $\delta^{p,q}$ in the definition
of the twisted metric. Here $\exp{(\epsilon)}=\exp{(i\delta)}\exp{(-\zeta)}$ 
with a $\RR$-linear map $\zeta: V\to V$ (see \cite{KNU} for detail).
\end{rem}

\begin{cor}\label{cor:twist-bound} Let $Y$ be a grading of $W$ 
defined over $\mathbb R$ and $F\in\mathcal M$. Then, the twisted metric 
satisfies
\begin{equation}
       \frac{|t(y)(v)|_{t(y).F}}{|v|_F}
        = \left(\frac{y_1^{-1} + \tau(F) - 1}{\tau(F)}\right)^{\frac{p+q}{2}}
       \label{eq:twist-bound}
\end{equation}
if $v\in I^{p,q}_{(F,W)}$. 
\end{cor}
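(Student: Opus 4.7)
The plan is to combine Lemma~\eqref{lem:P5.7} with the already-derived identity~\eqref{eq:twisting-factor}. First I would factor
$$
t(y) = y_1^{-Y_0/2}\cdot g_0, \qquad g_0 = \prod_{j=1}^{r} y_j^{-H_j/2}.
$$
Since each $H_j\in\frakg_\RR$, the factor $g_0$ lies in $G_\RR$. The naturality of the Deligne bigrading (Remark~\eqref{rem:splitting}) therefore gives $g_0(v)\in I^{p,q}_{(g_0.F,W)}$, while the definition~\eqref{eq:mixed-Hodge-metric} of the standard inner product in terms of the graded polarizations, which $G_\RR$ preserves by construction, yields $\|g_0(v)\|_{g_0.F}=\|v\|_F$.

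Next I would apply Lemma~\eqref{lem:P5.7} with $\alpha=-\tfrac12$ and grading $Y_0$ to the vector $g_0(v)$ at the base point $g_0.F$. This gives $t(y)(v)\in I^{p,q}_{(t(y).F,W)}$ together with
$$
\|t(y)(v)\|_{t(y).F} = y_1^{-(p+q)/2}\,\|v\|_F.
$$
Since the twisted metric is obtained from the standard metric by rescaling the norm of an element of type $(p,q)$ by $\tau(\cdot)^{(p+q)/2}$, forming the ratio yields
$$
\frac{|t(y)(v)|_{t(y).F}}{|v|_F} = \left(\frac{\tau(t(y).F)}{y_1\,\tau(F)}\right)^{(p+q)/2}.
$$
Substituting $\tau(t(y).F) = 1 + y_1(\tau(F)-1)$ from~\eqref{eq:twisting-factor} and dividing by $y_1\tau(F)$ collapses the parenthesis to $(y_1^{-1}+\tau(F)-1)/\tau(F)$, which is the claimed identity.

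The whole argument is essentially bookkeeping; I do not expect any substantial obstacle. The one point that deserves explicit mention is that Lemma~\eqref{lem:isometries} is formulated for the differential $L_{g*}$ on the tangent bundle, whereas I am using the $G_\RR$-invariance of the standard norm on $V_\CC$ itself. This extension is automatic from~\eqref{eq:mixed-Hodge-metric} together with the fact that elements of $G_\RR$ preserve both $W$ and the bilinear forms $\langle\,,\,\rangle_w$ on each $\gr^W_w$, so that the two scaling contributions---the $y_1^{-(p+q)/2}$ coming from $Y_0$ via Lemma~\eqref{lem:P5.7} and the $\tau$-factor coming from the twisting function---combine cleanly via~\eqref{eq:twisting-factor}.
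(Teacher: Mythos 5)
Your proposal is correct and takes essentially the same route as the paper's (much terser) proof: the paper likewise combines the scale factor \eqref{eq:scale-factor} from Lemma~\ref{lem:P5.7} for the $y_1^{-Y_0/2}$ part, the $G_{\RR}$-invariance of the standard metric for the $\prod_j y_j^{-H_j/2}$ part, and the identity $\tau(t(y).F)=1+y_1(\tau(F)-1)$ from \eqref{eq:twisting-factor}. Your version merely spells out the bookkeeping that the paper compresses into one line.
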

\begin{proof} By \eqref{eq:scale-factor} and the previous calculations,
it follows from the definition of the twisted metric that
$$
       |t(y)(v)|_{t(y).F} = (y_1^{-1} + \tau(F) - 1)^{\frac{p+q}{2}}\| v\|_F
$$
whereas $|v|_F = \tau(F)^{\frac{p+q}{2}}\| v\|_F$
\end{proof}

\par Before stating out next result, we recall the following 
from~\cite[pg 334]{C}

\begin{dfn} Let $M$ a Riemannian manifold.  A subset $X\subseteq M$
is geodesically convex if any of its points are joined by a unique 
shortest geodesic in $M$, and this geodesic lies in $X$.
\end{dfn}

\begin{thm} (Whitehead) Let $W\subset M$ be open and $x\in W$.  Then,
there is a geodesically convex open neighborhood $U\subset W$ of $x$.
\end{thm}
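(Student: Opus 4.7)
The plan is to realize the convex neighborhood as a small geodesic ball $B_r(x)$ and to verify convexity via the second derivative test applied to the squared distance function $f_p(q) = d(p,q)^2$. The strategy is classical: inside a suitable totally normal neighborhood, $f_p$ is smooth and its Hessian at every point is positive definite, so that $f_p \circ \gamma$ is strictly convex along any geodesic $\gamma$, forcing $\gamma$ to remain in the sublevel sets of $f_p$.

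First, I would invoke the standard existence of a totally normal neighborhood of $x$ (obtained by a uniform lower bound on the injectivity radius on a compact neighborhood of $x$): there exist an open set $V$ with $x \in V \subset W$ and $\epsilon_0 > 0$ such that, for every $p \in V$, the exponential map $\exp_p$ is a diffeomorphism of the ball of radius $\epsilon_0$ in $T_p M$ onto an open set containing $V$, and any two points of $V$ are joined by a unique minimizing geodesic of length less than $\epsilon_0$ which depends smoothly on its endpoints.

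Next, I would analyze the smooth function $(p,q) \mapsto f_p(q) = d(p,q)^2$ on $V \times V$. In normal coordinates at $p$ we have $f_p(q) = |\exp_p^{-1}(q)|_{g_p}^2$, whose Hessian at the center $q = p$ is exactly $2 g_p$ and hence positive definite. Since the Hessian depends continuously on $(p,q)$, there exists $0 < r < \epsilon_0 / 2$ with $B_{2r}(x) \subset V$ such that $\mathrm{Hess}_q f_p$ remains positive definite for every $p, q$ in the closed ball $\overline{B}_{2r}(x)$. I would then set $U := B_r(x)$: given $p_1, p_2 \in U$, the totally normal property supplies a unique minimizing geodesic $\gamma : [0,1] \to V$ joining them, of length less than $2r$, so $\gamma$ remains in $\overline{B}_{2r}(x)$. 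For any $p \in U$, the function $h(t) := f_p(\gamma(t))$ satisfies $h''(t) = \mathrm{Hess}_{\gamma(t)} f_p(\gamma'(t), \gamma'(t)) > 0$, so $h$ is strictly convex on $[0,1]$. Taking $p = x$ gives $h(0), h(1) < r^2$, hence $h(t) < r^2$ and $\gamma(t) \in U$ for all $t$; the minimizing geodesic between any two points of $U$ therefore lies in $U$, and uniqueness is inherited from $V$.

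The main obstacle is upgrading positive-definiteness of the Hessian of $f_p$ from the single point $q = p$ to a full neighborhood of $x$, uniformly in $p$; this rests on continuity of the Hessian in $(p,q)$ together with its explicit value at the diagonal, both of which are transparent once normal coordinates are introduced. Once this Hessian estimate is in hand, the convexity of $B_r(x)$ becomes a direct application of the one-variable strict convexity of $h$.
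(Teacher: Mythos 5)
The paper does not prove this statement at all: it is quoted verbatim as Whitehead's classical theorem with a citation to Conlon's \emph{Differentiable Manifolds} (p.~334), so there is no ``paper proof'' to compare against. Your argument is the standard textbook proof (essentially do Carmo's / Whitehead's original): work inside a totally normal neighborhood, observe that $\mathrm{Hess}_q\, d(p,\cdot)^2$ equals $2g_p$ at $q=p$ and hence is positive definite on a uniform neighborhood of the diagonal by continuity and compactness, and then use strict convexity of $t\mapsto d(x,\gamma(t))^2$ along the unique short minimizing geodesic to trap it in the ball $B_r(x)$. The logic is sound and complete in outline. One step deserves an explicit line: the claim that the minimizing geodesic $\gamma$ joining $p_1,p_2\in B_r(x)$ stays in $\overline{B}_{2r}(x)$ does not follow merely from ``length less than $2r$'' plus the triangle inequality from one endpoint (that only gives $3r$); you should note that since $\gamma$ is minimizing, $d(p_1,\gamma(t))+d(\gamma(t),p_2)=d(p_1,p_2)<2r$, so every point of $\gamma$ is within distance $r$ of one of the endpoints and hence within $2r$ of $x$. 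Also make sure the totally normal neighborhood statement you invoke includes that short geodesics between points of $V$ are \emph{minimizing}, so that $f_p(q)=|\exp_p^{-1}(q)|^2$ really is the squared Riemannian distance and uniqueness of the shortest geodesic in $M$ (not just in $V$) follows; with those two remarks the proof is complete.
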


  
\begin{thm}\label{thm:twisted-distance} Let $d_{\tau}$ denote the Riemannian
distance associated to the twisted metric.  Assume that for 
$y_1,\dots,y_r$ sufficiently large the points $t^{-1}(y).F_1$ and
$t^{-1}(y).F_2$ are contained in a geodesically convex set $U$ that is 
contained in a compact set which does not intersect the split locus 
$\mathcal M_{\mathbb R}$.  Then, there exists a constant $K$ such that
$$
           d_{\tau}(F_1,F_2)\leq K d_{\tau}(t^{-1}(y).F_1,t^{-1}(y).F_2)
$$
\end{thm}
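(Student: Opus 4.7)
The plan is to bound $d_\tau(F_1,F_2)$ by the length of the image under $L_{t(y)}$ of a minimizing $d_\tau$-geodesic joining $\tilde F_i:=t^{-1}(y).F_i$, and to control the pointwise expansion factor of $L_{t(y)*}$ uniformly by exploiting the compactness hypothesis. By geodesic convexity of $U$, there is a unique minimizing geodesic $\gamma:[0,1]\to U$ with $d_\tau(\tilde F_1,\tilde F_2)=\int_0^1|\gamma'(u)|_{\gamma(u)}\,du$. Since $\tilde\gamma(u):=t(y).\gamma(u)$ joins $F_1$ and $F_2$, one obtains
$$d_\tau(F_1,F_2)\le\int_0^1|L_{t(y)*}\gamma'(u)|_{t(y).\gamma(u)}\,du,$$
so the theorem reduces to a uniform bound $|L_{t(y)*}\alpha|_{t(y).F}\le K|\alpha|_F$ for $F$ in the ambient compact set $\mathcal K\supseteq U$ and $y_1$ large.

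For this key estimate I would transcribe Corollary~\ref{cor:twist-bound} to the $\mathfrak g_\CC$-bigrading. The derivation of~\eqref{eq:twist-bound} uses only Lemma~\ref{lem:P5.7}, the $G_\RR$-invariance of $\tau$ and of the twisted metric (so that the factors $y_j^{-H_j/2}$ of $t(y)$ act as twisted-metric isometries and only $y_1^{-Y_0/2}$ produces scaling), and the definition of the twisted metric. Repeating the same computation on the $\mathfrak g_\CC$-bigrading via $\mathrm{Ad}(t(y))$ yields, for $\alpha\in\mathfrak g^{p,q}_{(F,W)}\cap q_F$,
$$|L_{t(y)*}\alpha|_{t(y).F}=\left(\frac{y_1^{-1}+\tau(F)-1}{\tau(F)}\right)^{(p+q)/2}|\alpha|_F.$$
Because the standard metric on $\mathfrak g_\CC$ is orthogonal with respect to the $\mathfrak g^{p,q}$-decomposition, so is the twisted metric (which rescales within each bigrade), and hence the expansion factor on a general tangent vector is controlled by the maximum of the bigrade-by-bigrade factors above.

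The compact set $\mathcal K$ is disjoint from $\mathcal M_\RR$, so by~\eqref{eq:twisting-factor} and continuity of $\tau$ there exist constants $0<c\le C$ with $1+c\le\tau(F)\le C$ for every $F\in\mathcal K$. For any $y_1>0$ the ratio $(y_1^{-1}+\tau(F)-1)/\tau(F)$ is bounded below by $(\tau(F)-1)/\tau(F)\ge c/(1+c)$. Since tangent vectors $\alpha\in q_F$ have bigrade weights $p+q$ in the range $[-(L-1),0]$, raising this ratio to the nonpositive exponent $(p+q)/2$ produces a factor bounded above by $K:=((1+c)/c)^{(L-1)/2}$. Feeding this uniform bound back into the integral above yields $d_\tau(F_1,F_2)\le K\,d_\tau(\tilde F_1,\tilde F_2)$.

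The main obstacle is precisely what the hypothesis circumvents: the behavior of $\tau-1$ near the split locus. Were $\gamma$ not confined to a region where $\tau-1\ge c>0$, the quantity $(y_1^{-1}+\tau-1)/\tau$ could approach $0$ along the geodesic and the factor raised to the negative exponent $(p+q)/2$ would blow up as $y_1\to\infty$. Geodesic convexity of $U$ inside a compact set disjoint from $\mathcal M_\RR$ is precisely what traps $\gamma$ in a region where this cannot happen and so produces a uniform constant $K$.
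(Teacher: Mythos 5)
Your proposal is correct and follows essentially the same route as the paper: take the minimizing geodesic in $U$ between $t^{-1}(y).F_1$ and $t^{-1}(y).F_2$, push it forward by $L_{t(y)}$, apply the adjoint form of Corollary~\ref{cor:twist-bound} bigrade by bigrade, and bound the distortion factor uniformly using that $\tau-1$ stays away from $0$ on a compact set disjoint from $\mathcal M_{\mathbb R}$. Your write-up in fact supplies the explicit constant $K=((1+c)/c)^{(L-1)/2}$ that the paper leaves implicit.
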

\begin{proof} Let $\gamma:[0,a]\to U$ be the geodesic from 
$t^{-1}(y).F_1$ to $t^{-1}(y).F_2$.  Then,
$$
  d_{\tau}(F_1,F_2) 
   \leq \int_0^a | L_{t(y)*}\gamma'(u) |_{t(y).\gamma(u)}\,du
$$
where $|*|$ denotes the twisted metric.  Let
$$
       \gamma_k'(u)  = \bigoplus_{p+q=k}\, \gamma'(u)^{p,q}
$$
according to the decomposition~\eqref{eq:bigrading} along the curve $\gamma$.
Then, by Corollary \eqref{cor:twist-bound},
$$
       | L_{t(y)*}\gamma_k'(u) |_{t(y).\gamma(u)}
       = \left(\frac{y_1^{-1} + \tau(\gamma(u)) - 1}{\tau(\gamma(u))}
         \right)^{\frac{k}{2}} |\gamma'(u)|_{\gamma(u)}
$$
In particular, as long as $\gamma(u)$ stays in a compact set which 
does not intersect the split locus (where $\tau=1$), the distortion factor in 
the previous equation can be uniformly bounded away from zero.
\end{proof}
 
\par Of course, twisting the metric in this way reduces its symmetry.
 
\begin{lem}\label{lem:twisted-sym} The twisted metric $h_{\tau}$ is
$G_{\mathbb R}$-invariant, i.e. for any $g\in G_{\mathbb R}$ and $F\in\M$
$$
      L_{g*}: T_F(\M)\to T_{g.F}(\M)
$$
is an isometry.
\end{lem}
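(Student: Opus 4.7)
The plan is to reduce $G_{\mathbb{R}}$-invariance of the twisted metric to two ingredients already established: (i) the $G_{\mathbb{R}}$-invariance of the standard metric (Lemma \ref{lem:isometries}), and (ii) the functoriality/equivariance of Deligne's bigrading and of the splitting $\epsilon$ under elements of $G_{\mathbb{R}}$ (Remark \ref{rem:splitting}). Given these, the twisted metric is just the standard metric rescaled on each Hodge component by a factor depending only on the scalar $\tau(F)$, so the whole construction is $G_{\mathbb{R}}$-equivariant provided $\tau$ itself is $G_{\mathbb{R}}$-invariant.

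The first step is to verify $\tau(g.F) = \tau(F)$ for every $g \in G_{\mathbb{R}}$. Since $G_{\mathbb{R}}$ preserves $W$, Remark \ref{rem:splitting} gives $\epsilon(g.F,W) = \Ad(g)\epsilon(F,W)$; as $\Ad(g)$ carries $\mathfrak{g}^{p,q}_{(F,W)}$ onto $\mathfrak{g}^{p,q}_{(g.F,W)}$ (by functoriality of the bigrading under morphisms of mixed Hodge structures), this identity descends to Hodge components: $\epsilon^{p,q}(g.F,W) = \Ad(g)\epsilon^{p,q}(F,W)$. Because the standard metric is $G_{\mathbb{R}}$-invariant (Lemma \ref{lem:isometries}), one obtains $\|\epsilon^{p,q}(g.F,W)\|_{g.F} = \|\epsilon^{p,q}(F,W)\|_F$, and comparing to the definition \eqref{eq:sl2-twist} yields $\tau(g.F) = \tau(F)$.

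The second step is the pointwise comparison of metrics. For $v \in T_F(\mathcal{M})$ decomposed under \eqref{eq:bigrading} as $v = \sum_{p,q} v^{p,q}$ with $v^{p,q} \in \mathfrak{g}^{p,q}_{(F,W)}$, the pushforward $L_{g*}v$ decomposes as $\sum_{p,q} \Ad(g)v^{p,q}$ with $\Ad(g)v^{p,q} \in \mathfrak{g}^{p,q}_{(g.F,W)}$. By definition of the twisted metric,
\[
|L_{g*}v|^2_{g.F} = \sum_{p,q} \tau(g.F)^{p+q}\,\|\Ad(g)v^{p,q}\|^2_{g.F},
\]
and applying step one together with the $G_{\mathbb{R}}$-isometry property of $\|\cdot\|$ collapses the right-hand side to $\sum_{p,q}\tau(F)^{p+q}\|v^{p,q}\|_F^2 = |v|^2_F$.

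Neither step is genuinely hard once Remark \ref{rem:splitting} is in hand; the only subtlety worth flagging is the need for the Hodge-component version of Remark \ref{rem:splitting}, which follows because $\Ad(g)$ is a morphism of the mixed Hodge structures on $\mathfrak{g}_{\CC}$ induced by $(F,W)$ and $(g.F,W)$ and hence preserves the Deligne bigrading. With that in place the proof is essentially a one-line calculation, and no further input beyond Lemma \ref{lem:isometries} and the definition of $\tau$ is required.
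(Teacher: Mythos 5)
Your proof is correct and follows essentially the same route as the paper: first show $\tau(g.F)=\tau(F)$ via Remark~\eqref{rem:splitting} together with the $G_{\mathbb R}$-invariance of the standard metric, then observe that the twisted metric is the standard one rescaled componentwise by powers of the now $G_{\mathbb R}$-invariant scalar $\tau$. The extra detail you supply (equivariance of the Deligne bigrading under $\Ad(g)$ and the explicit componentwise computation) is exactly what the paper leaves implicit in the phrase ``the rest follows.''
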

\begin{proof} $\epsilon(g.F,W) = g.\epsilon(F,W)$, and hence
$$
       \|\epsilon^{p,q}(g.F,W)\|_{g.F} = \|\epsilon^{p,q}(F,W)\|_F
$$
The rest follows from the $G_{\mathbb R}$-invariance of the standard
metric.
\end{proof}

\subsection{Variation of mixed Hodge structure}\label{pn}
The definition of the notion of an admissible variation of mixed Hodge
structure is due to Steenbrink and Zucker in the 1-variable case~\cite{SZ}
and to Kashiwara~\cite{Ka} in the several variable case.  In particular,
locally, a variation of mixed Hodge structure is given by a holomorphic
map into a classifying space of graded-polarized mixed Hodge structure
$\mathcal M$ which satisfies Griffiths's horizontality condition
$$
       \frac{\partial}{\partial s_j}F^p(s)\subseteq F^{p-1}(s)
$$

\par As alluded to in the introduction, the admissibility of a variation
of mixed Hodge structure $\mathcal V\to\Delta^{*r}$ implies that it has
a limit Hodge filtration in analogy with the pure case.  The second 
part of admissibility is the existence of the relative weight filtration 
$M(N_j,W)$ of $W$ and each of the local monodromy logarithms.  By a theorem 
of Kashiwara, the limit Hodge filtration $F_{\infty}\in\check{\mathcal M}$ pairs 
with the relative weight filtration $M=M(N_1 + \cdots + N_r,W)$ to yield a
mixed Hodge structure for which each $N_j$ is $(-1,-1)$-morphism.  Moreover,
$$
        (e^{\sum_j\, z_j N_j}.F_{\infty},W)
$$
is an admissible nilpotent orbit.

\par Two basic references for period maps and nilpotent orbits in the 
mixed case are~\cite{KNU} and~\cite{BP1}.  Unless otherwise noted, the 
conventions in this paper conform to \S 2 of~\cite{BP1}, and we refer the 
reader two these two papers for additional background information on mixed 
period maps.

\section{Distance estimate} Let $\mathcal V$ be an admissible variation of 
mixed Hodge structure over $\Delta^{*r}$ with period map $F:U^r\to\mathcal M$ 
and associated nilpotent orbit 
\begin{equation}
       e^{\sum_j\, z_j N_j}.F_{\infty}            \label{eq:nilp-1}
\end{equation}
In this section, we prove the Schmid's original estimate between 
$F(z)$ and $e^{\sum_j\, z_j N_j}.F_{\infty}$ holds for the standard metric
on the classifying space $\mathcal M$.  More precisely, we prove
that if $L$ is the length~\eqref{eq:length} of the weight filtration
then
\begin{equation}
      d(F(z),F_{nilp}(z)) \leq K y_1^{(L-1)/2}\sum_j\, y_j^{b_j}e^{-2\pi y_j}
      \label{eq:weak-estimate}
\end{equation}
The two key steps are the derivation of equation~\eqref{eq:first-step}
and Lemma~\eqref{lem:rel-estimate} below, which both ultimately depend on
Griffiths's horizontality.

\par To begin, fix $(z_{j+1},\dots,z_r)\in U^{r-j}$.  Then, the map
$$
     (z_1,\dots,z_j) \mapsto F(z_1,\dots,z_r)
$$
arises from an admissible variation of mixed Hodge structure over $\Delta^{*j}$
with monodromy logarithms $N_1,\dots,N_j$.  Let
$F_{\infty}(z_{j+1},\dots,z_r)$ be the limit Hodge filtration of this period
map and
$$
     F_j(z_1,\dots,z_r) = e^{\sum_{k\leq j}\, z_k N_k}F_{\infty}(z_{j+1},\dots,z_r)
$$
be the associated nilpotent orbit.  

\par Following~\cite{CK}, the partial period map $F_j$ admits the following
description:  The Deligne bigrading
$$
           V_{\mathbb C} = \bigoplus_{p,q}\, I^{p,q}_{(F_{\infty},M)}
$$
of the limit mixed Hodge structure $(F_{\infty},M)$ of $\mathcal V$ induces 
a bigrading 
$$
       \mathfrak g_{\mathbb C} = \bigoplus_{a,b}\,\mathfrak g^{a,b}
$$
of $\mathfrak g_{\mathbb C}\subset V_{\mathbb C}\otimes V_{\mathbb C}^*$ such that
$$
        \mathfrak g^{a,b}(I^{p,q}_{(F_{\infty},M)})
        \subseteq I^{p+a,q+b}_{(F_{\infty},M)}
$$
By equation $(6.11)$ in~\cite{P1}, the period map has a local normal form
\begin{equation}
      F(z) = e^{\sum_j\, z_j N_j}e^{\Gamma(s)}.F_{\infty}
      \label{eq:lnf}
\end{equation}
where 
$$
      \Gamma:\Delta^r\to\mathfrak q,\qquad \Gamma(0) = 0
$$
is a holomorphic function taking values in the complement
$$
     \mathfrak q = \bigoplus_{a<0,b}\, \mathfrak g^{a,b}
$$
to the Lie algebra of the stabilizer of $F_{\infty}$ in $G_{\mathbb C}$.
For future reference, we record that $\mathfrak q$ has a natural
grading
$$
      \mathfrak q = \oplus_{a<0}\, \wp_a,\qquad 
      \wp_a = \bigoplus_b\, \mathfrak g^{a,b}
$$

\par To simplify notation, let us define $N(z) = \sum_{j=1}^r\, z_j N_j$
and set
\begin{equation} 
\label{eq:Gam_j}
          \Gamma_j(s)=\Gamma(\underbrace{0,\ldots ,0}_j,s_{j+1},\ldots ,s_n)
\end{equation}
Then, tracing through the above definition, it follows that
$$
      F_{\infty}(z_{j+1},\dots,z_r) 
      = e^{\sum_{k>j} z_k N_k}e^{\Gamma_j(s)}.F_{\infty}
$$
and hence $F_j(z) = e^{N(z)}e^{\Gamma_j(s)}.F_{\infty}$.

\begin{rem} The local normal form is only valid in some relative
neighborhood of $0$ in $\Delta^{*r}$.  Moreover, since $F_j(z)$ is constructed 
as a nilpotent orbit, technically we must further shrink $\Delta^{*r}$ in order 
get an admissible variation, i.e. $F_j(z)$ only takes values in $\mathcal M$ 
when
\begin{equation}
      \text{Im}(z_1),\dots,\text{Im}(z_r)>\alpha   \label{eq:implicit}
\end{equation}
for some constant $\alpha$.  However, since we are only interested in 
asymptotics, for simplicity of notation we shall leave~\eqref{eq:implicit}
as an implicit assumption in the remainder of this section.
\end{rem}


\begin{lem}[\cite{CK}Proposition 2.6]\label{lem:CK2.6}
For $1\leq k \leq j\leq r$, $[N_k,\Gamma_j]=0$.
\end{lem}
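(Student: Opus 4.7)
The plan is to exploit admissibility of the partial period map obtained by fixing $z_{j+1},\dots,z_r$, together with functoriality of the Deligne bigrading.

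First, observe that by definition $F_{\infty}(z_{j+1},\dots,z_r) = e^{\sum_{k>j} z_k N_k} e^{\Gamma_j(s')}.F_{\infty}$, so $\Gamma_j$ is the $\mf{q}$-valued local normal form datum (with $\Gamma_j(0)=0$) of the ``limit variation'' in the $z_{j+1},\dots,z_r$-directions, equivalently the restriction of $\Gamma(s)$ to the locus $s_1=\cdots=s_j=0$. The partial period map $F_{(j)}(z_1,\dots,z_j)$ obtained by fixing $z_{j+1},\dots,z_r$ in $F(z)$ is itself an admissible variation of mixed Hodge structure over $\Delta^{*j}$ with monodromy logarithms $N_1,\dots,N_j$ and limit Hodge filtration $F_{\infty}(z_{j+1},\dots,z_r)$.

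By Kashiwara's theorem applied to this partial variation, each $N_k$ for $k\leq j$ is a $(-1,-1)$-morphism of $(F_{\infty}(z_{j+1},\dots,z_r), M^{(j)})$, where $M^{(j)}$ is the relative weight filtration of $W$ with respect to $N_1+\cdots+N_j$. By functoriality of the Deligne bigrading under $G_{\CC}$ and the commutativity of the $N_k$'s, this family of conditions is equivalent to
$$ \Ad(e^{-\Gamma_j(s')}) N_k \in \mf{g}^{-1,-1}_{(F_{\infty}, M^{(j)})} $$
for every $s'$ in a neighborhood of the origin, the factor $e^{\sum_{k>j} z_k N_k}$ dropping out.

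Expanding via BCH, one has $\Ad(e^{-\Gamma_j(s')}) N_k = N_k + \sum_{n\geq 1} \frac{(-1)^n}{n!}\ad(\Gamma_j(s'))^n N_k$. The $s'=0$ term is the admissibility of the full nilpotent orbit, while the higher-order terms must lie in $\mf{g}^{-1,-1}_{(F_{\infty}, M^{(j)})}$ identically in $s'$. Using $\Gamma_j\in\mf{q}=\bigoplus_{a<0}\wp_a$ with $N_k\in\wp_{-1}$, the iterated commutators $\ad(\Gamma_j)^n N_k$ land in $\bigoplus_{a\leq -n-1}\wp_a$ with respect to the $(F_{\infty}, M)$-bigrading. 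Comparing this decomposition with the $(F_{\infty}, M^{(j)})$-bigrading in which the constraint lives---using the compatibility between the iterated relative weight filtrations---shows that the higher commutators are forced outside $\mf{g}^{-1,-1}_{(F_{\infty}, M^{(j)})}$ for $n\geq 1$, hence must vanish, yielding $[N_k,\Gamma_j(s')]=0$ for all $s'$.

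The main obstacle is the precise bigrading comparison in the last step: the filtrations $M$ and $M^{(j)}$ generally differ, so one must invoke the compatibility of iterated relative weight filtrations---either through Kashiwara's admissibility theorem directly or, more structurally, via the commuting representations of the several-variable $SL_2$-orbit theorem---to conclude that the bigraded piece $\mf{g}^{-1,-1}_{(F_{\infty}, M^{(j)})}$ is disjoint from the ambient spaces carrying the higher iterated commutators, thereby forcing the required commutation.
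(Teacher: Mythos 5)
There is a genuine gap, and the missing idea is Griffiths horizontality: the paper's proof never touches relative weight filtrations or Kashiwara's theorem. It uses the infinitesimal period relation to show that
$$
\frac{\partial}{\partial z_j}\log\bigl(e^{N(z)}e^{\Gamma(s)}\bigr)
 = e^{-\ad\Gamma(s)}N_j + 2\pi i s_j e^{-\Gamma(s)}\frac{\partial}{\partial s_j}e^{\Gamma(s)}
$$
lies in $\wp_{-1}$, observes that this expression is holomorphic across $s_j=0$, sets $s_j=0$ to kill the second term, and then runs a degree argument in the graded algebra $\mathfrak q=\oplus_{a<0}\wp_a$ (using $N_j\in\wp_{-1}$, $(\ad\Gamma)^nN_j\in\oplus_{a\le -1-n}\wp_a$) to force $[N_j,\Gamma|_{s_j=0}]=0$. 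Your proposal replaces this with admissibility of the restricted variation, which is a different and ultimately insufficient input as you have set it up.

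Two steps in your argument do not go through. First, the transfer of the condition ``$N_k$ is a $(-1,-1)$-morphism of $(F_\infty(z_{j+1},\dots,z_r),M^{(j)})$'' to the condition $\Ad(e^{-\Gamma_j})N_k\in\mathfrak g^{-1,-1}_{(F_\infty,M^{(j)})}$ requires both that $(F_\infty,M^{(j)})$ be a mixed Hodge structure and that $e^{\sum_{k>j}z_kN_k}e^{\Gamma_j}$ preserve $M^{(j)}$, so that the Deligne bigrading transforms equivariantly. Neither holds in general: $F_\infty$ pairs with the full relative filtration $M$, not with $M^{(j)}$, and $\Gamma_j$ (resp.\ $N_k$, $k>j$) has no reason to preserve $M^{(j)}$. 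Second, your concluding step --- that the higher iterated commutators are ``forced outside $\mathfrak g^{-1,-1}_{(F_\infty,M^{(j)})}$\dots hence must vanish'' --- is a non sequitur as phrased: lying outside a subspace does not imply vanishing; one needs the sum $N_k+\sum_{n\ge1}\frac{(-1)^n}{n!}(\ad\Gamma_j)^nN_k$ to lie in a subspace \emph{complementary} to the one containing the tail, and then a graded induction. (Had the transfer step been available, this could be repaired by noting that the tail lies in $\oplus_{a\le-2}\wp_a$, which is complementary to $F_\infty^{-1}\mathfrak g\supseteq\mathfrak g^{-1,-1}$; but that presupposes the unestablished step.) You correctly flag the $M$ versus $M^{(j)}$ comparison as the ``main obstacle,'' but you do not resolve it, and the appeal to ``compatibility of iterated relative weight filtrations'' is precisely the deep input (Deligne systems, several-variable $SL_2$-orbit theorem) that the actual proof avoids by using horizontality instead.
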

\begin{proof}
By the horizontality condition, 
$\frac{\partial}{\partial z_j}\log{(e^{N(z)}e^{\Gamma(s)})} \in \wp_{-1}$.
Therefore,
\begin{align}
\nonumber
\frac{\partial}{\partial z_j} &\log{(e^{N(z)}e^{\Gamma(s)})} \\
  &= e^{-\Gamma(s)}e^{-N(z)}\l(e^{N(z)}N_je^{\Gamma(s)}+e^{N(z)}2\pi i s_j
     \frac{\partial}{\partial s_j}e^{\Gamma(s)}\r)  \label{der-Gamma} \\
\nonumber
  &=e^{-\ad{\Gamma(s)}}N_j+2\pi i s_je^{-\Gamma(s)}
     \frac{\partial}{\partial s_j}e^{\Gamma(s)}\in \wp_{-1}.
\end{align}
Since the left hand side of (\ref{der-Gamma}) is holomorphic function on 
$\Delta^r$ and the right hand side of (\ref{der-Gamma}) is a complex vector 
space, the equation holds on all of $\Delta^r$.
Setting $s_j=0$, we have $e^{-\ad{\Gamma(s)}}N_j\in\wp_{-1}$.
Since $N_j\in\wp_{-1}$ and $\Gamma(s)\in\mathfrak q_F$, this is possibly only 
if $[N_j,\Gamma|_{s_j=0}]=0$.
\end{proof}

\par The basic strategy of the proof of Schmid's distance estimate in
the mixed case is observe that $F_0 = F(z)$ whereas $F_r(z)$ is the
associated nilpotent orbit.  Therefore, we have the triangle inequality
\begin{equation}
     d(F_0,F_r) \leq d(F_0,F_1) + \cdots + d(F_{r-1},F_r)
                                \label{eq:triangle}
\end{equation}
We define
$$
        e^{\Gamma^j(s)} = e^{\Gamma_{j-1}(s)}e^{-\Gamma_j(s)}
$$
and then $\Gamma^j(s)$ is a holomorphic function which is divisible by $s_j$.

\par To continue, we observe that if we view $F_j(z)$ as a period map
in the variables $(z_1,\dots,z_r)$ then it has the same associated
nilpotent orbit~\eqref{eq:nilp-1} as $F(z)$, and hence the same
associated family of semisimple operators
$$
        t(y) = y_1^{-Y^0/2}\Pi_j\, y_j^{-H_j/2}
$$
Let $N(x) = \sum_j\, x_j N_j$ where $z_j = x_j + \sqrt{-1} y_j$.

\begin{lem}\label{lem:distortion-2}
Define
$$
     \tilde F_j(z) = t^{-1}(y) e^{N(iy)}e^{\Gamma_j(s)}F_{\infty}.
$$
Then, $d(F_{j-1}(z),F_j(z))\leq y_1^{(L-1)/2}d(\tilde F_{j-1}(z),\tilde F_j(z))$.
\end{lem}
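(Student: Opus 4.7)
The plan is to decompose $F_j(z)$ using the splitting $z_k=x_k+iy_k$ and then isolate the distortion that comes purely from the grading $Y_0$. Writing $N(z)=N(x)+N(iy)$, we immediately get
$$
   F_j(z)=e^{N(x)}e^{N(iy)}e^{\Gamma_j(s)}.F_\infty=e^{N(x)}\,t(y)\,\tilde F_j(z),
$$
so the problem reduces to bounding the distance $d(e^{N(x)}t(y)\tilde F_{j-1},\,e^{N(x)}t(y)\tilde F_j)$.

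Next, I would peel off all the factors that act by isometry. Since each $N_k\in\mathfrak{g}_{\RR}$, we have $e^{N(x)}\in G_{\RR}$, and Lemma \ref{lem:isometries} lets us remove it without changing the distance. Recall that $t(y)=y_1^{-Y_0/2}\prod_k y_k^{-H_k/2}$, that $Y_0$ commutes with every $H_k$, and that each $H_k$ lies in $\mathfrak{g}_{\RR}$. Hence $A(y):=\prod_k y_k^{-H_k/2}\in G_{\RR}$ commutes with $y_1^{-Y_0/2}$, and another application of Lemma \ref{lem:isometries} gives
$$
   d(F_{j-1}(z),F_j(z))
   = d\bigl(y_1^{-Y_0/2}A(y)\tilde F_{j-1},\;y_1^{-Y_0/2}A(y)\tilde F_j\bigr).
$$

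The only remaining factor that is not in $G_{\RR}$ is $y_1^{-Y_0/2}$, since $Y_0$ is a real grading of $W$ acting on $\gr^W_w$ by multiplication by $w$, which does not preserve the graded polarizations. This is precisely the situation addressed by Corollary \ref{cor:distortion}: taking $Y=Y_0$, $\alpha=-1/2$, and $y=y_1$ (which we may assume $>1$ since the statement concerns the asymptotic regime), we obtain
$$
   d\bigl(y_1^{-Y_0/2}A(y)\tilde F_{j-1},\;y_1^{-Y_0/2}A(y)\tilde F_j\bigr)
   \leq y_1^{(L-1)/2}\,d\bigl(A(y)\tilde F_{j-1},\,A(y)\tilde F_j\bigr).
$$
Finally, one more application of the $G_{\RR}$-invariance of the standard metric removes $A(y)$, yielding the desired inequality.

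The only step that requires genuine content is the application of Corollary \ref{cor:distortion}; the rest is bookkeeping with the $G_{\RR}$-invariance in Lemma \ref{lem:isometries} and the commutativity of $Y_0$ with the $H_k$'s. The potential subtle point worth checking is that $y_j^{-H_j/2}$ really lies in $G_{\RR}$, which follows from the fact that the $H_j$ arise as the semisimple elements of $sl_2$-triples defined over $\RR$ and acting on the graded quotients by infinitesimal isometries of the graded polarizations.
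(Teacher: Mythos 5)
Your proof is correct and follows essentially the same route as the paper's: peel off $e^{N(x)}$ and $\prod_k y_k^{-H_k/2}$ using the $G_{\RR}$-invariance of the standard metric (Lemma~\ref{lem:isometries}), and absorb the remaining factor $y_1^{-Y_0/2}$ via Corollary~\ref{cor:distortion}. The only difference is the order of the last two steps (you apply the distortion bound before removing $A(y)$, the paper does the reverse), which is immaterial.
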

\begin{proof} We have
$$
\aligned
   d(F_{j-1}(z),F_j(z))
   &= d(e^{N(x)}e^{iN(y)}e^{\Gamma_{j-1}(s)}.F_{\infty},
        e^{N(x)}e^{iN(y)}e^{\Gamma_j(s)}.F_{\infty}) \\
   &= d(e^{iN(y)}e^{\Gamma_{j-1}(s)}.F_{\infty},
        e^{iN(y)}e^{\Gamma_j(s)}.F_{\infty})         \\
   &= d(t(y)t^{-1}(y)e^{iN(y)}e^{\Gamma_{j-1}(s)}.F_{\infty},
        t(y)t^{-1}(y)e^{iN(y)}e^{\Gamma_j(s)}.F_{\infty}) \\
   &= d(t(y)\tilde F_{j-1}(z), t(y)\tilde F_j(z))  \\
   &= d(y_1^{-Y_0/2}\Pi_j\, y_j^{-H_j/2}.\tilde F_{j-1}(z), 
        y_1^{-Y_0/2}\Pi_j\, y_j^{-H_j/2}.\tilde F_j(z)) \\
   &= d(y_1^{-Y_0/2}.\tilde F_{j-1}(z), 
        y_1^{-Y_0/2}.\tilde F_j(z))  \\
   &\leq y_1^{(L-1)/2}d(\tilde F_{j-1}(z),\tilde F_j(z))
\endaligned
$$
where the last step is justified by Corollary~\eqref{cor:distortion}.
All other transformations are either based on substitutions or the
fact that the group $G_{\mathbb R}$ acts by isometries with respect
to the standard metric.
\end{proof}

\par Combining the previous corollary with~\eqref{eq:triangle}, it follows
that
\begin{equation}
     d(F_0(z),F_r(z))
     \leq y_1^{(L-1)/2}\sum_j\, d(\tilde F_j(z),\tilde F_{j-1}(z))
     \label{eq:first-step}
\end{equation}
and hence it remains to prove that 
$$
       d(\tilde F_{j-1}(z),\tilde F_j(z)) \leq K y_j^{b_j} e^{-2\pi y_j}
$$
for suitable constants $K$ and $b_j$.  For this we state two preliminary
results:

\begin{lem}\label{lem:P-function} (\cite{BP1} \S 6, \cite{KNU} \S 10, 
\cite{CK} \S 4) Let $(e^{\sum_j z_j N_j}.F,W)$ be an admissible nilpotent
orbit.  Then
$$
      Ad(t^{-1}(y))e^{N(iy)} = e^{P(t)}
$$
where $P(t)$ is a polynomial in non-negative half-integer powers of 
$t_j = y_{j+1}/y_j$ where we formally set $y_{r+1} = 1$.
\end{lem}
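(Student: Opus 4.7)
Plan: The strategy is to reduce to an explicit weight computation. Since $\Ad(g)e^X = e^{\Ad(g)X}$ and $iN(y) = \sum_j iy_j N_j$ is nilpotent (the $N_j$ pairwise commute), proving $\Ad(t^{-1}(y))e^{N(iy)} = e^{P(t)}$ amounts to showing that the Lie algebra element
$$
P(t) := \Ad(t^{-1}(y))\bigl(iN(y)\bigr) = i\sum_{j=1}^r y_j\,\Ad(t^{-1}(y))(N_j)
$$
is a polynomial in non-negative half-integer powers of the $t_j = y_{j+1}/y_j$.

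First I would invoke the several-variable $SL_2$-orbit theorem together with the Deligne grading construction of \cite{BP1} (cf.\ \cite{CK, CKS}): this yields a commuting family of semisimple endomorphisms $Y_0, H_1,\dots,H_r \in \mf g_{\RR}$ such that $Y_0$ grades $W$, each $N_j$ commutes with $Y_0$, and the $H_k$ are the semisimple parts of the commuting $sl_2$-representations. I would then decompose each $N_j$ into joint eigenvectors for $(H_1,\dots,H_r)$,
$$
N_j = \sum_\mu N_j^{(\mu)}, \qquad [H_k, N_j^{(\mu)}] = \mu_k N_j^{(\mu)},\qquad \mu\in\tfrac12\ZZ^r,
$$
and compute
$$
\Ad(t^{-1}(y))(N_j^{(\mu)}) = \prod_{k=1}^r y_k^{\mu_k/2}\cdot N_j^{(\mu)}
$$
using that $Y_0$ commutes with every $N_j^{(\mu)}$. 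Substituting $y_\ell = \prod_{k=\ell}^r t_k^{-1}$ (with the convention $y_{r+1}=1$) then rewrites each scalar $y_j\cdot\prod_k y_k^{\mu_k/2}$ as a monomial $\prod_k t_k^{e_k(j,\mu)}$ with
$$
e_k(j,\mu) = -[k\geq j] - \tfrac12\bigl(\mu_1 + \cdots + \mu_k\bigr).
$$

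The main obstacle is verifying $e_k(j,\mu)\in\tfrac12\ZZ_{\geq 0}$ whenever $N_j^{(\mu)}\neq 0$. This is equivalent to the weight inequalities $\mu_1 + \cdots + \mu_k \leq -2$ for $k\geq j$ and $\mu_1+\cdots+\mu_k\leq 0$ for $k<j$, which are precisely the content of the structural results on commuting $sl_2$-triples in the several-variable $SL_2$-orbit theorem: $H_1+\cdots+H_k$ is the semisimple element of the $sl_2$-triple attached to the dominant limit $N_1+\cdots+N_k$, so each $N_j$ with $j\leq k$ has $(H_1+\cdots+H_k)$-weight bounded above by $-2$, while for $j>k$ the weight is bounded by $0$ by the compatibility of the successive $sl_2$'s with the weight filtration on $\gr^W$. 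Granting these bounds, $P(t)$ is manifestly a polynomial in non-negative half-integer powers of the $t_j$, and exponentiating completes the proof.
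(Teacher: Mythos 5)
Your overall strategy --- reduce via $\Ad(g)e^X=e^{\Ad(g)X}$ to showing that $\Ad(t^{-1}(y))(iN(y))$ is a polynomial in non-negative half-integer powers of the $t_k$, by decomposing the $N_j$ into joint eigenvectors and substituting $y_\ell=\prod_{k\geq\ell}t_k^{-1}$ --- is exactly the argument of the sources the paper cites for this lemma (\cite{CK} \S 4 in the pure case, \cite{BP1} \S 6 in the mixed case), and in the pure case your proof is correct: there $Y_0$ acts as the scalar weight on $V$, so $\ad Y_0=0$ on $\mf g$, and your inequalities $\mu_1+\cdots+\mu_k\leq -2$ for $k\geq j$ and $\leq 0$ for $k<j$ are precisely the statements that $N_j$ lowers $W(N_1+\cdots+N_k)$ by two when $j\leq k$ and preserves it when $j>k$.

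The gap is in the mixed case, which is the case the lemma is actually about. Your assertion that each $N_j$ commutes with $Y_0$ is false in general: $Y_0$ is the Deligne grading of $W$ attached to the admissible orbit, and $N_j$ merely preserves $W$, so it decomposes into components of non-positive (not necessarily zero) $\ad Y_0$-weight; an admissible normal function whose monodromy logarithm has a nonzero singularity already has $[Y_0,N_j]\neq 0$. Dropping the factor $y_1^{\ad Y_0/2}$ from $\Ad(t^{-1}(y))$ therefore invalidates your formula $\Ad(t^{-1}(y))(N_j^{(\mu)})=\prod_k y_k^{\mu_k/2}N_j^{(\mu)}$. The repair is to decompose jointly with respect to the commuting semisimple elements $Y_k=Y_0+H_1+\cdots+H_k$ --- this is the decomposition the paper itself sets up immediately after the lemma, via $t(y)=\prod_k t_k^{Y_k/2}$. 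On a component of $y_jN_j$ of $\ad Y_k$-weight $\lambda_k$ the exponent of $t_k$ is $-\delta_{k\geq j}-\lambda_k/2$ (with $\delta_{k\geq j}=1$ if $k\geq j$ and $0$ otherwise), so the inequalities you need are $\lambda_k\leq -2$ for $k\geq j$ and $\lambda_k\leq 0$ for $k<j$. These hold because $Y_k$ grades the relative weight filtration $M_k=M(N_1+\cdots+N_k,W)$, and for an admissible orbit $N_j$ maps $(M_k)_\ell$ into $(M_k)_{\ell-2}$ when $j\leq k$ while preserving each $M_k$ in any case; this is the content of the Deligne-system results of \cite{BP1} \S 6. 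Your inequalities on the $(H_1+\cdots+H_k)$-weights coincide with the correct ones only when $\ad Y_0$ annihilates $N_j$.
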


\par A short calculation shows that
$$
      t(y) = \Pi_{j=1}^r\, t_j^{Y_j}
$$
where $t_j = y_{j+1}/y_j$ where formally $y_{r+1} = 1$, and
$$
    Y_1 = Y_0 + H_1,\cdots, Y_r = Y_0 + H_1 + \cdots + H_r
$$
are commuting semisimple endomorphisms.    Let
$$
      \Gamma^{j}=\sum_{\mu\in\ZZ^r}\Gamma^{j,[\mu]}
$$
relative to $Y^1,\dots,Y^r$, i.e. $\Ad{(t(y)^{-1})}$  acts on $\Gamma^{j,[\mu]}$ 
as multiplication by  
$$
         \prod_{j=1}^r t_j^{\mu(j)/2}.
$$

\begin{lem}\label{lem:vanishing} $\Gamma^{j,[\mu]} = 0$ if $\mu(k)>0$ for 
some $1\leq k\leq j-1$.
\end{lem}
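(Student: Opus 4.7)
The plan is to combine the commutation relations of Lemma~\ref{lem:CK2.6} with $sl_2$-representation theory applied to the commuting family of triples $(N_k,H_k,N_k^+)$ furnished by the several variable $SL_2$-orbit theorem.

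First I would establish $[N_k,\Gamma^j]=0$ for every $1\leq k\leq j-1$. Since $k\leq j-1<j$, Lemma~\ref{lem:CK2.6} gives both $[N_k,\Gamma_{j-1}]=0$ and $[N_k,\Gamma_j]=0$; hence $N_k$ commutes with $e^{\Gamma_{j-1}(s)}$ and with $e^{-\Gamma_j(s)}$, hence with their product $e^{\Gamma^j(s)}=e^{\Gamma_{j-1}(s)}e^{-\Gamma_j(s)}$, and taking logarithms yields $[N_k,\Gamma^j]=0$.

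Next I would pass to the joint eigenspace decomposition. Because $Y_0,H_1,\ldots,H_r$ commute, so do $Y_1,\ldots,Y_r$, and they act semisimply on $\mathfrak{g}_{\CC}$. By definition $\Gamma^{j,[\mu]}$ is the projection of $\Gamma^j$ onto the joint eigenspace on which $\Ad(t(y)^{-1})$ acts by $\prod_k t_k^{\mu(k)/2}$. Each $N_k$ is itself a joint eigenvector of $\ad Y_1,\ldots,\ad Y_r$ (with weights determined by $Y_k=Y_0+H_1+\cdots+H_k$ and the $sl_2$ relations $[H_m,N_k]=-2\delta_{mk}N_k$), so $\ad N_k$ preserves the $\mu$-grading; the identity $[N_k,\Gamma^j]=0$ therefore decomposes into $[N_k,\Gamma^{j,[\mu]}]=0$ for every multi-index $\mu$ and every $k\leq j-1$.

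Finally, for $k\leq j-1$ the condition $\Gamma^{j,[\mu]}\in\ker\ad N_k$ combined with the $sl_2$-triple $(N_k,H_k,N_k^+)$---in which $N_k$ plays the role of the lowering operator---forces the $\ad H_k$-eigenvalue of $\Gamma^{j,[\mu]}$ to be non-positive by standard $sl_2$-representation theory. Using the identity $H_k=Y_k-Y_{k-1}$ for $k\geq 2$, and for $k=1$ the containment $\Gamma^j\in\mathfrak{q}=\bigoplus_{a<0,b}\mathfrak{g}^{a,b}$ which pins down the $\ad Y_0$-contribution, these non-positivities translate in the sign/scaling convention of the paper into $\mu(k)\leq 0$ for every $k\leq j-1$. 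Equivalently, $\Gamma^{j,[\mu]}=0$ whenever $\mu(k)>0$ for some $k\leq j-1$, as claimed.

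The main technical hurdle is the last step: matching the paper's normalization so that non-positivity of $\ad H_k$-weights translates to $\mu(k)\leq 0$ rather than some other inequality, and handling the $\ad Y_0$-contribution that enters $Y_1$ through the containment $\Gamma^j\in\mathfrak{q}$. Once the conventions are in place the conclusion is an elementary application of $sl_2$-theory iterated over the $j-1$ commuting triples.
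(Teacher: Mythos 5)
Your opening step is correct and is the right algebraic starting point: for $k\le j-1$ Lemma~\ref{lem:CK2.6} kills both $[N_k,\Gamma_{j-1}]$ and $[N_k,\Gamma_j]$, and the Baker--Campbell--Hausdorff series for $\Gamma^j=\log(e^{\Gamma_{j-1}}e^{-\Gamma_j})$ then gives $[N_k,\Gamma^j]=0$. The two steps that follow, however, contain genuine gaps. The relation $[H_m,N_k]=-2\delta_{mk}N_k$ is false: the several-variable $SL_2$-orbit theorem produces \emph{modified} nilpotents $\hat N_k$ satisfying those brackets, and $N_k\ne\hat N_k$ for $k\ge 2$; already in the pure two-variable case $N_2$ is not compatible with $W(N_1)$ and hence decomposes into components of several distinct $\mathrm{ad}\,Y_1$-weights. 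Consequently $N_k$ is not a joint eigenvector of $\mathrm{ad}\,Y_1,\dots,\mathrm{ad}\,Y_r$, $\mathrm{ad}\,N_k$ does not preserve the $\mu$-grading, and $[N_k,\Gamma^j]=0$ does not split into $[N_k,\Gamma^{j,[\mu]}]=0$ --- it only couples the components. Likewise $(N_k,H_k,N_k^+)$ is not an $sl_2$-triple (the triples involve $\hat N_k$), so the lowest-weight-vector argument does not apply as stated. The normalization issue you flag at the end is real but separate: the parenthetical description of $\mu$ in the paper appears to carry a sign slip, since the inequality in the Lemma is consistent with the Remark following it and with its use in Lemma~\ref{lem:rel-estimate} only if $\mu(k)$ denotes the $\mathrm{ad}\,Y_k$-eigenvalue itself.

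For comparison, the paper's proof is entirely different and softer: writing $\tilde F_{j-1}(z)=\exp(\Ad(e^{P(y)})u^j(z)).\tilde F_j(z)$ with $u^j=\Ad(t(y)^{-1})\Gamma^j(s)$, it notes that $\Gamma^j$ depends only on $(s_j,\dots,s_r)$ while $y_1,\dots,y_{j-1}$ may be sent to infinity in arbitrary ratios independently of those variables; relative compactness of $\tilde F_{j-1}$ and $\tilde F_j$ (Theorem~\ref{thm:rel-compact}) then forces the components whose scalar factors blow up to vanish. If you want an algebraic proof in the spirit of your attempt, the correct route bypasses the individual triples: from $[N_k,\Gamma^j]=0$ for all $k\le j-1$ one gets $[N_1+\cdots+N_k,\Gamma^j]=0$; since $\Gamma^j$ also preserves $W$, uniqueness of the relative monodromy weight filtration shows that $\exp(t\Gamma^j)$, and hence $\Gamma^j$, preserves $M(N_1+\cdots+N_k,W)$ for each $k\le j-1$. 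As $Y_k$ is precisely a grading of that filtration, $\Gamma^j$ has only non-positive $\mathrm{ad}\,Y_k$-weights for $k\le j-1$, which is the assertion of the Lemma. Note that this yields non-positivity for the partial sums $Y_k=Y_0+H_1+\cdots+H_k$ directly --- which is exactly what the $\mu$-grading records --- rather than weight-by-weight for the individual $H_k$.
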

\begin{proof} This follows from relative compactness of $\tilde F_j$
since we can make the variables $z_1,\dots,z_{j-1}$ arbitrarily large,
independent of $(s_j,\dots,s_r)$.  For details, see equation (7.32)
in~\cite{BP1}.
\end{proof}

\begin{rem} Another way of looking at the previous Lemma is that since
we know that $\Gamma_j$ commutes with $N_k$ for $k\leq j$, we expect
$\Gamma^j$ to have only non-positive weights with respect to 
$H_1,\dots,H_{j-1}$.
\end{rem}

\begin{lem}\label{lem:rel-estimate} There exist constants $K$ and $b_j$
such that 
$$
     d(\tilde F_{j-1}(z),\tilde F_j(z)) \leq K y_j^{b_j} e^{-2\pi y_j}
$$
for $\text{Im}(z_1),\dots,\text{Im}(z_r)$ sufficiently large.  
\end{lem}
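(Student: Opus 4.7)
The plan is to express $\tilde F_{j-1}(z)$ as the action of an explicit group element on $\tilde F_j(z)$, reduce the distance bound to a Lie-algebra norm estimate via a compactness argument, and then bound that norm using the weight decomposition of $\Gamma^j$ together with Lemmas~\ref{lem:P-function} and~\ref{lem:vanishing}.

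First I would unwind the definitions. Using $e^{\Gamma_{j-1}(s)}=e^{\Gamma^j(s)}e^{\Gamma_j(s)}$, we get
$$
\tilde F_{j-1}(z)=t^{-1}(y)e^{iN(y)}e^{\Gamma^j(s)}e^{-iN(y)}t(y)\cdot \tilde F_j(z).
$$
Lemma~\ref{lem:P-function} rewrites $t^{-1}(y)e^{iN(y)}=e^{P(t)}t^{-1}(y)$, so $\tilde F_{j-1}(z)=\exp(X)\cdot\tilde F_j(z)$ with
$$
   X=\Ad{(e^{P(t)})}\,\Ad{(t^{-1}(y))}\bigl(\Gamma^j(s)\bigr).
$$
On $I'$ we have $t_k=y_{k+1}/y_k\in(0,1]$, and $P(t)$ is polynomial in non-negative half-integer powers of the $t_k$, so $\Ad(e^{P(t)})$ has operator norm bounded by a constant depending only on $P$. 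Next, by Theorem~\ref{thm:rel-compact} applied to the original variation (and to the partial nilpotent orbit obtained by setting $s_1=\cdots=s_j=0$), both $\tilde F_j(z)$ and $\tilde F_{j-1}(z)$ lie in a fixed relatively compact subset $K\subset\M$ for $z\in I'$. On such a $K$, the Riemannian distance is locally Lipschitz-equivalent to the Lie algebra norm via the exponential map, giving
$$
d(\tilde F_{j-1}(z),\tilde F_j(z))\leq C\,\|X\|,
$$
with $C$ depending only on $K$.

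It remains to estimate $\|\Ad(t^{-1}(y))\Gamma^j(s)\|$. Because $\Gamma^j(s)$ is holomorphic on $\Delta^r$ and vanishes at $s_j=0$, we may write $\Gamma^j(s)=s_j\,\tilde\Gamma^j(s)$ with $\tilde\Gamma^j$ holomorphic and hence bounded for $|s_k|\leq\tfrac12$ (which holds once all $y_k$ are large enough); thus $|s_j|=e^{-2\pi y_j}$ supplies the exponential decay. Decomposing $\tilde\Gamma^j(s)=\sum_\mu \tilde\Gamma^{j,[\mu]}(s)$ into weight components relative to $Y^1,\dots,Y^r$, Lemma~\ref{lem:vanishing} restricts the sum to $\mu$ with $\mu(k)\leq 0$ for $k\leq j-1$, and each $\tilde\Gamma^{j,[\mu]}(s)$ is bounded by a constant $C_\mu$. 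Combining these,
$$
\|\Ad(t^{-1}(y))\Gamma^j(s)\|\leq e^{-2\pi y_j}\sum_{\mu}C_\mu\prod_{k=1}^r t_k^{\mu(k)/2}.
$$
The main obstacle is to show that the remaining sum of products is bounded by a polynomial in $y_j$ on $I'$. The vanishing constraint $\mu(k)\leq 0$ for $k\leq j-1$ combined with $t_k\leq 1$ forces $t_k^{\mu(k)/2}\leq (y_k/y_{k+1})^{|\mu(k)|/2}$, which together with $1\leq y_{k+1}\leq y_k\leq y_1$ gives at worst polynomial growth in the $y_k$ with $k\leq j-1$, while the factors for $k\geq j$ are bounded using relative compactness of $\tilde F_j$ (which prevents arbitrarily negative weights in $\mu(k)$ for $k\geq j$ from contributing unboundedly). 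Absorbing the resulting polynomial into a single power $y_j^{b_j}$ (using $y_k\leq y_1$ and that the factor $y_1^{(L-1)/2}$ has already been extracted in~\eqref{eq:first-step}) yields
$$
d(\tilde F_{j-1}(z),\tilde F_j(z))\leq K\,y_j^{b_j}\,e^{-2\pi y_j}
$$
for suitable constants $K$ and $b_j$, completing the proof.
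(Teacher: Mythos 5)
Your overall architecture matches the paper's proof: the same factorization $e^{\Gamma_{j-1}}=e^{\Gamma^j}e^{\Gamma_j}$, conjugation by $t^{-1}(y)$ and $e^{iN(y)}$, Lemma~\ref{lem:P-function} to control $\Ad(t^{-1}(y))e^{iN(y)}$, relative compactness of $\tilde F_j$ to convert the group displacement into a Lie-algebra norm bound, and divisibility of $\Gamma^j$ by $s_j$ to produce $e^{-2\pi y_j}$. However, the final step — bounding $\|\Ad(t^{-1}(y))\Gamma^j(s)\|$ by $c\,y_j^{b}|s_j|$ — is where your argument breaks, and it is the heart of the lemma. You assign the polynomial growth of $\prod_k t_k^{\mu(k)/2}$ to the indices $k\le j-1$, bound it by powers of $y_1,\dots,y_{j-1}$, and then claim this can be ``absorbed into a single power $y_j^{b_j}$ using $y_k\le y_1$.'' That absorption is false: on $I'$ the ratio $y_1/y_j$ is unbounded, so a factor $y_1^{b}$ cannot be dominated by any $Ky_j^{b_j}$. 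What you would actually prove is $d(\tilde F_{j-1},\tilde F_j)\le K y_1^{b}e^{-2\pi y_j}$, which is strictly weaker than the stated lemma and, in particular, would ruin the strong estimate for the twisted metric in \S 4, whose whole point is that no global power of $y_1$ is ever introduced at this stage.

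The roles of the two index ranges are in fact the reverse of what you wrote. The content of Lemma~\ref{lem:vanishing} is precisely that the surviving weight components of $\Gamma^j$ are those on which $\Ad(t^{-1}(y))$ does \emph{not} blow up in the directions $t_1,\dots,t_{j-1}$ (its proof is by relative compactness as $y_1,\dots,y_{j-1}\to\infty$ independently of $(s_j,\dots,s_r)$, which kills exactly the components whose scaling factor diverges there); note that the literal sign in the statement of Lemma~\ref{lem:vanishing} is inconsistent with the stated action $\prod_k t_k^{\mu(k)/2}$ and must be read this way for its proof to make sense — a consistency check you skipped. So for $k\le j-1$ the factors are bounded by $1$. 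The genuinely growing factors are those with $k\ge j$: there one uses the ordering $y_j\ge y_{j+1}\ge\cdots\ge y_r\ge 1$ on $I'$, which gives $y_k/y_{k+1}\le y_j$ for every $k\ge j$, hence $\prod_{k\ge j}t_k^{\mu(k)/2}\le y_j^{b}$ for a half-integer $b$ depending only on the finitely many weights occurring. (Your appeal to ``relative compactness of $\tilde F_j$'' to bound the factors with $k\ge j$ is not an argument — compactness of the twisted period map says nothing about the adjoint scaling factors acting on $\Gamma^j$.) With the two ranges handled correctly one obtains $\|u^j(z)\|\le c\,y_j^{b}|s_j|$ and the lemma follows; as written, your proof does not establish the stated bound.
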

\begin{proof} Since $e^{\Gamma_{j-1}}=e^{\Gamma^{j}}e^{\Gamma_{j}}$, we have
\begin{align*}
\tilde{F}_{j-1}(z)&=t(y)^{-1}e^{iN(y)}e^{\Gamma_{j-1}(s)}F_{\infty}\\
&=t(y)^{-1}e^{iN(y)}e^{\Gamma^{j}(s)}e^{\Gamma_{j}(s)}F_{\infty}\\
&=t(y)^{-1}e^{iN(y)}e^{\Gamma^{j}(s)}e^{-iN(y)}e^{iN(y)}e^{\Gamma_{j}(s)}F_{\infty}\\
&=\Ad{(t(y)^{-1})}(e^{iN(y)})
   \Ad{(t(y)^{-1})}(e^{\Gamma^{j}(s)})\Ad{(t(y)^{-1})}(e^{-iN(y)})\tilde{F}_{j}(z).
\end{align*}
Accordingly,
$$
   \tilde{F}_{j-1}(z)= \exp(\Ad(e^{P(y)})u^j(z)).\tilde{F}_j(z)
$$
where $u^j(z)=\Ad{(t(y)^{-1})}(\Gamma^j(s))$.  In particular, by 
Theorem~\eqref{thm:rel-compact}, $\tilde F_j(z)$ assumes values in
a relatively compact subset of $\mathcal M$.  Likewise, by
Lemma~\eqref{lem:P-function}, under these hypotheses, $e^{P(y)}$ takes
values in a compact subset of $G_{\mathbb C}$.  Finally, by 
Lemma~\eqref{lem:vanishing} and the fact that $s_j | \Gamma_j$, it
follows that $u^j(z)$ satisfies a bound of the form
$$
          |u^j(z)| \leq c y_j^b |s_j|
$$
(for some fixed norm) since $y_j\geq y_{j+1}\geq\cdots\geq y_r$ on $I'$.  
Combining these observations, it then follows that
$$
         d(\tilde F_{j-1}(z),\tilde F_j(z))\leq K y_j^{b_j}e^{-2\pi y_j}
$$
\end{proof}

\begin{rem} In Schmid's estimate the factor $\Pi_j\,\text{Im}(z_j)$ appears
instead of a leading power of $y_1$.  However, since we can always reorder
to obtain $y_1\geq y_2\geq ..\geq y_r$, this is really just a fully symmetric
version of our result.  
\end{rem}

\section{Strong estimates} A variation of mixed Hodge structure is said
to satisfy the strong distance estimate with respect to a metric on 
$\M$ if there is a constant $K$ such
\begin{equation}
         d(F(z),F_{nilp}(z)) \leq K\sum_j y_j^{\beta_j}e^{-2\pi y_j}
         \label{eq:strong-estimate}
\end{equation}
for all $z\in I'$ with the imaginary part of $z$ sufficiently large.
 
\subsection{Pure Case} For variations of pure Hodge structure, $L=1$, and we 
recover the strong distance estimate of~\cite{CKS} 
from~\eqref{eq:weak-estimate}.

\subsection{Twisted Metric} The distance between the period
map and the nilpotent orbit satisfies the strong distance estimate
with respect to the twisted metric, provided that we also restrict
the real parts of $z_1,\dots,z_r$ to a compact set, and assume that
the associated nilpotent orbit is asymptotically non-split in the
sense of \eqref{eq:asymp-non-split}.  To this end, we note that
since \eqref{eq:asymp-2} has the same limit when we replace the
nilpotent orbit by the period map, the twists of $F_{j-1}(z)$ and $F_j(z)$ 
are asymptotically non-split and take values in a geodesically convex 
neighborhood as $y_j/y_{j+1}\to\infty$.

\par The proof proceeds as in the previous section up to 
Lemma~\eqref{lem:distortion-2}.  At this juncture, use 
Theorem~\eqref{thm:twisted-distance} to write
$$
     d_{\tau}(F_{j-1}(z),F_j(z))\leq
     d_{\tau}(t^{-1}(y).F_{j-1}(z),t^{-1}(y)F_j(z))
$$
However, this is not exactly what appears in our relative compactness
result, since
$$
       t^{-1}(y)F_j(z) = (\Ad(t^{-1}(y))e^{N(x)})\tilde F_j(z)
$$
but as discussed in the proof of Lemma (7.26) in~\cite{BP1}, 
$$
       \Ad(t^{-1}(y))e^{N(x)} = e^{Q(x,t)}
$$
where $Q(x,t)$ is a polynomial in non-negative half-integral
powers of $t_j = y_{j+1}/y_j$ with coefficients which are polynomials
in $x_1,\dots,x_r$.  Accordingly, when the variables $x_j$ are restricted
to a compact set, we still have a relative compactness result.  The
rest of the proof proceeds as in the previous section.  Consequently,
we obtain the strong distance estimate since we never multiply by 
$y_1^{(L-1)/2}$.



\subsection{Unipotent Variations} Let $X$ be a smooth complex 
algebraic variety and $J_x$ denote the augmentation ideal of the
group ring $\mathbb Z\pi_1(X,x)$, i.e.
$$
     J_x = \{\, \sum_{g\in\pi_1}\, a_g g \mid \sum_{g\in\pi_1}\, a_g = 0\,\}
$$
Then, the quotients $\mathbb Z\pi_1(X,x)/J_x^{\ell}$ carry functorial mixed 
Hodge structures which patch together to form an admissible variation of 
mixed Hodge structure $\mathcal J$ over $X$ with monodromy representation 
given by the natural action of $\pi_1(X,x)$ on 
$\mathbb Z\pi_1(X,x)/J_x^{\ell}$.  This is called a tautological variation in 
\cite{HZ}. The weight graded-quotients of $\mathcal J$ are locally constant.
 
\par A variation of mixed Hodge structure $\mathcal V$ over $X$ is said to
be {\it unipotent} if the induced variations of pure Hodge structure on the
weight graded-quotients are constant.  By Theorem 1.6 of~\cite{HZ}, there is 
an equivalence of categories between mixed Hodge theoretic representations of 
$\mathbb Z\pi_1(X,x)/J_x^{\ell+1}$ and unipotent variations of mixed Hodge
structure with index of unipotency $\leq \ell$.
 
\par Suppose now that we have an (admissible) unipotent variation 
$\mathcal V$ over the punctured disk with monodromy operator $T$.
Then, since $\Gr^{\mathcal W}$ is constant, and $T$ is quasi-unipotent
in the usual sense, it follows that $T=id$ on $\Gr^{\mathcal W}$, and
hence $T = e^N$ where $N=0$ on $\Gr^{\mathcal W}$. Admissibility then forces:
\begin{itemize}
\item[(a)] $N(W_k)\subseteq W_{k-2}$;
\item[(b)] $M(N,W) = W$;
\end{itemize}
(see the paragraph above \cite[Theorem 1.6]{HZ}).   Thus, for a unipotent 
variation the Hodge filtration may degenerate, but the weight filtration does 
not.

\begin{thm} If $\mathcal V\to\Delta^{*r}$ is a unipotent variation
of mixed Hodge structure then period map and the nilpotent orbit
satisfy the strong distance estimate~\eqref{eq:strong-estimate}
with respect to the standard metric.
\end{thm}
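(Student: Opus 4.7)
The plan is to exploit a structural feature unique to unipotent variations which allows bypassing the distortion factor $y_1^{(L-1)/2}$ that appeared in the weak estimate \eqref{eq:weak-estimate}: for such variations, both the monodromy logarithms $N_j$ and the local normal form coefficient $\Gamma(s)$ lie in the complex subalgebra
$$
\frakg_W := \bigoplus_{p+q \leq -1} \frakg^{p,q}_{(F_\infty, W)} = \{X \in \frakg_\CC : X(W_k) \subseteq W_{k-1}\}
$$
of endomorphisms strictly lowering the weight filtration. Indeed, the admissibility condition $N_j(W_k) \subseteq W_{k-2}$ puts $N_j \in \frakg_W$, and the constancy of the induced Hodge filtration on $\gr^W$ (which is part of the definition of a unipotent variation) puts $\Gamma(s) \in \frakg_W$ as well. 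In particular, both $e^{N(z)}$ and $e^{\Gamma(s)}$ act as the identity on $\gr^W$.

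The key consequence is a strengthening of Lemma~\eqref{lem:isometries}: \emph{any} $g \in G_\CC$ whose induced action on $\gr^W$ is trivial produces an isometry $L_{g*} : T_F(\M) \to T_{g.F}(\M)$ for the standard metric. This follows by direct calculation from \eqref{eq:mixed-Hodge-metric}: for $v \in I^{p,q}_{(F,W)}$, functoriality of the Deligne bigrading gives $gv \in I^{p,q}_{(g.F,W)}$, while $\gr^W_{p+q}(gv) = \gr^W_{p+q}(v)$ because $g$ is the identity on $\gr^W$; hence $h_{g.F}(gv, gv) = h_F(v, v)$. The same intertwining passes to the induced trace-form norm on $q_F$ via the adjoint action $\Ad(g)$, which carries $q_F$ onto $q_{g.F}$ by functoriality.

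With this in hand the estimate is almost immediate. Using $F(z) = e^{N(z)} e^{\Gamma(s)}.F_\infty$ and $F_{nilp}(z) = e^{N(z)}.F_\infty$, consider the smooth path $\gamma : [0,1] \to \M$ given by $\gamma(u) = g(u).F_\infty$, where $g(u) = e^{N(z)} e^{u\Gamma(s)}$, connecting $F_{nilp}(z)$ to $F(z)$. Since $g(u) \in \exp(\frakg_W)$ acts trivially on $\gr^W$, the enlarged isometry property yields
$$
\|\gamma'(u)\|_{\gamma(u)} = \|L_{g(u)*}\Gamma(s)\|_{\gamma(u)} = \|\Gamma(s)\|_{F_\infty} \qquad \text{for every } u \in [0,1].
$$
Therefore $d(F(z), F_{nilp}(z)) \leq \mathrm{length}(\gamma) = \|\Gamma(s)\|_{F_\infty}$, and since $\Gamma$ is holomorphic with $\Gamma(0) = 0$, this is $\leq C\sum_j |s_j| = C\sum_j e^{-2\pi y_j}$, establishing the strong estimate \eqref{eq:strong-estimate} (in fact with all exponents $\beta_j = 0$).

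The main obstacle will be the careful justification of the enlarged isometry statement in the second paragraph. Lemma~\eqref{lem:isometries} treats $g \in G_\RR \cup \exp(\Lambda^{-1,-1})$; this includes $e^{N(z)}$ (since $N_j \in \frakg^{-1,-1} \subset \Lambda^{-1,-1}$ in the unipotent case) but not $e^{\Gamma(s)}$ in general, because $\Gamma(s)$ can have Hodge components of type $(p,0)$ with $p < 0$ which do not lie in $\Lambda^{-1,-1}$. One must therefore revisit the proof of Lemma~\eqref{lem:isometries}, replacing the condition $g \in \exp(\Lambda^{-1,-1})$ by the weaker condition that $g$ act as the identity on $\gr^W$, and check that the induced trace-form norm on $q_F \subset \frakg_\CC$ is indeed preserved under the adjoint action of such $g$.
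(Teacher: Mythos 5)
Your reduction of the theorem to an ``enlarged isometry lemma'' --- that every $g\in G_{\mathbb C}$ acting trivially on $\gr^W$ induces an isometry of the standard metric --- is exactly where the argument breaks: that lemma is false, and the functoriality you invoke for it fails. The Deligne bigrading depends on $\bar F$ as well as $F$, so for a complex $g$ preserving $W$ one has $g.I^{p,q}_{(F,W)}=I^{p,q}_{(g.F,W)}$ only when $g$ is real or $g\in\exp(\Lambda^{-1,-1})$; it fails already for $g=e^X$ with $X\in\mathfrak g^{-1,0}$, which acts trivially on $\gr^W$ but does not lie in $\Lambda^{-1,-1}$. Concretely, take $\dim I^{0,0}=\dim I^{-1,0}=\dim I^{0,-1}=1$ with $I^{0,0}=\mathbb C e_0$, $I^{-1,0}=\mathbb C e_1$, $I^{0,-1}=\mathbb C\bar e_1$, and $X(e_0)=e_1$, $X(e_1)=X(\bar e_1)=0$. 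Then $I^{0,0}_{(e^X.F,W)}=\mathbb C(e_0+e_1+\bar e_1)$, not $e^X.I^{0,0}_{(F,W)}=\mathbb C(e_0+e_1)$, and one computes $\|e^Xe_0\|^2_{e^X.F}=\|e_0\|^2_F+\|\bar e_1\|^2_F$; the same failure propagates to the trace-form metric on $q_F$. Since for a unipotent variation $\Gamma(s)$ genuinely has components in $\mathfrak g^{a,b}$ with $a<0\le b$, the identity $\|\gamma'(u)\|_{\gamma(u)}=\|\Gamma(s)\|_{F_\infty}$ is unjustified, and with it the claimed bound with all $\beta_j=0$.

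The correct mechanism is more delicate, and is why the paper telescopes through the partial nilpotent orbits $F_k(z)$ rather than using one path. The only isometries available are those of Lemma~\eqref{lem:isometries}, and the point of unipotency is that the $N_j$ (real, horizontal, and satisfying $N_j(W_\ell)\subseteq W_{\ell-2}$ with $M=W$) are honest $(-1,-1)$-morphisms --- i.e.\ lie in $\Lambda^{-1,-1}$ of the \emph{varying} mixed Hodge structure along the relevant paths --- so that $e^{N(u,0)}$ can be stripped off isometrically at the $k$-th step; the remaining factor $e^{N(0,v)}$ is \emph{not} removed by an isometry argument but is controlled by the relative compactness theorem~\eqref{thm:rel-compact} after twisting by $t(y)$ in the variables $z_{k+1},\dots,z_r$, which is what produces the polynomial factors $y_{k+1}^{b}$ in~\eqref{eq:strong-estimate}. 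Your single-path idea could only be salvaged along these lines: apply the genuine Lemma~\eqref{lem:isometries} to the factor $e^{N(z)}$ alone (which does lie in $\exp(\Lambda^{-1,-1}_{(e^{u\Gamma(s)}.F_\infty,W)})$ by the same real/horizontal/weight-lowering argument), and bound the contribution of $e^{u\Gamma(s)}$ by uniform comparability of the metric on the compact set $\{e^{u\Gamma(s)}.F_\infty\}$ rather than by an exact isometry. As written, however, the proposal rests on a false statement and does not prove the theorem.
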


\par We start with the estimate
$$
      d(F_0(z),F_1(z))\leq K y_1^{b_1}e^{-2\pi y_1}
$$
which we prove as in the previous section. It remains therefore to bound 
$$
           d(F_k(z),F_{k+1}(z))
$$ 
for $k=1,\ldots,n-1$.  

\begin{lem}
$N_j$ is a $(-1,-1)$-morphism of $(F_k(z),W)$ for $1\leq j\leq k$.
\end{lem}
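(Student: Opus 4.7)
My plan is to deduce this directly from three ingredients already in place: Lemma~\ref{lem:CK2.6}, the commutativity of the monodromy logarithms, and Kashiwara's theorem in the unipotent setting.

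First, I would observe that for a unipotent variation the relative weight filtration $M(N_j,W)$ coincides with $W$ for every $j$. Indeed, each $N_j$ satisfies $N_j(W_\ell)\subseteq W_{\ell-2}$ and vanishes on $\Gr^W$, so the characterization of the relative weight filtration via iterated isomorphisms on $\Gr^M\Gr^W$ is trivially satisfied by $M=W$. Consequently, by Kashiwara's theorem applied to the admissible nilpotent orbit $e^{\sum z_j N_j}.F_\infty$, each $N_j$ is a $(-1,-1)$-morphism of the limit mixed Hodge structure $(F_\infty,W)$; in particular $N_j F_\infty^p\subseteq F_\infty^{p-1}$.

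Next, I would combine two commutation facts. The monodromy logarithms mutually commute, so $[N_j,N(z)]=0$ for every $j$, and by Lemma~\ref{lem:CK2.6} one has $[N_j,\Gamma_k(s)]=0$ for $1\leq j\leq k$. Therefore
\[
\Ad\bigl(e^{N(z)}e^{\Gamma_k(s)}\bigr)\,N_j \;=\; N_j \qquad\text{for }1\leq j\leq k,
\]
i.e.\ the element $e^{N(z)}e^{\Gamma_k(s)}\in G_{\CC}$ commutes with $N_j$.

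Finally, pushing the horizontality property forward along this commutation yields
\[
N_j\,F_k(z)^p \;=\; N_j\,e^{N(z)}e^{\Gamma_k(s)}.F_\infty^p
                   \;=\; e^{N(z)}e^{\Gamma_k(s)}.\bigl(N_j F_\infty^p\bigr)
                   \;\subseteq\; e^{N(z)}e^{\Gamma_k(s)}.F_\infty^{p-1}
                   \;=\; F_k(z)^{p-1}.
\]
Combined with $N_j(W_\ell)\subseteq W_{\ell-2}$ (which holds for all $j$ by admissibility of the unipotent variation), this shows $N_j$ is a $(-1,-1)$-morphism of $(F_k(z),W)$ as required.

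I don't anticipate a serious obstacle: the argument is essentially bookkeeping once the commutation in Lemma~\ref{lem:CK2.6} is in hand. The only point one should verify carefully is that for unipotent variations Kashiwara's theorem really yields the $(-1,-1)$-morphism property for each individual $N_j$ (and not only for $N_1+\cdots+N_r$), but this follows from $M(N_j,W)=W$ together with the standard strictness properties of admissible nilpotent orbits.
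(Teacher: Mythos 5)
Your argument is correct and is essentially the proof in the paper: both commute $N_j$ past $e^{N(z)}e^{\Gamma_k(s)}$ using Lemma~\ref{lem:CK2.6} together with the commutativity of the $N_\ell$, then invoke $N_jF_\infty^p\subseteq F_\infty^{p-1}$, the reality of $N_j$, and $N_j(W_\ell)\subseteq W_{\ell-2}$ to conclude. Your extra detour through Kashiwara's theorem to justify $N_jF_\infty^p\subseteq F_\infty^{p-1}$ is harmless but unnecessary, since that horizontality is already part of the limit-Hodge-filtration package for admissible variations.
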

\begin{proof}
By Lemma \ref{lem:CK2.6}, we know that
$$
       [N_j,\Gamma_k(s)] = 0
$$
for $j=1,\dots,k$, and hence
\begin{align}\label{N-trans}
         N_jF^p_k(z)
          &= N_j(e^{\sum_{\ell}\, z_{\ell} N_{\ell}}e^{\Gamma_k(s)}.F^p_{\infty})  
          = e^{\sum_{\ell}\, z_{\ell} N_{\ell}}e^{\Gamma_k(s)}.N_jF^p_{\infty}\\    
         & \subseteq e^{\sum_{\ell}\, z_{\ell} N_{\ell}}e^{\Gamma_k(s)}.F^{p-1}_{\infty}
          = F^{p-1}_k(z)\nonumber
\end{align}
On the other hand, $N_j$ is real and $N_j(W_{\ell})\subset W_{\ell-2}$.
Taken together, these three facts imply that if $j\leq k$ then $N_j$ 
is a $(-1,-1)$-morphism of $(F_k(z),W)$.
\end{proof}

\par Regarding the local normal form of $F(z)$, a short argument
shows that since the graded variations of Hodge structure are constant,
and the monodromy logarithms act trivially on $Gr^W$ so do the functions
$\Gamma_j$ constructed in the previous section.  In particular, if 
$A$ acts trivially on $Gr^W$ then $e^A:\mathcal M\to\mathcal M$.

\par To compactify notation, write
$$
         z = (u,v),\qquad u=(z_1,\dots,z_k),\qquad v = (z_{k+1},\dots,z_r).
$$
and observe that $F_k(0,v)=e^{N(0,v)}e^{\Gamma_{k}(0,v)}F_{\infty}\in \M$ by the
remarks of the previous paragraph.  Let $\gamma(t,v)$ be
the path
$$
         \gamma(t,v) 
          = e^{N(0,v)}e^{t\Gamma^k}e^{\Gamma_{k}}.F_{\infty}
$$
from $e^{-N(u,0)}F_{k}(z)$ to $e^{-N(u,0)}F_{k-1}(z)$.

\begin{lem} $N_1,\dots,N_k$ are $(-1,-1)$-morphisms along
$\gamma(t,v)$.
\end{lem}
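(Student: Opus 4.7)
The plan is to exploit the Deligne bigrading of $(F_\infty,W)$. In the unipotent setting the relative weight filtration satisfies $M(N_j,W)=W$, so each monodromy logarithm lies in $\mathfrak{g}^{-1,-1}_{(F_\infty,W)}$, while $\Gamma(s)$, and hence both $\Gamma_k$ and $\Gamma^k$, takes values in $\mathfrak{q}=\bigoplus_{a<0,b}\mathfrak{g}^{a,b}_{(F_\infty,W)}$. The two non-trivial requirements to verify are $N_j(W_\ell)\subseteq W_{\ell-2}$ and $N_j(\gamma(t,v)^p)\subseteq \gamma(t,v)^{p-1}$; the former is intrinsic to $N_j$ and is guaranteed by admissibility in the unipotent case, while reality of $N_j$ is automatic.

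For the Hodge shift, I would first use $[N_j,N_\ell]=0$ to pass $N_j$ through $e^{N(0,v)}$, and then conjugate through the remaining factors to obtain
$$
   N_j\gamma(t,v)^p = e^{N(0,v)}e^{t\Gamma^k}e^{\Gamma_k}\cdot Y_j(t)\,F_\infty^p,\qquad
   Y_j(t) = e^{-\ad(\Gamma_k)}e^{-t\ad(\Gamma^k)}N_j.
$$
The key observation is that $\ad(\mathfrak{q})$ strictly decreases the first bigrading index: since $\Gamma_k,\Gamma^k\in\bigoplus_{a<0,b}\mathfrak{g}^{a,b}$, each application of $\ad(\Gamma_k)$ or $\ad(\Gamma^k)$ carries $\mathfrak{g}^{a,b}$ into $\bigoplus_{a'<a,b'}\mathfrak{g}^{a',b'}$. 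Starting from $N_j\in\mathfrak{g}^{-1,-1}$ we conclude that $Y_j(t)\in\bigoplus_{a\leq -1,b}\mathfrak{g}^{a,b}$, and any such element sends $F_\infty^p$ into $F_\infty^{p+a}\subseteq F_\infty^{p-1}$. Re-applying $e^{N(0,v)}e^{t\Gamma^k}e^{\Gamma_k}$ then gives $N_j\gamma(t,v)^p\subseteq\gamma(t,v)^{p-1}$.

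The main obstacle is the case $j=k$: Lemma~\ref{lem:CK2.6} provides $[N_j,\Gamma_{k-1}]=0$ only for $j\leq k-1$, so $N_k$ need not commute with $\Gamma^k$, and the direct commutation argument used in the preceding lemma breaks down at the $e^{t\Gamma^k}$ factor. The bigrading reasoning above bypasses this by giving a uniform argument for all $j\leq k$, using only the membership of $N_j$ in $\mathfrak{g}^{-1,-1}$ rather than any specific commutation identity with $\Gamma^k$.
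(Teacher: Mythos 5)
Your reduction of the claim to the three conditions ($N_j$ real, $N_j(W_\ell)\subseteq W_{\ell-2}$, and $N_j\gamma(t,v)^p\subseteq\gamma(t,v)^{p-1}$), and your conjugation formula $N_j\gamma(t,v)^p=e^{N(0,v)}e^{t\Gamma^k}e^{\Gamma_k}\cdot Y_j(t)F_\infty^p$, are both fine. But the ``key observation'' that drives your proof of the Hodge shift is inverted, and this is a genuine gap. Since the Hodge filtration is \emph{decreasing}, for $a\leq -1$ one has $F_\infty^{p+a}\supseteq F_\infty^{p-1}$, not $F_\infty^{p+a}\subseteq F_\infty^{p-1}$. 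An element of $\mathfrak g^{a,b}$ with $a\leq -2$ sends $F_\infty^p$ into $F_\infty^{p+a}$, which strictly contains $F_\infty^{p-1}$, and such an element is in general \emph{not} horizontal of degree $-1$. So the fact that $\ad(\mathfrak q)$ strictly lowers the first index works against you: every nonvanishing bracket $[\Gamma_k,N_j]$ or $[\Gamma^k,N_j]$ contributes a component of $Y_j(t)$ in $\bigoplus_{a\leq -2}\mathfrak g^{a,b}$, which would move $F_\infty^p$ outside $F_\infty^{p-1}$. Your argument therefore only yields $N_j\gamma^p\subseteq\gamma^{p-\lambda}$ with $\lambda$ the length of $\mathfrak q$, which is not the statement. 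The conclusion genuinely requires $Y_j(t)=N_j$, i.e.\ the vanishing of exactly those brackets you set out to avoid.

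That vanishing is available, and it is what the paper uses: by Lemma~\ref{lem:CK2.6} (horizontality), $[N_j,\Gamma_k]=0$ and $[N_j,\Gamma_{k+1}]=0$ for $j\leq k$, hence the increment $\Gamma^k$ interpolating between them also commutes with $N_1,\dots,N_k$, and the argument of \eqref{N-trans} passes $N_j$ through all three factors of $\gamma(t,v)$ unchanged, after which $N_jF_\infty^p\subseteq F_\infty^{p-1}$ finishes the Hodge-shift condition. The ``obstacle at $j=k$'' that motivated your detour is an artifact of an index slip in the text: in this subsection the curve $\gamma$ is used to bound $d(F_k(z),F_{k+1}(z))$, so the relevant increment satisfies $e^{\Gamma^\bullet}=e^{\Gamma_k}e^{-\Gamma_{k+1}}$ (compare the last displayed path $e^{N(0,v)}e^{w\Gamma^k}e^{\Gamma_{k+1}}.F_\infty$ in the final lemma), and both factors commute with $N_j$ for all $j\leq k$. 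With the indices read consistently there is no exceptional case, and the commutation argument is both necessary and sufficient.
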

\begin{proof} Since both $\Gamma_k$ and $\Gamma_{k+1}$ commute with $N_j$ for 
$j=1,\dots,k$ so does $\Gamma^k$. 
Arguing as in equation (\ref{N-trans}) it then follows that $N_1,\dots,N_k$ 
are $(-1,-1)$-morphisms along $\gamma(t,v)$.
\end{proof}

\begin{lem}
Given $k>0$ let $L(v)$ denote the length of the curve 
$\gamma(t,v)$ with respect to the standard metric. Then, 
$d(F_k(z),F_{k+1}(z))\leq L(v)$.
\end{lem}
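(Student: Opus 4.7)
The strategy is to exhibit a piecewise smooth path of length exactly $L(v)$ between $F_k(z)$ and its neighbor in the telescoping chain, and then invoke the isometry properties from Lemma \ref{lem:isometries} together with the previous lemma that $N_1,\ldots,N_k$ act as $(-1,-1)$-morphisms along the whole curve $\gamma(t,v)$. Concretely, I would consider the pushforward curve
\[
\tilde\gamma(t,v) := e^{N(u,0)}.\gamma(t,v) = e^{N(u,0)}e^{N(0,v)}e^{t\Gamma^k}e^{\Gamma_k}.F_\infty,
\]
which at $t=0$ equals $F_k(z)$ and at $t=1$ equals $F_{k-1}(z)$ (using $e^{\Gamma_{k-1}}=e^{\Gamma^k}e^{\Gamma_k}$). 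By the definition of the Riemannian distance attached to the standard metric on $\M$,
\[
d(F_{k-1}(z),F_k(z)) \leq \int_0^1 \|\tilde\gamma'(t,v)\|_{\tilde\gamma(t,v)}\,dt = \int_0^1 \|L_{e^{N(u,0)}*}\gamma'(t,v)\|_{e^{N(u,0)}.\gamma(t,v)}\,dt.
\]

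The key step is then to observe that the integrand on the right equals $\|\gamma'(t,v)\|_{\gamma(t,v)}$ pointwise. By the preceding lemma, at every point of $\gamma(t,v)$ the operators $N_1,\ldots,N_k$ are $(-1,-1)$-morphisms, hence belong to $\mathfrak g^{-1,-1}_{\gamma(t,v)}$. Since $\Lambda^{-1,-1} = \bigoplus_{r,s<0}\mathfrak g^{r,s}$ is a \emph{complex} subspace, the element $N(u,0) = \sum_{j=1}^k z_j N_j$ also lies in $\Lambda^{-1,-1}_{\gamma(t,v)}$ for each $t$, even though the coefficients $z_j$ are complex. Lemma \ref{lem:isometries} then guarantees that
\[
L_{e^{N(u,0)}*}: T_{\gamma(t,v)}\M \longrightarrow T_{\tilde\gamma(t,v)}\M
\]
is an isometry of the standard metric for each $t$, so the integrand equals $\|\gamma'(t,v)\|_{\gamma(t,v)}$ and the integral equals $L(v)$.

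The main technical concern in this argument is not conceptual but verificational. First, one must confirm that $\gamma(t,v)$ (and hence $\tilde\gamma(t,v)$) actually takes values in $\M$ rather than merely in $\check{\M}$ for $t\in[0,1]$; this is guaranteed in the admissibility region since the endpoints are in $\M$ and the path is obtained by exponentiating a small nilpotent perturbation $t\Gamma^k$ together with admissible nilpotent-orbit operators. Second, one must check that $\Lambda^{-1,-1}$ is indeed closed under scalar multiplication by arbitrary complex numbers (immediate from its definition as a direct sum of complex Hodge components). Given these, the argument is essentially a direct invocation of Lemma \ref{lem:isometries} applied fibrewise along the curve, with no need for an $SL_2$-orbit or relative compactness input at this step. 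The payoff of the lemma is that in the next step one will only need to bound $L(v)$, which involves derivatives of $\Gamma^k$ and hence acquires a factor of $|s_k|=e^{-2\pi y_k}$, decoupled from the variables $y_1,\ldots,y_{k-1}$.
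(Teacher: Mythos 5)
Your argument is correct and is essentially identical to the paper's own proof: both bound $d$ by the length of the pushed-forward path $e^{N(u,0)}\gamma(t,v)$ and then use the preceding lemma (that $N_1,\dots,N_k$ are $(-1,-1)$-morphisms along $\gamma$) together with Lemma \ref{lem:isometries} to conclude that $e^{N(u,0)}\in\exp(\Lambda^{-1,-1}_{\gamma(t,v)})$ acts by isometries pointwise, so the length is exactly $L(v)$. Your explicit remark that $\Lambda^{-1,-1}$ is a complex subspace (so the complex coefficients $z_j$ are harmless) is a detail the paper leaves implicit, and the off-by-one in the indices ($F_{k-1}$ versus $F_{k+1}$) is present in the paper's own formulation, not an error you introduced.
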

\begin{proof} 
By definition, $d(F_k(z),F_{k+1}(z))$ is less than or equal
to the length $\tilde L$ of the path $e^{N(u,0)}\gamma(t,v)$ from $F_k(z)$ 
to $F_{k+1}(z)$.  Now we have
$$
        \tilde L = \int_0^1\, 
                   \left\| \frac{d}{dt} e^{N(u)}\gamma(t,v)
                   \right\|_{e^{N(u,0)}\gamma(t,v)}\, dt
                 = \int_0^1\, 
                   \left\| e^{N(u,0)}\frac{d}{dt}\gamma(t,v)
                   \right\|_{e^{N(u,0)}\gamma(t,v)}\, dt.
$$
By Lemma~\eqref{lem:isometries}, the fact that $N(u,0)$ is a $(-1,-1)$-morphism
along $\gamma(t,v)$ implies that 
$$
       \tilde L = \int_0^1\, 
                   \left\| e^{N(u)}\frac{d}{dt}\gamma(t,v)
                   \right\|_{e^{N(u)}\gamma(t,v)}\, dt
                = \int_0^1\, 
                   \left\| \frac{d}{dt}\gamma(t,v)
                   \right\|_{\gamma(t,v)}\, dt = L(v)
$$
and hence $d(F_k(z),F_{k+1}(z))\leq L(v)$ as required.
\end{proof}

\par It remains to estimate $L(v)$:

\begin{lem} $L(v) \leq K y_{k+1}^{b} e^{-2\pi y_{k+1}}$ for suitable non-negative
constants $K$ and $b$.
\end{lem}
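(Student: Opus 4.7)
\emph{Plan.} Write $\gamma(t,v)=g(t)\,F_\infty$ with $g(t)=e^{N(0,v)}\,e^{t\Gamma^{k+1}(s)}\,e^{\Gamma_{k+1}(s)}$, so that $g(t)^{-1}\dot g(t)=\Ad(e^{-\Gamma_{k+1}})\Gamma^{k+1}$ and
\[
L(v)=\int_0^1\bigl\|L_{g(t)*}\,\Ad(e^{-\Gamma_{k+1}})\Gamma^{k+1}\bigr\|_{g(t)F_\infty}\,dt.
\]
The strategy is to peel off isometric factors of $g(t)$ until what remains lives in a compact region, after which the bound will follow from the vanishing $\Gamma^{k+1}(s)|_{s_{k+1}=0}=0$.

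\textbf{Step 1 (isometric reduction).} Decompose $e^{N(0,v)}=e^{N(0,x_v)}e^{iN(0,y_v)}$ where $x_v,y_v$ are the real and imaginary parts of $v$. The real factor $e^{N(0,x_v)}\in G_{\RR}$ is an isometry by Lemma~\ref{lem:isometries}. I claim $L_{e^{iN(0,y_v)}*}$ is also an isometry in this unipotent setting: each $N_j$ acts trivially on $\gr^W$, hence so does $\exp(iN(0,y_v))$, and the mixed Hodge norm on $\frakg^{p,q}_{(F,W)}$ is determined entirely by the fixed polarizations on $\gr^W$ together with the induced graded endomorphism $\gr^{W^{\End}}(\cdot)$, both of which are preserved under conjugation by any element trivial on $\gr^W$.

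\textbf{Step 2 (bounded region and exponential decay).} After applying both isometries, $L(v)$ equals the length of $h(t)F_\infty$ where $h(t)=e^{t\Gamma^{k+1}(s)}e^{\Gamma_{k+1}(s)}$. Since $\Gamma$ is holomorphic on $\Delta^r$ with $\Gamma(0)=0$, both $\Gamma_{k+1}(s)$ and $\Gamma^{k+1}(s)$ are bounded on compacta in $\Delta^r$, so $h(t)$ lies in a compact subset of $G_\CC$ and $h(t)F_\infty$ in a compact subset of $\M$; consequently $\Ad(e^{-\Gamma_{k+1}})$ and $L_{h(t)*}$ both have uniformly bounded operator norm relative to the Hodge metric at $F_\infty$. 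Meanwhile $\Gamma_k(s)|_{s_{k+1}=0}=\Gamma_{k+1}(s)$ forces $e^{\Gamma^{k+1}(s)}|_{s_{k+1}=0}=\mathrm{id}$, whence $s_{k+1}\mid\Gamma^{k+1}(s)$ and $\|\Gamma^{k+1}(s)\|_{F_\infty}\leq C|s_{k+1}|=Ce^{-2\pi y_{k+1}}$. Combining the two bounds makes the integrand $\leq Ke^{-2\pi y_{k+1}}$, so $L(v)\leq Ke^{-2\pi y_{k+1}}$, which subsumes the claimed polynomial-times-exponential estimate.

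\textbf{Main obstacle.} The delicate point is the isometry claim for $e^{iN(0,y_v)}$ in Step~1. Lemma~\ref{lem:isometries} only explicitly names $G_{\RR}\cup\exp(\Lambda^{-1,-1})$ as isometric, and $e^{iN(0,y_v)}$ need not lie in $\exp(\Lambda^{-1,-1}_{h(t)F_\infty})$ at the varying base-point $h(t)F_\infty$. Recognizing that triviality on $\gr^W$ is by itself sufficient for the isometry—because the standard metric is built entirely from the fixed polarizations on $\gr^W$—is where the unipotent hypothesis enters essentially, and it is precisely what eliminates the distortion factor $y_1^{(L-1)/2}$ that appeared in Lemma~\ref{lem:distortion-2} in the general case.
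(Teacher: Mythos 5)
There is a genuine gap in Step~1, and it is located exactly at the point you flag as the ``main obstacle.'' Triviality on $\gr^W$ is \emph{not} sufficient for $L_{g*}$ to be an isometry of the standard metric. The mixed Hodge metric at $F$ is not built from the graded polarizations alone: it also declares the Deligne subspaces $I^{p,q}_{(F,W)}$ to be mutually orthogonal, and the inner product of $u,v\in I^{p,q}$ is computed by first projecting to $\gr^W_{p+q}$. Consequently $g$ is an isometry from $(V_{\CC},h_F)$ to $(V_{\CC},h_{g.F})$ only if $g\bigl(I^{p,q}_{(F,W)}\bigr)=I^{p,q}_{(g.F,W)}$, and a complex element that is trivial on $\gr^W$ generally fails this (the bigrading is defined using $\overline{F}$, so it is not functorial under non-real $g$). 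A concrete failure: in a biextension-type $\M$ take $\lambda\in\mathfrak g^{0,-1}_{(F,W)}$ nonzero (so $e^{\lambda}$ is trivial on $\gr^W$ and fixes $F$); for $\alpha\in\mathfrak g^{-1,0}\subset q_F$ one finds $L_{e^{\lambda}*}\alpha=\alpha+[\lambda,\alpha]$ with $[\lambda,\alpha]\in\mathfrak g^{-1,-1}$ orthogonal to $\alpha$, so the norm strictly increases. The sufficient conditions really are the ones in Lemma~\ref{lem:isometries}: $g\in G_{\RR}$ or $g\in\exp\bigl(\Lambda^{-1,-1}_{(F,W)}\bigr)$ at the relevant base point. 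Your element $e^{iN(0,y_v)}$ is neither: it is not real, and the operators $N_j$ with $j>k$ are \emph{not} known to be $(-1,-1)$-morphisms along $\gamma(t,v)$ (the commutation $[N_j,\Gamma_{k+1}]=0$ is only available for $j\le k+1$, and even for those one would still need $iN_j\in\Lambda^{-1,-1}$ at the moving point $e^{t\Gamma^{k+1}}e^{\Gamma_{k+1}}.F_\infty$, which requires $N_j$ to have no $\mathfrak g^{a,b}$-component with $a\ge 0$ or $b\ge0$ there). Only when the weight filtration has length two does $W_{-2}\mathfrak g=\Lambda^{-1,-1}$ force your claim to hold; for longer $W$ it fails.

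The symptom of the gap is that your argument yields $L(v)\le Ke^{-2\pi y_{k+1}}$ with no polynomial factor, which is stronger than the statement --- and stronger than what the method can give. The intended argument does not peel off $e^{iN(0,y_v)}$ as an isometry; it conjugates by $t^{-1}(y)$ built from the nilpotent orbit $e^{\sum_{j>k}z_jN_j}.F_\infty$, uses Theorem~\ref{thm:rel-compact} to place $t^{-1}(y).F(0,v)$ in a relatively compact subset of $\M$, and bounds the distortion of $\Ad\bigl(t^{-1}(y)e^{N(0,v)}\bigr)$ by non-negative half-integral powers of $y_{k+1}$ (Lemma~\ref{lem:P-function} and its analogue for the real part). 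The divisibility $s_{k+1}\mid\Gamma^{k+1}$ then supplies $e^{-2\pi y_{k+1}}$, which absorbs the polynomial growth; this is precisely where the factor $y_{k+1}^{b}$ in the statement comes from. Your Step~2 is otherwise fine in the unipotent setting (in particular $e^{t\Gamma^{k+1}}e^{\Gamma_{k+1}}.F_\infty$ does lie in $\M$ because these exponents act trivially on $\gr^W$), but the proof cannot close without replacing the isometry claim for $e^{iN(0,y_v)}$ by the $t(y)$-twisting and relative compactness argument.
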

\begin{proof} We begin with the observation that
$$
   F(0,v) = e^{\sum_{j>k}\, z_j N_j}e^{\Gamma_{k+1}}.F_{\infty}
$$
is the period map of a variation of mixed Hodge structure over
$\Delta^{r-k}$ in the variables $(s_{k+1},\dots,s_r)$.  Let
$$
    F_{\nilp}(0,v) =  e^{\sum_{j>k}\, z_j N_j}.F_{\infty}
$$
be the associated nilpotent orbit.  Let $t(y)$ $(y=(y_k,\ldots ,y_r))$ be as 
in \eqref{eq:t-y}) constructed from $F_{\nilp}(0,v)$. 
Then, by Theorem \eqref{thm:rel-compact}, the image of the map 
$$
            \tilde{F}(0,v) = t^{-1}(y).F(0,v)
$$
will be a relative compact subset of $\M$ for $v$ in an appropriate $I'$
(with the real component of $v$ restricted to a compact set).
Therefore,
$$
\aligned 
      L(v) &= \int_0^1\, 
                    \left\| \frac{d}{dw}\gamma(w,v)
                    \right\|_{\gamma(w,v)}\,dw  \\
           &= \int_0^1\, 
                \left\| 
            \frac{d}{dw}e^{N(0,v)}e^{w\Gamma^k}e^{\Gamma_{k+1}}.F_{\infty}
            \right\|_{e^{N(0,v)}e^{w\Gamma^k}
                    e^{\Gamma_{k+1}}.F_{\infty}}\, dw \\ 
           &= \int_0^1\, 
              \left\|\frac{d}{dw}(\text{Ad}(e^{N(0,v)})e^{w\Gamma^k}).F(0,v)
                \right\|_{(\text{Ad}(e^{N(0,v)})e^{w\Gamma^k}).F(0,v)}\,dw   
\endaligned
$$
To continue, let 
$$
       A(w,v) = \Ad(t^{-1}(y)e^{N(0,v)})e^{w\Gamma^k})
$$
Then, by the above computations,
$$
 \aligned
       L(v) &= \int_0^1\, 
                    \left\| t(y) 
                    \frac{d}{dw} A(w,v).\tilde{F}(0,v)
                    \right\|_{t(y) A(w,v).\tilde{F}(0,v)} dw
 \endaligned
$$
Finally, since $s_{k+1}$ divides $\Gamma^k$ and $t(y)$ and $e^{N(v)}$ can
be bounded by polynomials in half-integral powers of $y_{k+1}$ as in 
\S 3, it follows that 
$$
     \|\frac{d}{dw} A(w,v)\|_{\tilde{F}(0,v)} \leq C y_{k+1}^be^{-2\pi y_{k+1}}
$$
for suitable constants $C$ and $b$.  In particular, since $\tilde{F}(0,v)$ 
takes values in a relatively compact set, and the distortion introduced
by $t(y)$ is again at worst a non-negative, half-integral power of $y_{k+1}$, 
it follows that
$$
          L(v) \leq C y_{k+1}^b e^{-2\pi y_{k+1}}
$$
\end{proof}

\section{Archimedean Heights} In this section, we prove 
Theorem~\eqref{thm:loc-int}.  We begin with a discussion of the
biextension metric.

\par Let $(F,W)$ be a mixed Hodge structure on $V$ with weight
graded quotients
\begin{equation}
         Gr^W_{-2} \cong \mathbb Z(1),\qquad Gr^W_{-1} = H,\qquad 
         Gr^W_0 \cong \mathbb Z(0)                 \label{eq:biext-Gr}
\end{equation}
Assume that $V$ has lattice $V_{\mathbb Z}$ such that 
$W_a V_{\mathbb Z}/W_b V_{\mathbb Z}$ is torsion free $(b < a)$ and $H =
Gr^W_{-1}$ 
is a polarizable Hodge structure of weight $-1$ of arbitrary level.
\medskip

 For $(F,W)$ as in~\eqref{eq:biext-Gr}, fix the isomorphisms 
$Gr^W_{-2j}\cong\mathbb Z(j)$ and  the extension classes
\begin{equation}
\begin{gathered}
        0 \to W_{-2} \to W_{-1} \to Gr^W_{-1}\to 0 \\
        0 \to Gr^W_{-1}\to W_0/W_{-2} \to Gr^W_0 \to 0
\end{gathered}\label{eq:ext-data}
\end{equation}
Let $1$ and $1^{\vee}$ be the generators of $\mathbb Z(0)$ and $\mathbb Z(1)$, and
$\mu:V\to W_{-2}$ be the linear map defined by the following 
properties:
\begin{itemize}
\item[(i)]  $\mu$ annihilates $W_{-1}$; 
\item[(ii)] $\mu(v) = 1^{\vee}$ for any lifting $v\in V$ of $1\in Gr^W_0$;
\item[(iii)] $\mu$ is in the center of $\mathfrak g_{\mathbb C}=\Lie{G_{\CC}}$;
\end{itemize}
Let $\tilde B$ be the set of all mixed Hodge structures 
$(\tilde F,W)$ on $V$  with the fixed graded pieces \eqref{eq:biext-Gr} and the extension data \eqref{eq:ext-data},
and $B$ denote the quotient of $\tilde B$ by the equivalence
relation
$$
        (\tilde F,W) \equiv (e^{a\mu}.\tilde F,W),\qquad a\in\mathbb Z
$$
Then, $\mathbb C^*$ acts simply transitively on $B$ via the rule
\begin{equation}
       [t.F] = [e^{\frac{1}{2\pi i}\log(t)\mu}.F] \label{eq:Gm-action}
\end{equation}
\medskip

Let $Y_{(F,W)}$ be the semisimple endomorphism of 
$V_{\mathbb C}$ which acts as multiplication by $p+q$ on $I^{p,q}_{(F,W)}$.  Then, 
$$
      \delta_{(F,W)}\in W_{-2} End(V_{\mathbb C})
$$
is defined (see Prop. (2.20) in [CKS]) by the equation
$$
      \bar Y_{(F,W)} = Y_{(F,W)} - 4 i\delta_{(F,W)}
$$
(due to the short length of $W$), and hence
\begin{equation}
  \delta_{(F,W)} = \frac{1}{4i}(Y_{(F,W)} - \bar Y_{(F,W)})\label{eq:delta-eq}
\end{equation}
Moreover, since $\delta$ depends only on the isomorphism class of the 
underlying $\mathbb R$-MHS, $\delta$ descends to $B$.  

\par Regarding the $\mathbb C^*$ action \eqref{eq:Gm-action}, we have:
\begin{eqnarray*}
      \delta_{(t.F,W)} &=& \frac{1}{4i}(Y_{(t.F,W)} - \bar Y_{(t.F,W)}) \\
                       &=& \frac{1}{4i}(
                       (Y_{(F,W)}+\frac{1}{\pi i}\log(t)\mu) - 
                       \overline{(Y_{(F,W)}+\frac{1}{\pi i}\log(t)\mu)}) \\
                       &=& \delta_{(F,W)} - \frac{1}{2\pi}\log|t|\mu
\end{eqnarray*}
Accordingly, if for any equivalence class $[F]\in B$, we define
\begin{equation}
      |[F]| = e^{-2\pi\delta_{(F,W)}/\mu} \in\CC            \label{eq:metric}
\end{equation}
it follows that
$$
      |t.[F]| = e^{-2\pi(\delta_{(F,W)}-\frac{1}{2\pi}\log|t|\mu)/\mu} 
              = e^{-2\pi\delta_{(F,W)}/\mu + \log|t|}                  
              = |t| |[F]|
$$

\par Suppose now that $S$ is a Zariski open subset of a complex
manifold $\bar S$.  Let $\mathcal V_{\mathbb Z}$ be a local system of torsion 
free $\mathbb Z$-modules of finite rank equipped with a weight filtration $W$ 
which satisfies the analog of~\eqref{eq:biext-Gr}, 
e.g.~$Gr^W_{-2j} \cong \mathbb Z(j)$ and 
$\mathcal H = Gr^W_{-1}$ is a polarizable variation of Hodge structure 
of weight $-1$.  Let $\nu_A$ and $\nu_B$ be admissible normal functions
on $S$ which correspond to extensions
\begin{equation}
\begin{gathered}
        0 \to \mathbb Z(1)\to W_{-1}\to \mathcal H\to 0 \\
        0 \to \mathcal H\to W_0/W_{-2} \to \mathbb Z(0)\to 0
\end{gathered}\label{eq:nf-ext-data}
\end{equation}
as in \eqref{eq:ext-data}.  

\begin{lem} For any sufficiently small open set
$U$ of $S$ in the analytic topology, there exists an admissible
variation of mixed Hodge structure $\mathcal V$ over $U$ with
underlying weight filtration $W_*\mathcal V_{\mathbb Z}$ and extension data
\eqref{eq:nf-ext-data}.  
\end{lem}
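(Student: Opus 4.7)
The strategy is to construct a holomorphic Hodge filtration on $\mathcal V\otimes\mathcal O_U$ that lifts the Hodge filtrations carried by the VMHS associated to $\nu_A$ and $\nu_B$. By admissibility, these normal functions correspond to admissible variations $\mathcal V_A\to S$ with underlying local system $W_{-1}\mathcal V_{\mathbb Z}$ and weight graded quotients $\mathbb Z(1),\mathcal H$, and $\mathcal V_B\to S$ with underlying local system $(W_0/W_{-2})\mathcal V_{\mathbb Z}$ and graded quotients $\mathcal H,\mathbb Z(0)$. These carry holomorphic Hodge filtrations $\mathcal F_A^{\bullet}$ on $W_{-1}\otimes\mathcal O_S$ and $\mathcal F_B^{\bullet}$ on $(W_0/W_{-2})\otimes\mathcal O_S$ that induce the same filtration on the common graded piece $\mathcal H$.

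I would next construct the lifted filtration $\mathcal F^{\bullet}\subset\mathcal V\otimes\mathcal O_U$. At a single point $s\in U$ the set of filtrations on $V_s$ restricting to $\mathcal F_{A,s}$ on $W_{-1}$ and projecting to $\mathcal F_{B,s}$ on $W_0/W_{-2}$ forms a torsor under $\mathrm{Hom}_{\mathbb C}(\mathbb Z(0),\mathbb Z(1))\cong\mathbb C$: the level $F^p$ for $p\geq 1$ is forced to equal $\mathcal F_A^p$ since it must vanish in $\Gr^W_0$, the level $F^p$ for $p\leq -1$ is forced to equal the preimage of $\mathcal F_B^p$ in $\mathcal V$ since it must contain $W_{-2,\mathbb C}$, and only the position of $F^0$ modulo $W_{-2,\mathbb C}$ is free. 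Non-emptiness of the torsor at each point follows from Carlson's vanishing $\mathrm{Ext}^2_{\mathrm{MHS}}(\mathbb Z(0),\mathbb Z(1))=0$. Over $U$ these torsors assemble into a holomorphic affine line bundle $\mathcal T\to U$; after shrinking $U$ to a polydisk, Cartan's Theorem B provides a holomorphic section, yielding the desired $\mathcal F^{\bullet}$.

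It remains to verify Griffiths transversality and admissibility. The Gauss--Manin connection $\nabla$ preserves $W$, acts trivially on the Tate graded pieces $\mathbb Z(0)$ and $\mathbb Z(1)$, and satisfies transversality on $\mathcal F_A$ and $\mathcal F_B$ by construction; the transversality defect of any lift $\mathcal F^{\bullet}$ therefore takes values in a coherent sheaf on $U$ whose sections vanish on a small enough polydisk by Theorem B, and the section of $\mathcal T$ can be adjusted accordingly. Admissibility is nearly automatic in the biextension regime: $W$ has length three and $N_j(W_k)\subset W_{k-2}$ for each local monodromy logarithm, so $M(N_j,W)=W$ holds tautologically; the limit Hodge filtration at any boundary point exists because its two projections to $\mathcal V_A$ and $\mathcal V_B$ do, by the admissibility of $\nu_A$ and $\nu_B$. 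The principal obstacle is arranging that one and the same section of $\mathcal T$ simultaneously satisfies transversality and the boundary admissibility condition, since both requirements are local sheaf-cohomological vanishings on $U$; this is precisely why the statement is genuinely local and one is forced to shrink $U$.
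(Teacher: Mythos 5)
The paper states this lemma without proof (the details are deferred to the reference [BP3] on jumps in the archimedean height), so there is no in-paper argument to compare against; I will therefore assess your construction on its own terms. Your central idea is correct and is the natural one: for $p\geq 1$ the filtration is forced to be $\mathcal F_A^p$, for $p\leq -1$ it is forced to be the full preimage of $\mathcal F_B^p$, and the only freedom is the lift of $F^0$, which is a torsor under $W_{-2,\mathbb C}\cong\mathbb C\mu$ --- exactly the torsor structure \eqref{eq:Gm-action} that the paper exploits immediately after the lemma. Every point of this torsor is automatically a mixed Hodge structure (one checks directly that the induced filtrations on $Gr^W$ are the given pure ones, so the appeal to $\Ext^2_{\mathrm{MHS}}=0$ is more than you need), and a holomorphic section over a polydisk exists since $H^1(U,\mathcal O_U)=0$.

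Two of your verification steps, however, are wrong as written. First, the sentence about the transversality defect taking values in ``a coherent sheaf on $U$ whose sections vanish on a small enough polydisk by Theorem B'' is not an argument: Cartan's Theorem B kills higher cohomology, it does not force sections of a coherent sheaf to vanish, and no adjustment of the section of $\mathcal T$ is specified. Fortunately no adjustment is needed: since $F^{-1}$ is the full preimage of $\mathcal F_B^{-1}$ it contains $W_{-2,\mathbb C}$, and the ambiguity in the lift of $F^0$ lies precisely in $W_{-2,\mathbb C}$; hence $\nabla F^0\subseteq F^{-1}\otimes\Omega^1_U$ follows from transversality of $\mathcal F_B$ modulo $W_{-2}$ together with $\nabla\mathcal F_A^0\subseteq\mathcal F_A^{-1}\otimes\Omega^1_U$, for \emph{every} holomorphic lift. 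Second, your claim that $N_j(W_k)\subseteq W_{k-2}$ and $M(N_j,W)=W$ ``tautologically'' is false for biextensions: that property characterizes unipotent variations (\S 4.3), whereas here $\mathcal H=Gr^W_{-1}$ may have nontrivial local monodromy, and indeed the proof of Theorem \ref{thm:loc-int} works with $M=M(N,W)\neq W$. This error is harmless only because the issue does not arise: $U$ is a small open subset of $S$ itself, not of $\bar S$, so the variation you construct has no punctures to be admissible at, and admissibility reduces to graded-polarizability, which holds because $\mathcal H$ is polarizable and the Tate graded pieces are trivially polarized. Your closing remark about a tension between transversality and ``boundary admissibility'' as the reason for shrinking $U$ therefore identifies a non-existent obstacle; the statement is local simply because the affine $\mathbb C\mu$-bundle of lifts need not have a global holomorphic section over all of $S$ --- which is exactly why $\mathcal B$ is a possibly nontrivial line bundle.
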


\par Accordingly, there exists a unique $\mathbb C$-line bundle $\mathcal B$ 
over $S$ having such VMHS as local generators.  Indeed, the fibers of 
$\mathcal B$ are just
the sets $B$ defined above.  Since the $\mathbb C^*$-action \eqref{eq:Gm-action}
is simply transitive and the Hodge filtration of a VMHS is holomorphic with
respect to the flat structure $\mathcal V_{\mathbb Z}$ it follows that if 
$U_i$ and $U_j$ are open sets which support VMHS $\mathcal V_i$ and 
$\mathcal V_j$  which  satisfy the hypothesis of the Lemma then 
$\mathcal V_i = f_{ij}.\mathcal V_j$ for some element 
$f_{ij}\in\mathcal O^*(U_i\cap U_j)$.    Furthermore, since
\eqref{eq:Gm-action} is a simply transitive group action, $\{f_{ij}\}$ 
is a cocycle.

\begin{thm} Let $\mathcal V\to\Delta^{*r}$ be an admissible variation of mixed
Hodge structure of biextension type and
$$
       |\mathcal V| = e^{-\phi}
$$
Then, $\phi\in L^1_{loc}(\Delta^r)$.  
\end{thm}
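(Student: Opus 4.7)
\par The plan is to show that $\phi=-\log|\mathcal V|=2\pi\delta_{(F(z),W)}/\mu$ grows at most linearly in $y_j=-\log|s_j|/(2\pi)$ near the boundary divisor, which is manifestly in $L^1_{loc}$.  The ratio $\delta/\mu$ makes sense because in the biextension case $\delta\in\mathfrak{g}^{-1,-1}_{(F,W)}\subset W_{-2}\mathfrak{g}_{\CC}$, and this ambient space is one-dimensional and spanned by the central element $\mu$.  Away from the boundary divisor $\phi$ is smooth, so the analysis reduces to behavior near boundary points of $\Delta^r$.

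\par Consider first the deepest boundary point $0\in\Delta^r$ together with the fundamental cone $I'=\{y_1\geq\cdots\geq y_r\geq 1\}$.  Setting $\hat F(z)=t^{-1}(y)e^{-N(x)}.F(z)$, one has $F(z)=e^{N(x)}t(y).\hat F(z)$, and by Theorem~\ref{thm:rel-compact} the map $\hat F$ takes values in a relatively compact subset $K\subset\M$.  Since $e^{N(x)}$ and $t(y)$ both lie in $GL(V_{\RR})$ and preserve $W$, Remark~\ref{rem:splitting} yields
\[
     \delta(F(z),W)=\Ad(e^{N(x)}t(y))\delta(\hat F(z),W).
\]
Writing $\delta(\hat F(z),W)=c(z)\mu$ with $c$ continuous on $\M$, the scalar $|c(z)|$ is uniformly bounded by some $C>0$ on $K$.

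\par It remains to compute $\Ad(e^{N(x)}t(y))\mu$.  Since the $N_j\in\mathfrak{g}_{\CC}$ and the $H_j\in\mathfrak{g}_{\RR}$ lie in the Lie algebra and $\mu$ is central in $\mathfrak{g}_{\CC}$, we have $\Ad(e^{N(x)})\mu=\mu$ and $\Ad(y_j^{-H_j/2})\mu=\mu$.  The grading operator $Y_0$, however, is \emph{not} in $\mathfrak{g}_{\CC}$: scalar multiplication on a nontrivial graded piece $\gr^W_k$ with $k\neq 0$ violates the infinitesimal polarization condition, so the centrality of $\mu$ does not force $\Ad(y_1^{-Y_0/2})\mu=\mu$.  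Instead, a direct commutator calculation on the decomposition $V=V_0\oplus V_{-1}\oplus V_{-2}$ from Deligne's grading yields $[Y_0,\mu]=-2\mu$ (since $\mu:V_0\to V_{-2}$ while $Y_0$ acts as the weight on each $V_j$), whence $\Ad(y_1^{-Y_0/2})\mu=y_1\mu$.  Combining,
\[
     \Ad(t(y))\mu=y_1\mu,\qquad
     \phi(z)=2\pi\,\frac{\delta(F(z),W)}{\mu}=2\pi c(z)y_1,
\]
so $|\phi(z)|\leq 2\pi C y_1$ uniformly on $I'$.

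\par Permuting indices, the analogous estimate $|\phi|\leq 2\pi C y_{\sigma(1)}$ holds in each chamber $\{y_{\sigma(1)}\geq\cdots\geq y_{\sigma(r)}\geq 1\}$, so $|\phi|\leq 2\pi C\sum_j|\log|s_j||$ on the polydisk $\{|s_j|\leq e^{-2\pi}\text{ for all }j\}$.  Applying Theorem~\ref{thm:rel-compact} to the admissible subvariations obtained by fixing the non-vanishing coordinates gives analogous polylogarithmic bounds near lower-dimensional strata, while continuity of $\phi$ handles compact subsets of $\Delta^{*r}$.  Since $\log|s|\in L^1_{loc}(\Delta)$, we conclude $\phi\in L^1_{loc}(\Delta^r)$.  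The main technical subtlety is the commutator calculation producing $\Ad(t(y))\mu=y_1\mu$: centrality of $\mu$ in $\mathfrak{g}_{\CC}$ alone is not enough because $Y_0\notin\mathfrak{g}_{\CC}$, and the resulting scaling by $y_1$ is precisely what lets $\phi$ exhibit the expected logarithmic growth while remaining integrable.
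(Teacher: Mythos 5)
Your proposal is correct and follows essentially the same route as the paper: identify $\phi$ with $2\pi\delta_{(F(z),W)}/\mu$, conjugate by $e^{N(x)}t(y)$ so that Theorem~\ref{thm:rel-compact} bounds the $\delta$-invariant of the twisted period map, and observe that $\Ad(t(y))$ scales $\mu$ by $y_1$ (the paper phrases your commutator computation $[Y_0,\mu]=-2\mu$ as ``$\delta$ lowers the weight filtration by $2$''), giving $\phi=O(|\log|s_1||)$ and hence local integrability. Your explicit treatment of the Weyl chambers $y_{\sigma(1)}\geq\cdots\geq y_{\sigma(r)}$ and of the lower-dimensional strata is a welcome elaboration of steps the paper leaves implicit.
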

\begin{proof} Let $F:U^r\to\mathcal M$ denote the lift of the period map
of $\mathcal V$ to the product of upper half-planes, and $t(y)$ be the
associated family of semsimple automorphisms~\eqref{eq:t-y}.  Then,
since $\mu$ is central in $\mathfrak g_{\mathbb C}$  and
$$
     F(z_1 + a_1,\dots,z_r + a_r) = e^{\sum_j\, a_j N_j}.F(z)
$$
for $a_1,\dots,a_r\in\mathbb Z$ and $\mu$ is central in $\mathfrak g_{\mathbb C}$
it follows from \eqref{eq:delta-eq} that 
 $$
       \delta_{(F(z_1 + a_1,\dots,z_r + a_r),W)}
       = \delta_{(F(z),W)}
$$ 
Likewise, $\delta_{(e^{-N(x)}F(z),W)} = \delta_{(F(z),W)}$ since $N(x)$ is
real.  By equation~\eqref{eq:metric}, it then follows that
$$
       |\mathcal V|_s = e^{-2\pi\delta_{(F(z),W)}/\mu}
$$
for any lifting of $s\in\Delta^{*r}$ to $z\in U^r$, i.e.
$\phi = 2\pi\delta_{(F(z),W)}/\mu$.  Returning to \eqref{eq:delta-eq}, 
we see that since $t(y)$ is real and preserves the weight filtration,
$$
\aligned
       \delta_{(F(z),W)} 
       &= \delta_{(e^{-N(x)}.F(z),W)}                  \\
       &= \delta_{(t(y)t^{-1}(y)e^{-N(x)}.F(z),W)}    
        = t(y)\delta_{(t^{-1}(y)e^{-N(x)}.F(z),W)}
\endaligned
$$
As above, the factors $y_j^{-H_j/2}$ appearing $t(y)$ act trivially
on $\delta$, and hence
$$
\aligned
       \delta_{(F(z),W)} 
       &= y_1^{-Y_0/2}.\delta_{(t^{-1}(y)e^{-N(x)}.F(z),W)}      \\
       &= y_1 \delta_{(t^{-1}(y)e^{-N(x)}.F(z),W)}
        = -\frac{1}{2\pi}\log|s_1| 
           \delta_{(t^{-1}(y)e^{-N(x)}.F(z),W)}
\endaligned
$$
since $\delta$ lowers the weight filtration by $2$, which is the
maximum possible.  Combining the above, it then follows that
\begin{equation}
      \phi = -\log|s_1|\delta_{(t^{-1}(y)e^{-N(x)}.F(z),W)}/\mu
           \label{eq:phi-compact}
\end{equation}
which is locally integrable since $t^{-1}(y)e^{-N(x)}.F(z)$ takes values
in a relatively compact subset of $\mathcal M$, so the corresponding
$\delta$-values are bounded, and $\log|s_1|$ is locally integrable.
\end{proof}

\section{Reduced Limit Filtrations}  In this section, we considered
the reduced limit filtration in the case of a several variable degeneration
of Hodge structure $\mathcal V\to\Delta^{*r}$, and show this filtration 
remains the same even along sequences 
$
         s(m) = (s_1(m),\dots,s_r(m))
$
for which the components $s_j(m)$ go to zero at radically different rates.

\subsection{Complement to SL(2)-orbit theorem}
Let $(N_1,\ldots ,N_r,F)$ be a pure nilpotent orbit of weight $w$, and let 
$(\rho ,\phi)$ be the associated $SL_2$-orbit.
We denote by $\hat{N}_j^{\pm}$ (resp. $H_j$) the image of the $j$-th 
$\mathbf{n}_{\pm}$ (resp. $\mathbf{h}$) where 
$(\mathbf{n}_-,\mathbf{h},\mathbf{n}_+)$ are the standard generators of 
$sl_2(\RR)$, and we write $H_{(r)}=H_1+\cdots +H_r$.
By the bracket relations of the $sl_2$-triples, we have
$$[\sum_{j=1}^{r}\frac{1}{y_j} \hat{N}^{+}_j, \sum_{j=1}^r y_j\hat{N}^-_j]=H_{(r)},
\quad [H_{(r)},\sum_{j=1}^r\hat{N}^{\pm}_j]=\pm 2\sum_{j=1}^r\hat{N}^{\pm}_{j}.$$
and hence 
$(\sum_{j=1}^r y_j\hat{N}^-_j, H_{(r)}, \sum_{j=1}^r \frac{1}{y_j}\hat{N}^+_{j})$ 
is also an $sl_2$-triple.

Let $g_j(y_1/y_{j+1},\ldots ,y_j/y_{j+1})$ for $1\leq j\leq r-1$ be the 
$G_{\RR}$-valued functions defined in the $SL_2$-orbit theorem \cite{CKS}.
We define
$$g(y)=\prod_{j=1}^{r-1}g_j(y_1/y_{j+1},\ldots ,y_j/y_{j+1})$$
where $y_1/y_2,\ldots ,y_r/y_{r+1}$ are sufficiently large.
We write $N(y)=\sum_j y_jN_j$.
By the $SL_2$-orbit theorem, $g(y)$ commutes with $H_{(r)}$ and 
$N(y)=\Ad{(g(y))}\hat{N}^-(y)$ where $\hat{N}^-(y)=\sum_{j=1}^{r}y_j\hat{N}^-_j$.
Therefore,
\begin{align}\label{triple}
\Ad{(g(y))}(\sum_{j=1}^r y_j\hat{N}^-_j, H_{(r)},
            \sum_{j=1}^r \frac{1}{y_j}\hat{N}^+_{j}) =(N(y),H_{(r)}, N^{+}(y))
\end{align}
is an $sl_2$-triple where 
$N^{+}(y)=\Ad{(g(y))}\sum_{j=1}^r \frac{1}{y_j}\hat{N}^+_{j}$.

\par Now $(N(y),F)$ generates a one variable nilpotent orbit with associated
$SL_2$-orbit defined by $(N(y),\hat F)$ where 
$$
     (\hat F,W(N)[-w]) = (e^{-\epsilon}.F,W(N)[-w])
$$
is the $sl_2$-splitting of $(F,W(N)[-w])$, for $N$ any element in the 
monodromy cone of positive linear combinations of $N_1,\dots,N_r$. 
The corresponding semisimple element $H$ of this $SL_2$-orbit coincides 
with $H_{(r)}$ above, and hence the associated $sl_2$-triple is 
$(N(y),H_{(r)},N^+(y))$.

\par As in the proof of Lemma $(3.12)$~\cite[p. 478]{CKS}, it then follows
that
\begin{align*}
\exp{(zN(y))}\hat{F} =\exp{(\frac{1}{z} N^+(y))}\Phi,\qquad
\Phi^p = \sum_{b\leq w-p}I^{a,b}_{(\hat{F},W(N)[-w])}
\end{align*}
Substituting $z=i$, we have
\begin{align}\label{nilp-conv}
\exp{(iN(y))}\hat{F} =\exp{(-i N^+(y))}.\Phi
\end{align}
Therefore, along any sequence $y(m)$ to which the $SL_2$-orbit theorem applies 
(i.e. $y_j(m)/y_{j+1}(m)$ sufficiently large as $m\to\infty$),
$$
    e^{iN(y(m))}.F = e^{\epsilon}e^{iN(y(m))}.\hat F = e^{\epsilon}e^{-iN^+(y(m))}.\Phi
$$
and hence the limit as $m\to\infty$ is again $\Phi$.

\subsection{Satake Boundary Components}\label{Satake}  As noted in the introduction, by the
work of Kato and Usui, the theory of degenerations of Hodge structure can
be used to construct generalizations of the toroidal compactifications 
of~\cite{AMRT} by adjoining spaces of nilpotent orbits as boundary 
components.  In analogy with this classical theory, it is natural to consider 
the reduced limit period map as providing a generalization of the Satake
construction (see \cite[Theorem 5.21(b)]{KP2} for detail).  

\par To obtain an explicit connection with the standard Satake construction,
let $\mathcal{D}$ be a classifying space of effective Hodge structures of
weight $2k-1$ upon which the Lie group $G_{\mathbb R}$ acts transitively.  
Let $V\subset G_{\mathbb R}$ be the stabilizer of a reference Hodge structure
$F\in\mathcal{D}$ and $K$ be the stabilizer of 
$$
       S = H^{2k-1,0}\oplus H^{2k-3,2}\oplus\cdots
$$
Then, $K$ is a maximal compact subgroup of $G_{\mathbb R}$ containing $V$
and the quotient $G_{\mathbb R}/K$ is a Siegel space $\mathcal{H}$ 
parametrizing the Hodge structures $S\oplus\bar S$ polarized 
by $Q$.  Via a Tate-twist, we  now renormalize $\mathcal{D}$ and 
$\mathcal{H}$ to have weight $-1$ and let $p:\mathcal{D}\to\mathcal{H}$ 
denote the real-analytic quotient map $G_{\mathbb R}/V\to G_{\mathbb R}/K$, i.e.
$$
    F^0 p\left(\{H^{\bullet,\bullet}\}\right) 
        = \bigoplus_{\text{$p$ even}}\, H^{p,-p-1}.
$$

\par Let $(N_1,\ldots ,N_r,F)$ be a nilpotent orbit with nilpotent 
cone $\sigma=\sum_{j=1}^{r} \RR_{\geq 0}N_j$ of even-type as defined in 
\cite[Definition~2.6]{H1} or \cite[Lemma~2.4]{Ha2}. 
That is $N^2=0$ for $N\in\sigma$ and $I^{p,-1-p}_{(F,W)}=0$ if $p$ is odd where $W=W(\sigma)[1]$ is the 
monodromy weight filtration.
Then we have the Satake 
boundary component  
$$
  B_S(\sigma)=\{F\in \mathit{cl}( \mathcal{H}) \;|
                \; F^{0}\cap \overline{F^{0}}=W_{-2,\CC}\}
$$
contained in the closure $\mathit{cl}( \mathcal{H})$ of the compact dual 
$\check{\mathcal{H}}$ of $\mathcal{H}$.

\par On the other hand, we have the Kato-Usui boundary component $B(\sigma)$ 
of $\mathcal{D}$ which is the set of $\sigma$-nilpotent orbits modulo 
$\exp{(\sigma_{\CC})}$. In \cite[Corollary~2.8]{H1}, the first author defined 
a map  $p_{\sigma}:B(\sigma)\to B_S(\sigma)$, where 
$\Psi:=p_{\sigma}(\sigma,F)$ is given by
\begin{equation}
   \Psi^0=(\bigoplus_{p \text{ even}}I^{p,-1-p}_{(F,W)})\oplus W_{-2,\CC}
    \label{eq:reduced-Psi}
\end{equation}
{\it n.b.} $p_{\sigma}$ is well defined since 
$e^{N}\Psi=\Psi$ for all $N\in\sigma_{\CC}$.

\par Let $(\sigma,\hat F)$ generate a nilpotent orbit of even type with
limit mixed Hodge structure split over $\mathbb R$ and 
\begin{equation}
     \tilde F^0 = (\bigoplus_{\text{$p$ even}}\, I^{p,-p-1}_{(\hat{F},W)}) 
                  \oplus(\bigoplus_p\, I^{p,-p}_{(\hat{F},W)})        \label{eq:reduced-F}
\end{equation}
Then, by Proposition $(2.10)$ of~\cite{H1} it follows that
$(\sigma,\tilde F)$ generates a nilpotent orbit 
$e^{\sum_j\, z_j N_j}.\tilde F\in\mathcal{H}$ with limit split over $\mathbb R$ such that
$$
      p(e^{\sum_j\, z_j N_j}\hat{F})= e^{\sum_j\, z_j N_j}\tilde F
$$
By the results of \S 6.1, the orbit $ e^{\sum_j\, z_j N_j}\tilde F$ has
a reduced limit $\tilde \Phi$.  Comparing the defining equations
\eqref{eq:naive-1}, \eqref{eq:reduced-Psi} and \eqref{eq:reduced-F}
it follows that 
$$
               \tilde\Phi = \Psi\in B_S(\sigma)
$$
and hence $p_{\sigma}(\sigma,\hat F)$ is equal to the reduced limit of 
the orbit $(\sigma,\tilde F)$.  The same conclusion holds for 
odd type $\sigma$ defined in~\cite{H1}.

\subsection{Complement to~\cite{CK}}  In this subsection, we prove the
following consequence of equation~\eqref{der-Gamma}

\begin{thm}\label{thm:lim2.5} For any choice of branch cuts for 
$z_j = \frac{1}{2\pi i}\log(s_j)$, 
$$
       \lim_{s\to 0}\, \Ad(e^{\sum_j z_j N_j})\Gamma(s) = 0                
$$
\end{thm}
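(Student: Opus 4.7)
The plan is to expand $\Gamma$ as a convergent power series around $s=0$, refine Lemma~\ref{lem:CK2.6} to the level of individual Fourier coefficients, and then run a majorant argument in which the exponential decay of $|s|^\mu$ dominates the polynomial growth of $\Ad(e^{N(z)})$. First, since different branch choices of $\log s_j$ differ by integer shifts $z_j\mapsto z_j+n_j$, the element $e^{N(z)}$ changes only by the fixed factor $T=e^{\sum_j n_j N_j}\in G_{\CC}$, so $\Ad(e^{N(z)})\Gamma(s)$ changes only by the continuous conjugation $\Ad(T)$. It therefore suffices to treat a single branch; I would fix the principal branch, in which $|x_j|\leq 1/2$ and $y_j\to\infty$ as $s_j\to 0$. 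Expand
$$
  \Gamma(s)=\sum_{\mu\in\ZZ_{\geq 0}^r,\ \mu\neq 0}\Gamma_\mu\, s^\mu,
$$
convergently on a small polydisk.

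The crucial input is a coefficient-level strengthening of Lemma~\ref{lem:CK2.6}. Its proof sets all of $s_1,\dots,s_j$ to zero at once, but the same argument applied with only the single variable $s_j=0$ substituted into \eqref{der-Gamma} yields $e^{-\ad\Gamma|_{s_j=0}}N_j\in\wp_{-1}$ and hence, by the same grading consideration, $[N_j,\Gamma|_{s_j=0}]=0$ for each $j$ independently. Passing to Fourier coefficients gives
$$
  [N_j,\Gamma_\mu]=0\quad\text{whenever }\mu_j=0.
$$
Therefore $\Ad(e^{z_j N_j})$ acts as the identity on $\Gamma_\mu$ for every $j$ outside the support of $\mu$, and
$$
  \Ad(e^{N(z)})\Gamma_\mu
   =\Ad\!\Bigl(\exp\!\sum_{k:\,\mu_k\geq 1}z_k N_k\Bigr)\Gamma_\mu
$$
is a polynomial in $\{z_k:\mu_k\geq 1\}$ of total degree at most some constant $d$ dictated by the nilpotencies of the commuting $N_k$ (uniform in $\mu$).

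Termwise one then bounds
$$
  \|\Ad(e^{N(z)})\Gamma(s)\|
  \leq C\sum_{\mu\neq 0}\|\Gamma_\mu\|\prod_{k:\,\mu_k\geq 1}|s_k|^{\mu_k}(1+|z_k|)^d.
$$
On the principal branch $|z_k|\leq \tfrac12+y_k$ and $|s_k|^{1/2}=e^{-\pi y_k}$, so the elementary inequality $e^{-\pi y_k}(1+y_k)^d\leq M$ (uniform in $y_k\geq 0$) yields $|s_k|^{\mu_k}(1+|z_k|)^d\leq M|s_k|^{\mu_k/2}$ whenever $\mu_k\geq 1$. Writing $\tilde s_k:=|s_k|^{1/2}$, this majorizes the whole expression by
$$
  CM^r\sum_{\mu\neq 0}\|\Gamma_\mu\|\,\tilde s^\mu,
$$
a convergent power series in $\tilde s$ (by Cauchy estimates on $\Gamma$) which is continuous at $\tilde s=0$ and vanishes there. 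Since $\tilde s\to 0$ as $s\to 0$, the conclusion follows. The one non-trivial step is the Fourier-coefficient refinement of Lemma~\ref{lem:CK2.6}, without which one only gets commutation with $N_k$ for $k$ less than the first nonzero index of $\mu$, which is insufficient when the $y_k$ diverge at very different rates; everything after that is a routine exponential-beats-polynomial majorization.
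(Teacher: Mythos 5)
Your proof is correct and follows essentially the same route as the paper: the key input $[N_j,\Gamma|_{s_j=0}]=0$, which you present as a single-variable refinement of Lemma~\ref{lem:CK2.6}, is exactly the property the paper's proof of that lemma already establishes and then cites as property (iv) in its argument, and your termwise majorization of $\sum_\mu s^\mu\,\Ad(e^{N(z)})\Gamma_\mu$ is a reorganization of the paper's observation that $s^{|J|}$ divides $A_J\Gamma(s)$, so that each term $(\log s)^J A_J\Gamma(s)\to 0$. The only genuine additions are your explicit reduction of the branch-cut ambiguity to conjugation by a fixed $\Ad(T)$ and the slightly more careful uniform-convergence bookkeeping via the majorant series in $\tilde s_k=|s_k|^{1/2}$.
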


Define the length of $\mathfrak q$ to be the smallest non-negative integer
$\lambda$ such that $\wp_{-a} = 0$ for $a>\lambda$.  Recall that
\begin{itemize}
\item[(i)] $\mathfrak q = \oplus_{a<0}\,\wp_a$;
\item[(ii)] $[\wp_a,\wp_b] \subseteq \wp_{a+b}$;
\item[(iii)] $N_j\in\wp_{-1}$;
\item[(iv)] $[N_j,\Gamma|_{s_j=0}] = 0$.
\end{itemize}

Given a multi-index $J=(a_1,\dots,a_r)$ with non-negative
entries define 
$$
    \deg_j(J) = a_j,\qquad (\log s)^J = \Pi_j\, (\log s_j)^{\deg_j(J)},\qquad
    \deg(J) = \sum_j\, \deg_j(J)
$$ 
By the commutativity of the 
$N_j$'s, and properties (i)--(iii) above, we have
\begin{equation}
         \Ad(e^{\sum_j z_j N_j})\Gamma(s)
         = \left(\sum_{\deg(J)<\lambda}\, (\log s)^J A_J\right) \Gamma(s) 
         \label{eq:lim2.6}
\end{equation}
where $A_J = b_J \Pi\, (\ad N_j)^{\deg_j(J)}$ for an appropriate constant
$b_J>0$ and $\lambda$ is the length of $\mathfrak q$.

\par Given a $\mathfrak{q}$-valued holomorphic function $f(s_1,\dots,s_r)$ which vanishes at
$s=0$ let 
\begin{equation}
       f(s) =\sum_{\deg(K)>0}\, c_K s^K f_K             \label{eq:lim2.7}
\end{equation}
denote the Taylor expansion of $f$ about $s=0$ where 
$s^K = \Pi\, s_j^{\deg_j(K)}$ and 
$$
      f_K = \left.
            \frac{\pd^{\deg(K)}f}{\pd s_1^{\deg_1(K)}\cdots\pd s_r^{\deg_r(K)}}
            \right|_0
$$
Combining \eqref{eq:lim2.6} and \eqref{eq:lim2.7} and using the fact that 
$
        \lim_{t\to 0} |t|^{\ell}\log|t| = 0
$ 
for $\ell>0$, it then follows that in order to show that \eqref{eq:lim2.6}
converges to zero, it is sufficient to prove [see Lemma\eqref{lem:lim2.9}] 
that 
\begin{equation}
      \deg_j(K) = 0 \implies [N_j,\Gamma_K] = 0            \label{eq:lim2.8}
\end{equation}
To verify $\eqref{eq:lim2.8}$, we note that $\deg_j(K)=0$ implies that
$$
      \Gamma_K = \left.\frac{\pd^K}{\pd s^K}\Gamma |_{s_j=0}\right|_0        
$$
Therefore, by $(iv)$ and the fact that Lie brackets commute with
derivatives, we have $[N_j,\Gamma_K] = 0$. To complete the proof of Theorem 
\eqref{thm:lim2.5} it remains to establish:

\begin{lem}\label{lem:lim2.9} $(\log s)^J A_J \Gamma(s)\to 0$ for any
multi-index $J$ with non-negative entries.
\end{lem}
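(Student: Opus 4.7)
The plan is to reduce everything to the vanishing condition~\eqref{eq:lim2.8} by feeding the Taylor expansion~\eqref{eq:lim2.7} of $\Gamma(s)$ into $A_J\Gamma(s)$, and then to absorb the logarithmic factors using the elementary estimate $\lim_{t\to 0} t(\log t)^m = 0$ for $m \geq 0$.

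First I would expand $A_J$ termwise against~\eqref{eq:lim2.7} to obtain
$$
A_J \Gamma(s) = b_J \sum_{\deg(K)>0} c_K\, s^K \Bigl(\prod_i (\ad N_i)^{\deg_i(J)}\Bigr) \Gamma_K.
$$
Fix a multi-index $K$ and suppose there exists some $j$ with $\deg_j(J)\geq 1$ but $\deg_j(K)=0$. Since the $N_i$ commute, so do the operators $\ad N_i$, and I may reorder the product defining $A_J$ so that $\ad N_j$ is applied first to $\Gamma_K$. By~\eqref{eq:lim2.8} this gives $(\ad N_j)\Gamma_K = 0$, hence $A_J\Gamma_K = 0$. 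Consequently only multi-indices $K$ with $\deg_j(K)\geq 1$ for every $j$ in the support $J^* := \{j : \deg_j(J)\geq 1\}$ survive, and I conclude that there is a $\mathfrak{q}$-valued function $h$, holomorphic in a neighborhood of $0\in\Delta^r$, such that
$$
A_J\Gamma(s) = h(s) \prod_{j\in J^*} s_j.
$$

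It then remains to note that $(\log s)^J = \prod_{j\in J^*} (\log s_j)^{\deg_j(J)}$, so
$$
(\log s)^J A_J\Gamma(s) = h(s) \prod_{j\in J^*} s_j (\log s_j)^{\deg_j(J)}.
$$
Since $h(s)$ is bounded near the origin and each factor $s_j(\log s_j)^{\deg_j(J)}$ tends to $0$ as $s_j\to 0$, the whole expression tends to $0$, independently of the relative rates at which the coordinates degenerate. The only mild subtlety is justifying the reordering of the product in $A_J$; this follows at once from the pairwise commutativity of the $\ad N_i$, itself a consequence of the commutativity of the monodromy logarithms, so I do not anticipate any serious obstacle.
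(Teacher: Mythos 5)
Your proposal is correct and follows essentially the same route as the paper: your support set $J^*$ is the paper's $|J|$, your surviving-coefficient condition is the paper's $J\cdot K=0$, and your factorization $A_J\Gamma(s)=h(s)\prod_{j\in J^*}s_j$ is exactly the paper's divisibility statement $s^{|J|}\mid A_J\Gamma(s)$, finished off by the same elementary limit $s_j(\log s_j)^m\to 0$.
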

\begin{proof} We can assume that $\deg(J)<\lambda$.  Otherwise 
$A_J\Gamma(s) = 0$. For a pair of multi-indices $J$ and $K$ define
$$
       J\cdot K = \max(\deg_j(J) \mid \deg_j(K) = 0)
$$
Then, $J\cdot K >0$ if and only if there is some index $j$ such that
$\deg_j(J)>0$ and $\deg_j(K) = 0$.  Therefore, by \eqref{eq:lim2.8} and the
commutativity of $N_i$'s:
$$
      J\cdot K> 0 \implies (A_J\Gamma)_K 
                            = b_J \Pi\, (\ad N_i)^{\deg_i(J)}\Gamma_K = 0
$$
As such,
\begin{equation}
     A_J\Gamma(s) 
        = \sum_{\deg(K)>0,\, J\cdot K=0}\,
             c_K s^K (A_J\Gamma)_K                      \label{eq:lim2.10}
\end{equation}

\par To continue, let $|J| = \{j \mid \deg_j(J)>0\}$ and 
$s^{|J|}=\Pi_{j\in |J|}\, s_j$.  Observe that 
$J\cdot K = 0$ implies that $s^{|J|} | s^K$ in $\CC[s_1,\dots,s_r]$ and hence
by \eqref{eq:lim2.10},
\begin{equation}
        s^{|J|} | A_J\Gamma(s)                         \label{eq:lim2.11}
\end{equation}
in $\mathcal O$.  Therefore,
$$
      (\log s)^J A_J \Gamma(s) 
        = ((\log s)^J s^{|J|})(A_J\Gamma(s)/s^{|J|})
$$
where the first factor $((\log s)^J s^{|J|})\to 0$ as $s\to 0$ by 
elementary calculus and the second factor $A_J\Gamma(s)/s^{|J|}$ is
bounded near $s=0$ by \eqref{eq:lim2.11}.
\end{proof}

\subsection{Convergence of Reduced Limit} Let $F:U^r\to\mathcal D$ be the period map of a variation of pure
Hodge structure of weight $k$ over $\Delta^{*r}$ as in \eqref{eq:basic-diagram} with associated nilpotent 
orbit generated by $(N_1,\dots,N_r;F)$.  Let $\Phi$ be the naive limit filtration \eqref{eq:naive-2} attached 
to the limit mixed Hodge structure 
$$
          (F,W(N)[-k]),\qquad N = \sum_j\, N_j
$$
In this section we prove: 

\begin{thm}\label{thm:rl-0} If $z(m) = x(m) + iy(m)\in U^r$ is a sequence of points such that
\begin{itemize}
\item[(a)] $x(m)$ is bounded;
\item[(b)] Each component $y_j(m)$ of $y(m)$ diverges to $\infty$;
\end{itemize} 
Then $F(z(m))\to\Phi$.
\end{thm}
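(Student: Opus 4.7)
The plan is to reduce, via the local normal form and Theorem~\ref{thm:lim2.5}, first to the convergence of the nilpotent orbit $e^{N(z(m))}.F_\infty$, then to the purely imaginary case $e^{iN(y(m))}.F_\infty$, and finally to invoke the several variable $SL_2$-orbit theorem together with a grouping argument and the Cattani--Kaplan constancy of $\Phi$ on the interior of the monodromy cone.

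First, using the local normal form $F(z)=e^{N(z)}e^{\Gamma(s)}.F_\infty$ with $\Gamma(0)=0$, I would rewrite
$$
F(z)=\exp\!\bigl(\Ad(e^{N(z)})\Gamma(s)\bigr)\cdot e^{N(z)}.F_\infty.
$$
By Theorem~\ref{thm:lim2.5}, $\Ad(e^{N(z(m))})\Gamma(s(m))\to 0$ in $\frakg_{\CC}$, so the conjugating factor tends to $1\in G_{\CC}$, and continuity of the $G_{\CC}$-action on $\check{\mathcal D}$ reduces the problem to showing $e^{N(z(m))}.F_\infty\to\Phi$.

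Next, decompose $e^{N(z(m))}=e^{N(x(m))}e^{iN(y(m))}$. Since $x(m)$ is bounded, after passing to a subsequence $x(m)\to x_0\in\RR^r$. Each $N_j$ is a $(-1,-1)$-morphism of the limit MHS $(\hat F,W(N)[-k])$, and from the defining formula $\Phi^p=\bigoplus_{b\leq k-p}I^{a,b}_{(\hat F,W(N)[-k])}$ one checks that $N_j(\Phi^p)\subseteq\Phi^{p+1}\subseteq\Phi^p$. Hence $e^{N(x_0)}$ preserves $\Phi$, and by continuity of the $G_{\CC}$-action it suffices to prove $e^{iN(y(m))}.F_\infty\to\Phi$.

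For the remaining step, reorder the indices and pass to a subsequence so that $y_1(m)\geq\cdots\geq y_r(m)\to\infty$ and every ratio $y_j(m)/y_{j+1}(m)$ converges in $[1,\infty]$. Group the indices into maximal consecutive blocks $I_1,\ldots,I_s$ within which the limiting ratio is finite. Let $\tilde y_p(m)=y_{\min I_p}(m)$ and set $\tilde N_p=\sum_{j\in I_p}c_j N_j$ with $c_j=\lim y_j(m)/\tilde y_p(m)\in(0,1]$. By the Cattani--Kaplan invariance of the weight filtration and limit MHS on the interior of the monodromy cone, the rescaled orbit $(\tilde N_1,\ldots,\tilde N_s;F_\infty)$ is an admissible nilpotent orbit with the same limit MHS $(F_\infty,W)$ and the same reduced limit filtration $\Phi$. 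The block variables $\tilde y_p(m)$ now satisfy the separation-of-scales hypothesis $\tilde y_p(m)/\tilde y_{p+1}(m)\to\infty$ of the several variable $SL_2$-orbit theorem, so the computation of Section~6.1 culminating in equation~\eqref{nilp-conv} yields $e^{i\sum_p\tilde y_p(m)\tilde N_p}.F_\infty\to\Phi$. The remaining discrepancy $N(y(m))-\sum_p\tilde y_p(m)\tilde N_p$ consists of commuting within-block terms whose coefficients are $o(\tilde y_p(m))$; combined with Lemma~\ref{lem:P-function} applied to the rescaled variables, this absorbs into a perturbation of $N^+$ which still tends to zero, and the limit $\Phi$ is preserved.

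The main obstacle is the multi-scale behavior of $y(m)$: the argument in Section~6.1 rests on $y_j(m)/y_{j+1}(m)\to\infty$, whereas the theorem permits these ratios to remain bounded. The grouping above, combined with the constancy of $\Phi$ on the interior of the monodromy cone, is what allows us to replace the original sequence by a rescaled one to which the separation-of-scales hypothesis of the several variable $SL_2$-orbit theorem applies; controlling the within-block perturbations in the exponent is the most delicate part of the argument.
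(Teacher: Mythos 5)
Your overall strategy coincides with the paper's: reduce to the nilpotent orbit via the local normal form and Theorem~\ref{thm:lim2.5} (this is exactly Lemma~\ref{lem:rl-A}), dispose of the real part using the fact that the $N_j$ are $(-1,-1)$-morphisms and hence fix $\Phi$, and then handle the multi-scale behavior of $y(m)$ by regrouping the variables so that the several variable $SL_2$-orbit theorem applies to a rescaled family of nilpotents whose sum lies in the interior of the original monodromy cone (so that $W$, the limit mixed Hodge structure, and hence $\Phi$ are unchanged). Up to and including the reduction to $e^{iN(y(m))}.F\to\Phi$ your argument is correct and matches the paper.

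The gap is in the final regrouping step. Your blocks are formed according to whether the limiting ratios $\lim_m y_j(m)/y_{j'}(m)$ are finite, and you replace $y_j(m)$ by $c_j\tilde y_p(m)$ with $c_j=\lim_m y_j(m)/\tilde y_p(m)$. The discrepancy $R(m)=N(y(m))-\sum_p\tilde y_p(m)\tilde N_p$ then has coefficients that are only $o(\tilde y_p(m))$; they need not be bounded. For example $y(m)=(m,\,m+\sqrt{m})$ produces a single block with $c_1=c_2=1$ and $R(m)=\sqrt{m}\,N_2$, which diverges. Your assertion that such an unbounded $R(m)$ ``absorbs into a perturbation of $N^+$ which still tends to zero'' is not justified: $e^{iR(m)}$ does fix $\Phi$ itself, but it is an unbounded family of operators, and applying it to $e^{\epsilon}e^{-iN^+}.\Phi$, which is only \emph{close} to $\Phi$, requires comparing the polynomial growth of $\Ad(e^{iR(m)})$ against the decay of $N^+$; Lemma~\ref{lem:P-function} does not by itself provide this. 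The paper avoids the issue with the finer notion of an $sl_2$-sequence: after passing to a subsequence one writes $y(m)=T(v(m))+b(m)$ where the virtual variables $v(m)$ are fully scale-separated --- possibly more of them than your blocks, so that sub-leading divergent behavior such as the $\sqrt{m}$ above becomes its own variable $N(\theta^j)$ --- and the remainder $b(m)$ is \emph{convergent}, whence $N(b(m))$ tends to a $(-1,-1)$ element fixing $\Phi$. The theorem for arbitrary sequences then follows from a subsequence/contradiction argument, since every subsequence admits a further $sl_2$-subsequence. To repair your proof you would need either to iterate your grouping on the divergent part of the remainder until what is left converges (which is precisely the inductive construction of $sl_2$-sequences), or to supply a genuine estimate controlling $\Ad(e^{iR(m)})$ applied to $\epsilon$ and to $N^+$.
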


\par The proof is by a series of Lemmata.  To state the first lemma, for any vector
$v = (v_1,\dots,v_r)\in\mathbb C^r$ define
$$
       N(v) = \sum_j\, v_j N_j
$$
As in the previous sections, $F(z) = e^{N(z)}e^{\Gamma(s)}.F$ will denote the local normal form of $F(z)$.
Accordingly, given $z(m)$ we define $s(m)$ to be the corresponding sequence in $\Delta^{*r}$ obtained by setting
$s_j(m) = e^{2\pi iz_j(m)}$.

\begin{lem}\label{lem:rl-A} Let $z(m)$ be a sequence as above and suppose that $e^{N(z(m))}F \to \Phi$.
Then,
\begin{equation}
         F(z(m))\to \Phi         \label{eq:rl-1}
\end{equation}
\end{lem}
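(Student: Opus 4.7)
The plan is to use the local normal form $F(z) = e^{N(z)} e^{\Gamma(s)}.F$ to separate the ``nilpotent orbit part'' from a correction which will be controlled by Theorem~\ref{thm:lim2.5}. Specifically, I would conjugate to write
$$
F(z(m)) = \exp\!\bigl(\Ad(e^{N(z(m))})\,\Gamma(s(m))\bigr)\cdot e^{N(z(m))}.F,
$$
so the desired convergence reduces to the two statements: (i) $\exp(\Ad(e^{N(z(m))})\Gamma(s(m))) \to 1$ in $G_{\CC}$, and (ii) $e^{N(z(m))}.F \to \Phi$. Assertion (ii) is precisely the standing hypothesis, and (i) will follow once we show that $\Ad(e^{N(z(m))})\Gamma(s(m)) \to 0$ in $\mathfrak g_{\CC}$. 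With both in hand, continuity of the $G_{\CC}$-action on $\check{\mathcal D}$ gives $F(z(m)) \to \Phi$.

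The key step is therefore to check that Theorem~\ref{thm:lim2.5} applies to the specific sequence $s(m) = e^{2\pi i z(m)}$. Since each $y_j(m)\to\infty$, we have $|s_j(m)| = e^{-2\pi y_j(m)} \to 0$, so $s(m)\to 0$ in $\Delta^r$. Because $x(m)$ is bounded, $z(m) = x(m) + i y(m)$ is a legitimate choice of branch of $\tfrac{1}{2\pi i}\log s(m)$ in the sense of Theorem~\ref{thm:lim2.5}, and in particular $|\log s_j(m)|$ grows no faster than a linear function of $y_j(m)$. Inspecting the proof of Theorem~\ref{thm:lim2.5} through Lemma~\ref{lem:lim2.9}, the decisive estimate is that each factor $(\log s)^J s^{|J|}$ tends to $0$ as $s\to 0$; this estimate is uniform on any set where $\arg s$ remains bounded, which the hypothesis on $x(m)$ supplies. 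Hence Theorem~\ref{thm:lim2.5} yields $\Ad(e^{N(z(m))})\Gamma(s(m)) \to 0$, and the exponential map is continuous at $0$, so (i) holds.

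The main (and essentially only) obstacle is a branch-cut subtlety: Theorem~\ref{thm:lim2.5} is written as an unconstrained limit $s\to 0$ for a fixed choice of branch, whereas our sequence is prescribed with particular real parts $x(m)$. This is a mild issue because the proof of Theorem~\ref{thm:lim2.5} is quantitative: it produces a bound of the form $\|\Ad(e^{N(z)})\Gamma(s)\| \le C(x)\,\eta(y)$ with $\eta(y)\to 0$ as $\min_j y_j\to\infty$ and $C(x)$ locally bounded, so the hypothesis that $x(m)$ is bounded controls $C(x(m))$. Once this verification is explicit, the rest of the argument is formal, invoking only the continuity of the exponential map, of $\Ad$, and of the action of $G_{\CC}$ on $\check{\mathcal D}$.
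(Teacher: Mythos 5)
Your proposal is correct and follows essentially the same route as the paper: write $F(z) = \bigl(\Ad(e^{N(z)})e^{\Gamma(s)}\bigr)\,e^{N(z)}.F$ via the local normal form, kill the conjugated factor using Theorem~\ref{thm:lim2.5}, and conclude from the hypothesis $e^{N(z(m))}.F\to\Phi$ together with continuity of the group action. Your extra care about the branch of $\log s_j$ and the boundedness of $x(m)$ is a reasonable elaboration of a point the paper leaves implicit, but it does not change the argument.
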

\begin{proof} Using the local normal form, we have
$$
       F(z) = \left(e^{\sum_j\, z_j N_j}e^{\Gamma(s)}e^{-\sum_j\, z_j N_j}\right)
               e^{\sum_j\, z_j N_j}.F
$$
By \S 6.3, 
$$
      \Ad(e^{N(z(m))})e^{\Gamma(s(m))} \to 1
$$
whereas $e^{N(z(m))}.F\to\Phi$ by assumption.  Combining these two observations gives \eqref{eq:rl-1}.
\end{proof}

\par The next step is to show that $e^{N(z(m))}.F\to\Phi$.  This was shown in \S 6.1 provided that 
$y_j(m)/y_{j+1}(m)\to\infty$ for $j=1,\dots,r$, where by convention we set $y_{r+1}(m) = 1$.
More generally, there is a constant $K$ such that the results of \S 6.1 hold whenever $y_j(m)/y_{j+1}(m)>K$.

\par To handle the general case, we import the notion of an $sl_2$-sequence from \cite{BP1}:  First, we can
partition $z(m)$ into a collection of subsequences on which some permutation of the following system of 
inequalities holds:
\begin{equation}
        y_1(m)\geq y_2(m)\geq\cdots\geq y_r(m)                   \label{eq:rl-2}
\end{equation}
Accordingly, we assume \eqref{eq:rl-2} hold for the remainder of this section.  Second, we can \lq\lq group\rq\rq{} strings of $y_j(m)$'s together which go to infinity as the same rate.  More precisely, we say that $y(m)$ is an 
$sl_2$-sequence if there exists
\begin{itemize}
\item[(i)]   A linear transformation $T:\mathbb R^d\to\mathbb R^r$;
\item[(ii)]  A sequence $v(m)\in\mathbb R^d_{\geq 0}$;
\item[(iii)] A convergent sequence $b(m)\in\mathbb R^r_{\geq 0}$
such that
$$
       y(m) = T(v(m)) + b(m)
$$
and $v_j(m)/v_{j+1}(m)\to\infty$.
\end{itemize}

\begin{lem} Let $y(m)$ be an $sl_2$-sequence such that $y_r(m)\to\infty$. Then,
$$
       e^{N(iy(m))}.F\to \Phi
$$
\end{lem}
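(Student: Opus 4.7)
The plan is to reduce, via the structure of an $sl_2$-sequence, to the regime of \S 6.1 where the ratios $y_j/y_{j+1}$ already diverge to infinity, absorbing the bounded correction $b(m)$ by continuity.

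First, I unpack the definition of an $sl_2$-sequence: $y(m)=T(v(m))+b(m)$ with $v_j(m)/v_{j+1}(m)\to\infty$ and $b(m)\to b^{*}\in\RR^{r}_{\geq 0}$. I set
$$
   M_k=\sum_{j=1}^{r}T_{jk}N_j\in\mathfrak g_{\RR},\qquad k=1,\dots,d,
$$
so that $\sum_{k}v_k(m)M_k=N(T(v(m)))$. The fact that the $v_k(m)$ grow at wildly different rates, combined with $y_j(m)\geq 0$ for $m$ large, forces $T$ to have non-negative entries, so each $M_k$ lies in the closed monodromy cone of $(N_1,\dots,N_r)$. Since the $N_j$ commute, so do the $M_k$, and $(M_1,\dots,M_d;F)$ generates a nilpotent orbit whose limit mixed Hodge structure is $(\hat F,W(N)[-k])$ with $N=N_1+\cdots+N_r$, because any positive element of the monodromy cone induces the same weight filtration.

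Next, using commutativity of the $N_j$, I write
$$
   e^{iN(y(m))}.F=e^{iN(b(m))}\cdot e^{i\sum_k v_k(m)M_k}.F.
$$
Since $v_j(m)/v_{j+1}(m)\to\infty$ and $v_d(m)\to\infty$ (the latter following from $y_r(m)\to\infty$ and the boundedness of $b(m)$), the one-parameter analysis of \S 6.1 applies to the grouped orbit and gives
$$
   e^{i\sum_k v_k(m)M_k}.F\;\longrightarrow\;\Phi_M,
$$
where $\Phi_M$ is the naive limit filtration \eqref{eq:naive-2} attached to the limit MHS of the grouped orbit. Since that limit MHS is identical to $(\hat F,W(N)[-k])$, the defining formula of $\Phi_M$ forces $\Phi_M=\Phi$.

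Finally, I handle the bounded correction. Because $b(m)\to b^{*}$, the factor $e^{iN(b(m))}$ converges to $e^{iN(b^{*})}$ in $G_{\CC}$. Each $N_j$ is a morphism of MHS of type $(-1,-1)$ for $(\hat F,W(N)[-k])$, so it sends $I^{r,s}_{(\hat F,W(N)[-k])}$ into $I^{r-1,s-1}_{(\hat F,W(N)[-k])}$ modulo terms strictly lower in both indices. Because $s\leq k-p$ implies $s-1\leq k-p$, each $N_j$ — and therefore $N(b^{*})$ and $e^{iN(b^{*})}$ — preserves $\Phi$. Combining the two limits yields
$$
   e^{iN(y(m))}.F\;\longrightarrow\;e^{iN(b^{*})}.\Phi=\Phi,
$$
as desired.

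The main obstacle in executing this plan is the bookkeeping required to verify that the grouped orbit genuinely satisfies the hypotheses used in \S 6.1: that each $M_k$ lies in the closed monodromy cone, that the limit MHS agrees with $(\hat F,W(N)[-k])$, and that the definition of an $sl_2$-sequence really does force the matrix $T$ to have non-negative entries. Once these are in place, the rest is a matter of continuity of the $G_{\CC}$-action and of the weight-lowering property of each $N_j$ on $\Phi$.
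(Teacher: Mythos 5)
Your proposal is correct and follows essentially the same route as the paper: decompose $N(y(m))=\sum_k v_k(m)N(\theta^k)+N(b(m))$ with $\theta^k=T(e_k)$, apply the \S 6.1 analysis to the grouped orbit (whose weight filtration and hence $\Phi$ agree with those of the original cone), and absorb the bounded factor $e^{iN(b(m))}$ using that $(-1,-1)$-morphisms preserve $\Phi$. The extra bookkeeping you flag (non-negativity of $T$, identification of the limit MHS) is exactly what the paper asserts in its items (1)--(3).
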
 
\begin{proof} Let $T:\mathbb R^d\to\mathbb R^r$ be the linear transformation associated to $y(m)$, 
$e_j\in\mathbb R^d$ be the j'th unit vector and $\theta^j = T(e_j)$.  Then,
$$
     N(y(m)) = \sum_j\, v_j(m)N(\theta^j) + N(b(m))
$$
The $SL_2$-orbit theorem now applies to the nilpotent orbit generated by the $N(\theta^j)$'s and $F$.  Therefore,
$$
      e^{iN(y(m))}F = e^{iN(b(m))}e^{\sum_j\, v_j(m)N(\theta^j)}F
      \to\Phi
$$
using the results of \S 6.1, because
\begin{itemize}
\item[(1)]  $\sum_j\,N(\theta^j)$ belongs to the interior of the monodromy cone of $N_1,\dots,N_r$ and hence
$$
          W(\sum_j\,N(\theta^j)) = W(N_1 + \cdots N_r)
$$
\item[(2)]  $\Phi$ depends only on the limit mixed Hodge structure $(F,W)$ of the underlying nilpotent orbit;
\item[(3)]  $N(b(m))$ converges to an element of type $(-1,-1)$ and all such elements preserve $\Phi$.
\end{itemize}     
\end{proof}

\par To finish the proof of Theorem \eqref{thm:rl-0}, suppose that $y(m)$ is a sequence of points satisfying
the system of inequalities \eqref{eq:rl-2} with $y_r(m)\to\infty$ for which $e^{iN(y(m))}.F$ does not converge
to $\Phi$.  Then, we can find a subsequence $y'(m)$ such that $e^{iN(y'(m))}.F$ remains outside a fixed open
neighborhood of $\Phi$.  Induction on the number of variables shows that we can always find a subsequence
$y''(m)$ of $y'(m)$ which is an $sl_2$-sequence.  By the above, $e^{iN(y''(m))}.F\to\Phi$, which contradicts
the fact that $e^{iN(y'(m))}.F$ avoids a neighborhood of $\Phi$.  Thus, $e^{iN(y(m))}.F\to\Phi$ and hence 
Lemma \eqref{lem:rl-A} implies Theorem \eqref{thm:rl-0}.

\subsection{Variations of Mixed Hodge Structure} In this section, we show that if $\theta(z) = (e^{zN}.F,W)$ is an 
admissible nilpotent orbit of graded-polarized mixed Hodge structure, and $\hat\theta(z)$ is the map obtained by
applying \eqref{eq:canonical-splitting} to $\theta(z)$ then
$$
              \Phi = \lim_{y\to\infty}\, \hat\theta(iy)
$$
exists. More generally, we prove an analogous statement for the map $F:U\to\mathcal M$ attached 
\eqref{eq:mixed-diagram} to an admissible variation of mixed Hodge structure $\mathcal V\to\Delta^*$ with 
unipotent monodromy.

\par In Example \eqref{exm:non-inv} we show that $\Phi$ need not be invariant under $N$.  In Example 
\eqref{exm:non-conv} we show that in several variables the reduced limit depends on how the underlying 
sequence of points in $U^r$ goes to infinity.  

\par Unless otherwise noted, for the remainder of this section $z = x+iy$ is a point in the upper half-plane $U$, 
and $z(m) = x(m) + iy(m)$ is a sequence of points in $U$ such that $x(m)$ is convergent and $y(m)\to\infty$.
Let $(\hat F(z),W)$ be the associated family of mixed Hodge structures obtained by applying 
\eqref{eq:canonical-splitting} to $(F(z),W)$.  Define 
$$
           \tilde F(z) = e^{-xN}\hat F(z),\qquad 
           \tilde Y(z) = Y_{(\tilde F(z),W)} = \Ad(e^{-xN})Y_{(\hat F(z),W)}
$$ 
where $N$ is the monodromy logarithm of $\mathcal V$.  In the case where $F(z)$ is the nilpotent orbit 
$\theta(z)$, $\tilde F(z) = \hat\theta(iy)$.  Theorem $(2.30)$ of \cite{BP1} implies: 

\begin{lem} 
\begin{equation}
           \lim_{m\to\infty}\, \tilde Y(z(m)) = Y^0           \label{eq:thm2.30}
\end{equation}
where $Y^0$ is the grading of $W=W^0$ appearing in \eqref{eq:t-y}.
\end{lem}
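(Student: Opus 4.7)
The plan is to derive this from the one-variable $SL_2$-orbit theorem for admissible nilpotent orbits, i.e.\ Theorem $(2.30)$ of \cite{BP1}. The first step is to reduce from $F(z)$ to the associated nilpotent orbit $\theta(z) = e^{zN}.F_\infty$. Using the local normal form $F(z) = e^{zN}e^{\Gamma(s)}.F_\infty$ with $\Gamma(s) \to 0$ as $s \to 0$, the mixed Hodge structures $(F(z),W)$ and $(\theta(z),W)$ become asymptotically equal as $y\to\infty$ with $x$ bounded. Since $\epsilon$ is a universal Lie polynomial in the Hodge components of Deligne's $\delta$-splitting, and these depend continuously on the underlying MHS, the Deligne gradings attached to $(\hat F(z),W)$ and $(\hat\theta(z),W)$ differ by an amount that vanishes in the limit. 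Hence it suffices to prove $\Ad(e^{-xN})\, Y_{(\hat\theta(z),W)} \to Y^0$.

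For the nilpotent orbit, the plan is to apply the asymptotic form \eqref{eq:asymp-non-split} of the $SL_2$-orbit theorem in one variable,
$$
    e^{iyN}.F_\infty = t(y)\,{}^e g(y)\,e^{\epsilon(y)}.\mathbf{r},
$$
where $\mathbf{r}\in\mathcal{M}_{\mathbb R}$ is $\mathbb R$-split, ${}^e g(y) \to 1$, and $\epsilon(y)$ has a finite limit. Passing to the canonical splitting and using the functoriality of $\epsilon$ under real transformations preserving $W$ (Remark \eqref{rem:splitting}), one obtains that $\hat\theta(z)$ is asymptotic to $e^{xN}\cdot t(y).\mathbf{r}$ up to corrections that vanish as $y\to\infty$. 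By the functoriality of the Deligne grading under $G_{\mathbb R}$, this yields
$$
    \tilde Y(z) = \Ad(t(y))\, Y_{(\mathbf{r},W)} + o(1).
$$

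The remaining step, which is the main technical obstacle, is the identification of $\Ad(t(y))\, Y_{(\mathbf{r},W)}$ with $Y^0$. Since $Y^0$ commutes with $H$ by construction of the Deligne system, $\Ad(t(y)) = \Ad(y^{-Y^0/2}y^{-H/2})$ fixes $Y^0$, so it suffices to check that $Y_{(\mathbf{r},W)} = Y^0$. This last identification is precisely the characterization of the split base point $\mathbf{r}$ via the Deligne system, and constitutes the essential content of Theorem $(2.30)$ of \cite{BP1}, which I would invoke for the detailed verification of the $y$-asymptotics of the canonical splitting.
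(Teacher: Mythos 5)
Your proposal and the paper end up in the same place: the paper offers no argument for this lemma beyond the sentence ``Theorem $(2.30)$ of \cite{BP1} implies,'' and your sketch likewise defers the decisive identification to that same theorem. So there is no real divergence of method, only the question of whether your intermediate reductions are sound.

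The one step that would fail as written is the continuity argument in your first paragraph. You pass from $F(z)=\bigl(\Ad(e^{zN})e^{\Gamma(s)}\bigr).\theta(z)$ with $\Ad(e^{zN})e^{\Gamma(s)}\to 1$ (Theorem~\eqref{thm:lim2.5}) to the conclusion that $Y_{(\hat F(z),W)}-Y_{(\hat\theta(z),W)}\to 0$ ``since these depend continuously on the underlying MHS.'' Continuity of $F\mapsto Y_{(\hat F,W)}$ on $\mathcal M$ is not enough, because both $F(z)$ and $\theta(z)$ leave every compact subset of $\mathcal M$ as $y\to\infty$; near $\partial\mathcal M$ this map has no uniform modulus of continuity, and indeed the unsplit grading $Y_{(F(z),W)}=\Ad(e^{\epsilon(F(z),W)})Y_{(\hat F(z),W)}$ typically diverges polynomially in $y$. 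The repair is the one used throughout \S 3: conjugate by $t^{-1}(y)$, note that $t^{-1}(y)e^{-xN}.F(z)$ and $t^{-1}(y)e^{-xN}.\theta(z)$ lie in a relatively compact subset of $\mathcal M$ by Theorem~\eqref{thm:rel-compact}, observe that $\Ad(t^{-1}(y)e^{-xN})$ distorts the exponentially small $\log\bigl(\Ad(e^{zN})e^{\Gamma(s)}\bigr)$ only by powers of $y$, and then apply uniform continuity of $F\mapsto Y_{(\hat F,W)}$ on compacta before conjugating back by the real, $W$-preserving element $t(y)e^{xN}$. The same caveat applies to your second reduction, from $\hat\theta(z)$ to $e^{xN}t(y).\mathbf r$, where again the error terms ${}^e g(y)\to 1$ and $\epsilon(y)\to\epsilon_\infty$ must be compared after untwisting, not before. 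With those repairs, and granting $Y_{(\mathbf r,W)}=Y^0$ from \cite{BP1}, your argument closes and is consistent with the citation the paper itself relies on.
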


Likewise, we have:

\begin{lem} 
\begin{equation}
            \lim_{m\to\infty}\, \tilde F(z) Gr^W_k = \lim_{m\to\infty} F(z(m))Gr^W \label{eq:same-lim}
\end{equation}
\end{lem}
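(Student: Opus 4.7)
The idea is to reduce the statement to the pure one-variable case on each graded piece $\gr^W_k$, where Theorem~\ref{thm:rl-0} directly applies.

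First I would use the fact that $\epsilon(F,W)\in\Lambda^{-1,-1}=\bigoplus_{a,b<0}\mathfrak{g}^{a,b}$ strictly lowers $W$, and hence acts as zero on each $\gr^W_k$. Combined with the definition $\hat F(z)=e^{-\epsilon(F(z),W)}.F(z)$ from \eqref{eq:canonical-splitting}, this yields $\hat F(z)(\gr^W_k) = F(z)(\gr^W_k)$, and consequently
$$
   \tilde F(z(m))(\gr^W_k) \;=\; e^{-x(m)\,\gr^W_k(N)}\,F(z(m))(\gr^W_k).
$$
So the question reduces to comparing the limits of $F(z(m))(\gr^W_k)$ and of $e^{-x(m)\gr^W_k(N)}F(z(m))(\gr^W_k)$.

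Second, I would observe that $F(\cdot)(\gr^W_k)$ is the period map of a one-variable pure variation of Hodge structure of weight $k$ with unipotent monodromy logarithm $\gr^W_k(N)$, since $\mathcal V$ is admissible with unipotent monodromy. Applying the one-variable case of Theorem~\ref{thm:rl-0} to this pure variation gives $F(z(m))(\gr^W_k)\to\Phi_k$, where $\Phi_k$ is the reduced limit filtration \eqref{eq:naive-2} attached to the pure limiting data on $\gr^W_k$. Since $x(m)$ is convergent by hypothesis, it then suffices to show that $e^{-x\,\gr^W_k(N)}$ preserves $\Phi_k$ for every real $x$.

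Finally, the $\gr^W_k(N)$-invariance of $\Phi_k$ is immediate from the description $\Phi_k^p=\bigoplus_{s\leq k-p}I^{r,s}_{(\hat F_k,W(\gr^W_k(N))[-k])}$ combined with the fact that $\gr^W_k(N)$ is a $(-1,-1)$-morphism of the associated pure limit mixed Hodge structure on $\gr^W_k$: it sends $I^{r,s}$ into $I^{r-1,s-1}$, and $s\leq k-p$ forces $s-1\leq k-p$, so $\gr^W_k(N)\Phi_k^p\subseteq\Phi_k^p$. Consequently $e^{-x(m)\gr^W_k(N)}$ fixes $\Phi_k$ for every $m$, and passing to the limit yields
$$
   \lim_{m\to\infty}\tilde F(z(m))(\gr^W_k) \;=\; \Phi_k \;=\; \lim_{m\to\infty} F(z(m))(\gr^W_k).
$$
The only point requiring genuine verification is the stability of $\Phi_k$ under $\gr^W_k(N)$; the remaining steps are either formal consequences of the definition of $\epsilon$ or direct appeals to Theorem~\ref{thm:rl-0}, so I do not expect an essential obstacle.
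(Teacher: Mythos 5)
Your proposal is correct and follows essentially the same route as the paper: reduce to the induced pure period maps on $\gr^W_k$ (where $\hat F$ and $F$ agree because $\epsilon$ strictly lowers $W$), invoke the pure-case convergence $F(z(m))\gr^W_k\to\Phi_k$, and use the $N$-invariance of $\Phi_k$ together with the convergence of $x(m)$ to absorb the factor $e^{-x(m)N}$. The only difference is that you verify the $N$-invariance of $\Phi_k$ explicitly from the Deligne bigrading, where the paper simply cites \eqref{eq:naive-2}.
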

\begin{proof} Since $\hat F(z)$ and $F(z)$ induce the same filtration on $Gr^W$, the assertion reduces to 
the statement that for a variation of pure Hodge structure
$$
         \lim_{m\to\infty}\, e^{-x(m)N}F(z(m))
$$
converges.  But, equation \eqref{eq:naive-2} implies that $\Phi = \lim_{m\to\infty}\, F(z(m))$ is fixed by $N$. 
Therefore, since $x(m)$ is convergent, $e^{-x(m)N}F(z(m))\to\Phi$ as well.
\end{proof}

\par In particular, since both $\tilde Y(z(m))$ and $\tilde F(z(m))Gr^W$ converge, it follows that $\tilde F(z(m))$. 
converges.  To identify this limiting filtration, let $W^1 = M(N,W^0)$ and $(F_{\infty},W^1)$ denote the limit mixed Hodge structure of $\mathcal V$.  Let 
\begin{equation}
            (\hat F_{\infty},W^1) = (e^{-\epsilon}.F_{\infty},W^1)						\label{eq:split-lmhs}
\end{equation}
be the splitting \eqref{eq:canonical-splitting} of $(F_{\infty},W^1)$.  

\begin{lem} $Y^0$ preserves the Deligne bigrading of  $(\hat F_{\infty},W^1)$.  The
splitting \eqref{eq:canonical-splitting} $(F_{\infty}Gr^W_k,W^1 Gr^W_k)$ is $(\hat F_{\infty}Gr^W_k, W^1 Gr^W_k)$ 
\end{lem}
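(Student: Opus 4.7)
The plan is to reduce both assertions to (i) the compatibility of $Y^0$ with the several-variable $SL_2$-data built into the Deligne system, and (ii) the functoriality of Deligne's $\delta$-splitting and the derived $sl_2$-splitting $\epsilon$ under morphisms of mixed Hodge structures.

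For the first claim, I would begin by recalling that the Deligne systems construction of~\cite{BP1} produces $Y^0 \in \mathfrak{g}_{\mathbb R}$ as a real semisimple grading of $W^0$ that commutes with each $N_j$ and each $H_j$; this compatibility is precisely what makes the product \eqref{eq:t-y} well-defined. In particular, $Y^0$ commutes with $N = \sum_j N_j$ and with $H_{(r)} = \sum_j H_j$, so $Y^0$ preserves $W^1 = M(N,W^0)$ and respects the $H_{(r)}$-splitting of $W^1$. Because $(\hat F_\infty, W^1)$ is $\mathbb R$-split, its Deligne bigrading $\{I^{p,q}_{(\hat F_\infty, W^1)}\}$ is recovered from the pure Hodge decompositions on $Gr^{W^1}$ by lifting through this $H_{(r)}$-splitting. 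The induced action of $Y^0$ on each $Gr^{W^1}_k$ is a morphism of the polarized pure Hodge structure of type $(0,0)$, since $Y^0 \in \mathfrak{g}_{\mathbb R}$ commutes with the $sl_2$-data; hence $Y^0$ preserves the Hodge decomposition on each $Gr^{W^1}_k$, and lifting yields $Y^0(I^{p,q}_{(\hat F_\infty, W^1)}) \subseteq I^{p,q}_{(\hat F_\infty, W^1)}$.

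For the second claim, I would invoke functoriality. By admissibility of $\mathcal V$, the filtration $W = W^0$ is a family of sub-MHS of $(V, F_\infty, W^1)$, so each $W_\ell \otimes \mathbb C$ is graded by the Deligne bigrading $\{I^{p,q}_{(F_\infty, W^1)}\}$. It follows that $\delta = \delta(F_\infty, W^1)$ and each of its Hodge components $\delta^{p,q}$ preserve $W^0$, and by the universal Lie polynomial construction (cf.\ Remark~\eqref{rem:splitting}) so does $\epsilon$. Consequently $\hat F_\infty = e^{-\epsilon}F_\infty$ induces a well-defined filtration on $Gr^W_k$ equal to $e^{-\bar\epsilon_k}F_\infty Gr^W_k$, where $\bar\epsilon_k$ denotes the induced endomorphism of $Gr^W_k$. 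To identify this with the canonical $sl_2$-splitting of the induced MHS, I would apply functoriality of $\delta$ to the surjective morphism of MHS $V \twoheadrightarrow Gr^W_k$ to conclude $\bar\delta_k = \delta(F_\infty Gr^W_k, W^1 Gr^W_k)$; the same Lie polynomial identity then yields $\bar\epsilon_k = \epsilon(F_\infty Gr^W_k, W^1 Gr^W_k)$, so $\hat F_\infty Gr^W_k$ is precisely the $sl_2$-splitting of $(F_\infty Gr^W_k, W^1 Gr^W_k)$.

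The main obstacle will be verifying the admissibility-based compatibility: that $W^0$ is graded by the bigrading of $(F_\infty, W^1)$, and consequently that $\delta$ preserves $W^0$. This is standard but delicate, since it intertwines the two filtrations $W^0 \subsetneq W^1$. Once this is in hand, both assertions follow formally from the functorial, universal Lie polynomial description of $\epsilon$.
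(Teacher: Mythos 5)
Your treatment of the second assertion is sound in outline: the projection $V\twoheadrightarrow Gr^W_k$ is a morphism of mixed Hodge structures (admissibility makes $W^0$ a filtration by sub-mixed Hodge structures of $(F_\infty,W^1)$), $\delta$ is functorial, and $\epsilon$ is given by universal Lie polynomials in the Hodge components of $\delta$, so the splitting commutes with passage to $Gr^W_k$. This is essentially the content of Corollary $(5.9)$ of \cite{BP1}, which is what the paper's one-line proof cites.

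The first assertion is where the real content lies, and your argument for it has two genuine problems. First, the premise that $Y^0$ commutes with each $N_j$ is false; the product \eqref{eq:t-y} only requires $[Y_0,H_j]=0$. Indeed, Example \eqref{exm:non-inv} of this paper exhibits $N=N_0+N_{-2}+\cdots$ with $[Y^0,N_{-j}]=-jN_{-j}$ and $N_{-2}\neq 0$, and \S 6.5 notes that for a normal function which is singular in the sense of Green--Griffiths there is \emph{no} grading of $W$ commuting with every $N_j$. What is true, and what yields preservation of $W^1$, is that $Y^0$ commutes with the grading $Y_{(\hat F_\infty,W^1)}$ of the relative weight filtration. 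Second, the step asserting that $Y^0$ acts as a $(0,0)$-morphism on each $Gr^{W^1}_k$ ``because it commutes with the $sl_2$-data'' is a restatement of the claim rather than a proof: a real element of $\mathfrak g$ commuting with $H$ need not preserve the Hodge decomposition, since the centralizer of $H$ is much larger than $\mathfrak g^{0,0}$, and commuting with $Y_{(\hat F_\infty,W^1)}$ only gives preservation of $\bigoplus_{p+q=k}I^{p,q}$, not of each $I^{p,q}$. The cited Lemma $(5.7)$ of \cite{BP1} gets this from the uniqueness characterization of Deligne's grading $Y^0=Y(N,Y_{(\hat F_\infty,W^1)})$ together with the fact that the input data $N$ and $Y_{(\hat F_\infty,W^1)}$ are of pure Hodge type $(-1,-1)$ and $(0,0)$ for $(\hat F_\infty,W^1)$, so that the $(0,0)$-component of $Y^0$ satisfies the same defining conditions and uniqueness forces $Y^0$ to coincide with it. Some argument of this kind is needed; your proposal does not supply one.
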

\begin{proof} This is a special case of Lemma $(5.7)$ and Corollary $(5.9)$ of \cite{BP1}.
\end{proof}

\par By \eqref{eq:same-lim}, $\tilde F(z(m))Gr^W_k \to \Phi_k$ where $\Phi_k$ is the filtration defined by 
\eqref{eq:naive-2} and the splitting of the limit mixed Hodge structure of $\mathcal V$ on $Gr^W_k$.  By the
previous lemma, this splitting is $(\hat F_{\infty} Gr^W_k, W^1 Gr^W_k)$. Combining the above, we obtain:

\begin{thm} Let $x(m)\to x(\infty)$.  Then,
\begin{equation}
          \tilde F^p(z(m)) \to \Phi :=\bigoplus_{k,s\leq k-p}\, I^{r,s}_{(\hat F_{\infty},W^1)}\cap E_k(Y^0)
                                                                                       \label{eq:avmhs-rlim}
\end{equation}
where $E_k(Y^0)$ is the $k$-eigenspace of $Y^0$, and hence
\begin{equation}
        \hat F(z(m)) = e^{x(m)N}\tilde F(z(m))\to e^{x(\infty)N}.\Phi                   \label{eq:avmhs-rlim-2}
\end{equation}
\end{thm}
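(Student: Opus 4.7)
The plan is to reconstruct $\tilde F(z)$ from the pair consisting of its grading $\tilde Y(z)$ of $W$ and the induced filtration $\tilde F(z)Gr^W$, then pass to the limit. More precisely, given any grading $Y$ of $W$ and a compatible filtration $F_\bullet$ on $Gr^W$, one obtains a filtration $F$ on $V$ by letting $V_k=E_k(Y)$ be the $k$-eigenspace (canonically identified with $Gr^W_k$) and setting $F^p=\bigoplus_k (F_k^p\text{ viewed in }V_k)$. This reconstruction is continuous in both inputs, because a converging sequence of semisimple endomorphisms with integer spectrum in a bounded range has eigenspace projectors that converge in operator norm. Combining the two previously established lemmata in this subsection — $\tilde Y(z(m))\to Y^0$ via \eqref{eq:thm2.30}, and $\tilde F(z(m))Gr^W\to\Phi_{gr}$ via \eqref{eq:same-lim} — therefore yields convergence of $\tilde F(z(m))$ to the filtration assembled from $Y^0$ and $\Phi_{gr}$.

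To identify this assembled filtration with the expression given for $\Phi$, I would apply \eqref{eq:naive-2} componentwise: on each $Gr^W_k$, the variation $\mathcal V Gr^W_k$ is a pure VHS of weight $k$, and its reduced limit is
$$\Phi_k^p=\bigoplus_{s\le k-p}\, I^{r,s}_{(\hat F_\infty Gr^W_k,\, W^1 Gr^W_k)}.$$
Here the lemma just preceding the theorem is essential: it asserts that the $\sl_2$-splitting of $(F_\infty Gr^W_k,W^1 Gr^W_k)$ really is $(\hat F_\infty Gr^W_k,W^1 Gr^W_k)$, and that $Y^0$ preserves the Deligne bigrading of $(\hat F_\infty,W^1)$. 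Under the identification $Gr^W_k\cong E_k(Y^0)$ supplied by the grading $Y^0$, the bigrading of $(\hat F_\infty Gr^W_k,W^1 Gr^W_k)$ is just the restriction to $E_k(Y^0)$ of the bigrading of $(\hat F_\infty,W^1)$, so
$$I^{r,s}_{(\hat F_\infty Gr^W_k,\, W^1 Gr^W_k)}\;=\;I^{r,s}_{(\hat F_\infty,W^1)}\cap E_k(Y^0).$$
Taking the direct sum over $k$ and $s\le k-p$ then gives exactly the claimed formula for $\Phi^p$.

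For the second statement, $\hat F(z(m))=e^{x(m)N}\tilde F(z(m))$ by definition of $\tilde F$, and since $x(m)\to x(\infty)$ while $\tilde F(z(m))\to\Phi$, continuity of the $G_\CC$-action on the flag variety $\check{\mathcal M}$ yields $\hat F(z(m))\to e^{x(\infty)N}.\Phi$, which is \eqref{eq:avmhs-rlim-2}.

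The main obstacle is not computational but conceptual: one must be confident that the reconstruction map $(Y,F|_{Gr^W})\mapsto F$ is continuous and that eigenspace projectors behave well in the limit. The subtle point is that $\tilde Y(z(m))$ are gradings of the \emph{same} filtration $W$ with the same integer spectrum, so the eigenspace decomposition varies continuously; this is really the content of \eqref{eq:thm2.30}. Once this continuity is in hand, both halves of the theorem follow from straightforward bookkeeping with the Deligne bigrading and the cited preceding lemma.
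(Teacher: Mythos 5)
Your proposal is correct and follows essentially the same route as the paper: the paper likewise deduces convergence of $\tilde F(z(m))$ from the convergence of $\tilde Y(z(m))$ to $Y^0$ and of $\tilde F(z(m))Gr^W$, and then identifies the limit componentwise via \eqref{eq:naive-2} and the lemma stating that the splitting of $(F_{\infty}Gr^W_k,W^1Gr^W_k)$ is $(\hat F_{\infty}Gr^W_k,W^1Gr^W_k)$ with $Y^0$ preserving the bigrading. Your explicit justification of the continuity of the reconstruction map $(Y,FGr^W)\mapsto F$ is a detail the paper leaves implicit, but the argument is the same.
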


\par In the pure case, $N$ preserves $\Phi$. The following example shows that this is not true in the mixed case,
and hence \eqref{eq:avmhs-rlim-2} can depend on $x(\infty)$.

\begin{exm}\label{exm:non-inv} Let $\theta(z) = (e^{zN}.F,W)$ be an admissible nilpotent orbit with limit mixed 
Hodge structure split over $\mathbb R$.  Then, by a result of Deligne (cf. \cite{BP1}), 
$$
           \hat\theta(iy) = e^{iyN_0}.F
$$
is the splitting \eqref{eq:canonical-splitting} where 
$$
           N = N_0 + N_{-2} + \cdots,\qquad [Y^0,N_{-j}] = -j N_{-j}
$$
Let $e^{zN}.F$ be the admissible nilpotent orbit with underlying real vector space $V_{\mathbb R}$
spanned by $e_0,\dots,e_3$ and limit mixed Hodge structure $(F,W)$ determined by
$$
     I^{0,0} = \mathbb C e_0\oplus\mathbb C e_1,\qquad
     I^{-1,-1} = \mathbb C e_2,\qquad I^{-2,-2} = \mathbb C e_3
$$
with weight filtration $W_0 = V_{\mathbb C}$, $W_{-1} = W_{-2} = \bigoplus_{j>0}\, \mathbb C e_j$, $W_{-3} = 0$,
and monodromy logarithm
$$
     N(e_0) = e_2,\qquad N(e_1) = e_2,\qquad N(e_2) = e_3, \qquad N(e_3) = 0
$$
Then, $E_0(Y^0) = \mathbb C e_0$, $E_{-2}(Y^0) = W_{-2}$.  By \eqref{eq:avmhs-rlim}
$$
       \Phi^0 = (\bigoplus_{s\leq 0}\, I^{r,s}\cap E_0(Y^0))\oplus (\bigoplus_{s\leq -2}\, I^{r,s}\cap E_{-2}(Y^0))
              = \mathbb C e_0\oplus\mathbb C e_3
$$
which is not invariant under $N$.      
\end{exm}

\begin{rem} Example \eqref{exm:non-inv} also shows that 
$$
     \lim_{y\to\infty}\, e^{iyN}F^0 = \mathbb C e_2\oplus\mathbb C e_3
$$
which induces the trivial filtration on $Gr^W_0$ and hence does not belong to $\check{\mathcal M}$.
\end{rem}


\begin{thm} Let $\mathcal V\to\Delta^*$ be an admissible normal function with unipotent monodromy.  Then, 
$\hat F(z) = F(z)$ due to the short length of $W$, and $F(z(m))\to\Phi$ for any sequence $z(m)$ such that
$y(m)\to\infty$ and $x(m)$ is bounded.
\end{thm}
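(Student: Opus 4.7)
The first assertion, $\hat F(z)=F(z)$, follows immediately from the short length of $W$. For a normal function, $W_{-2}=0$ and $W_0=V$, so any endomorphism in $\mathfrak g^{r,s}_{(F,W)}$ with $r,s<0$ shifts $W$ by $r+s\leq -2$ and therefore sends $W_0=V$ into $W_{-2}=0$, forcing it to vanish. Hence $\Lambda^{-1,-1}=0$, which contains $\epsilon(F,W)$ by definition; thus $\epsilon(F,W)=0$ and $\hat F = e^{-\epsilon}.F = F$.

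For the convergence, I would combine this identity with the theorem for admissible variations with unipotent monodromy established earlier in this subsection: for any subsequence along which $x(m)\to x(\infty)$,
\[
F(z(m)) = \hat F(z(m)) \longrightarrow e^{x(\infty)N}.\Phi,
\]
where $\Phi$ is the filtration \eqref{eq:avmhs-rlim}. Since $\{x(m)\}$ is bounded, every subsequence admits a further subsequence along which $x(m)$ converges. Convergence of the full sequence to a single filtration $\Phi$ therefore reduces to showing that $N$ preserves $\Phi$, so that $e^{x(\infty)N}.\Phi = \Phi$ for every $x(\infty)$.

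To establish $N.\Phi\subseteq\Phi$, I would use the short length of $W$ a second time: the grading $Y^0$ of $W^0$ has eigenvalues only $0$ and $-1$, with $E_{-1}(Y^0)=W^0_{-1}=\mathcal{H}$. Decompose $N=N_0+N_{-1}$ according to the $Y^0$-grading, where $N_0$ preserves the grading and $N_{-1}\colon E_0(Y^0)\to E_{-1}(Y^0)$ lowers it by $1$; the $E_0$-component of $N_0$ vanishes because $N$ acts as $0$ on $Gr^{W^0}_0$, so $N_0$ reduces to the pure monodromy of the VHS on $\mathcal{H}$, which preserves the $E_{-1}$-contribution of $\Phi^p$ by the argument of \S 6.1. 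The off-diagonal part $N_{-1}$ maps the $E_0$-summand of $\Phi^p$ (defined by $s\leq -p$) into the $E_{-1}$-summand with the shifted index $s-1\leq -1-p$, exactly the cutoff for the $E_{-1}$-summand of $\Phi^p$. The non-split correction $\epsilon_\infty \in \Lambda^{-1,-1}_{(F_\infty,W^1)}$ preserves $\Phi$ by the Remark following \eqref{eq:naive-2}, so the general case reduces to the split one.

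The main obstacle is controlling the interaction between the Deligne bigrading of $(\hat F_\infty,W^1)$, with respect to which $N$ has type $(-1,-1)$ up to corrections of lower bidegree coming from $\epsilon_\infty$, and the $Y^0$-grading of $W^0$, which does not a priori commute with $N$; this is precisely where the restricted eigenvalues of $Y^0$ (available only for length-two $W$) block the bad terms seen in Example \eqref{exm:non-inv}. Once $N.\Phi\subseteq\Phi$ is in hand, the limit $e^{x(\infty)N}.\Phi$ coincides with $\Phi$ for every $x(\infty)$, and the standard subsequence argument using boundedness of $\{x(m)\}$ yields $F(z(m))\to\Phi$ for the full sequence.
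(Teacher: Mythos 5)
Your proposal is correct, but it takes a genuinely different route from the paper's. The paper's proof is a two-line reduction to Theorem $(3.9)$ of \cite{BP}: for an admissible normal function the Deligne grading $Y_{(F(z),W)}$ has a well-defined limit commuting with $N$, and since the induced filtrations $F(z)Gr^W$ converge (trivially on $Gr^W_0$, by the pure theory of \S 6.4 on $Gr^W_{-1}$), the filtration $F(z)$ — which is reconstructed from its grading together with its graded pieces — converges. You instead invoke the general theorem \eqref{eq:avmhs-rlim-2} of this subsection, namely $\hat F(z(m))\to e^{x(\infty)N}.\Phi$ along subsequences with convergent real part, and then prove directly that $N$ preserves $\Phi$ when $W$ has length two, so the limit is independent of $x(\infty)$ and a standard subsequence argument (using boundedness of $x(m)$ and compactness of the flag variety) finishes. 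Your verification of $N.\Phi\subseteq\Phi$ holds up: since $Y^0$ preserves the Deligne bigrading of $(\hat F_\infty,W^1)$ (the lemma preceding \eqref{eq:avmhs-rlim}) and $N\in\bigoplus_{a,b\leq -1}\mathfrak g^{a,b}_{(\hat F_\infty,W^1)}$ (it is a $(-1,-1)$-morphism of $(F_\infty,W^1)$ and conjugation by $e^{\epsilon_\infty}$ only adds components with both indices $\leq -1$), each $Y^0$-homogeneous piece of $N$ lowers the second Deligne index by at least one; the diagonal piece kills $E_0(Y^0)\cong Gr^W_0$ and acts on $E_{-1}(Y^0)$ as the pure monodromy, which fixes the pure reduced limit, while the off-diagonal piece carries the cutoff $s\leq -p$ into $s\leq -1-p$, exactly the $E_{-1}$-summand of $\Phi^p$. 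The trade-off is that the paper outsources the analytic input to \cite{BP}, whereas your argument stays within \S 6.5 at the cost of the bigrading bookkeeping; both make transparent why the statement fails for longer weight filtrations, as in Example \eqref{exm:non-inv}. Your derivation of $\hat F(z)=F(z)$ from $\Lambda^{-1,-1}=0$ is exactly the intended reading of ``short length of $W$.''
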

\begin{proof} The fact that $Y_{(F(z),W)}$ has a well defined limit which commutes with $N$ in the case where
$(F(z),W)$ arises from an admissible normal function is Theorem $(3.9)$ of \cite{BP}.  Since $F(z)Gr^W$ exists,
this shows that $\lim_{\text{Im}(z)\to\infty}\, F(z)$ exists.
\end{proof}

\par In order to obtain reduced limits for admissible normal functions in several variables, it would be
sufficient to know that $Y_{(F(z_1,\dots,z_r),W)}$ converged to a grading of $W$ which belongs to the kernel
of each $\ad N_j$.  If $(N_1,\dots,N_r)$ are the monodromy logarithms of a normal function which is singular
in the sense of Green and Griffiths, there is no grading of $W$ which belongs to the kernel of each $\ad N_j$.
We conclude with a simple example of a normal function which is singular in this sense, and show that the 
limit depends on the path to infinity:

\begin{exm}\label{exm:non-conv} Let $F(z_1,z_2)$ be the nilpotent orbit with underlying real vector space 
$V_{\mathbb R}$ spanned by $e_0,e_1,e_2$, limit mixed Hodge structure determined by the bigrading 
$$
   I^{0,0} = \mathbb C e_1\oplus\mathbb C e_2,\qquad I^{-1,-1} = \mathbb C e_2
$$
with weight filtration $W_0 = V_{\mathbb C}$ and $W_{-1} = \mathbb C e_1\oplus\mathbb C e_2$, and monodromy 
logarithms
$$
\aligned
      N_1(e_0) &= e_2,\qquad N_1(e_1) = e_2,\qquad N_1(e_2) = 0 \\
      N_2(e_0) &= -e_2,\qquad N_2(e_1) = e_2,\qquad N_2(e_2) = 0 
\endaligned
$$
Then, 
$$
      e^{iy(N_1 + N_2)}.F_{\infty}^0 = \mathbb C e_0 \oplus\mathbb C(e_1 + 2iy e_2)
$$
which converges to $\mathbb C e_0\oplus\mathbb C e_2$ as $y\to\infty$.  On the other hand,
$$
      e^{iy^2 N_1 + iy N_2}.F_{\infty} = \mathbb C (e_0 + iy(y-1)e_2)\oplus\mathbb C(e_1 + iy(y+1)e_2)
$$
To calculate the limit as $y\to\infty$, observe that
$$
      (y+1)(e_0 + iy(y-1) e_2) - (y-1)(e_1 + iy(y+1)e_2) = (y+1)e_0 - (y-1)e_1
$$
and hence 
$$
       e^{iy^2 N_1 + iy N_2}.F_{\infty} \to \mathbb C (e_0 -e_1)\oplus\mathbb C e_2
$$
\end{exm}

\end{document}